\documentclass[reqno,11pt]{amsart}
\usepackage[T1]{fontenc}

\usepackage[utf8]{inputenc}
\usepackage[english]{babel}
\usepackage{url}
\usepackage{comment}
\usepackage{xcolor,color}
\usepackage{geometry}
\usepackage[all]{xy}
\usepackage{booktabs}
\usepackage{slashed}
\usepackage{bbm}
\usepackage{cancel}
\usepackage{adjustbox}
\usepackage[inline]{enumitem}
\usepackage{xparse}

\usepackage{tikz}
\usepackage{tikz-cd}

\usepackage{soul}

\usepackage{amsmath}
\usepackage{amssymb,graphicx}
\usepackage{amsthm}
\usepackage{latexsym}
\usepackage{amsfonts}
\usepackage{mathrsfs}

\usepackage{etex}
\usepackage[colorlinks = true, linkcolor = blue, urlcolor = black, citecolor = blue, anchorcolor = blue]{hyperref}
\usepackage{cleveref}

\numberwithin{equation}{section}

\theoremstyle{plain}
\newtheorem{lemma}{Lemma}[section]

\newtheorem{proposition/definition}[lemma]{Proposition/Definition}
\newtheorem{theorem}[lemma]{Theorem}
\newtheorem{corollary}[lemma]{Corollary}

\theoremstyle{definition}
\newtheorem{definition}[lemma]{Definition}
\newtheorem{example}[lemma]{Example}

\theoremstyle{remark}
\newtheorem{remarkInner}[lemma]{Remark}

\NewDocumentEnvironment{remark}{o}{%
  \pushQED{\qed}
  \IfValueTF{#1}{\remarkInner[#1]}{\remarkInner}%
}{%
  \popQED\endremarkInner
}

\newcommand{\calD}{\mathcal{D}}
\newcommand{\calE}{\mathcal{E}}

\newcommand{\calL}{\mathcal{L}}

\newcommand{\calO}{\mathcal{O}}

\newcommand{\calT}{\mathcal{T}}

\newcommand{\bbI}{\mathbb{I}}

\newcommand{\bbR}{\mathbb{R}}

\allowdisplaybreaks
\usepackage{cancel}

\title[Homogeneous K\"ahler Manifolds]{On Homogeneous K\"ahler Manifolds}

\author{Antonio De Nicola}
\address{Dipartimento di Matematica, Universit\`a degli Studi di Salerno, via Giovanni Paolo II n${}^\circ$ 132, 84084 Fisciano (SA), Italy}
\email{\href{mailto:antondenicola@gmail.com}{\underline{\smash{antondenicola@gmail.com}}}}

\author{Fabrizio Pugliese}
\address{Dipartimento di Matematica, Universit\`a degli Studi di Salerno, via Giovanni Paolo II n${}^\circ$ 132, 84084 Fisciano (SA), Italy}
\email{\href{mailto:fpugliese@unisa.it}{\underline{\smash{fpugliese@unisa.it}}}}

\author{Luca Vitagliano}
\address{Dipartimento di Matematica, Universit\`a degli Studi di Salerno, via Giovanni Paolo II n${}^\circ$ 132, 84084 Fisciano (SA), Italy}
\email{\href{mailto:lvitagliano@unisa.it}{\underline{\smash{lvitagliano@unisa.it}}}}

\keywords{}

\makeatletter
\def\l@subsection{\@tocline{2}{0pt}{2.5pc}{3.5pc}{}}
\makeatother

\begin{document}

\begin{abstract}
The cone $M \times \bbR_+$ over a Sasakian manifold $M$ is equipped with a canonical K\"ahler structure with specific homogeneity properties with respect to the $\bbR_+$ coordinate. This K\"ahler structure completely encodes the underlying Sasakian structure. Recently, Grabowski, Grabowska and Mohseni provided a broader conceptual framework for this phenomenon via \emph{homogeneous K\"ahler structures}, i.e.~K\"ahler structures on a principal $\bbR^\times$-bundle $P$ satisfying similar homogeneity properties. This approach successfully extends Sasakian geometry from cooriented contact manifolds (where $P$ is a trivial principal bundle) to non-necessarily coorientable contact structures (where $P$ is non-necessarily trivial). Homogeneous K\"ahler structures are genuinely more general than Sasakian structures and this note precisely characterizes the extent of this generalization. This is achieved through a detailed analysis of all the involved compatibilities in terms of the line bundle tautologically associated to $P$. We also show that modifying the homogeneity condition on the K\"ahler structure allows this framework to encompass co-K\"ahler structures and a natural generalization of those as well. 
\end{abstract}

\maketitle

\tableofcontents


\section*{Introduction}

There exists a dictionary that allows one to translate from Symplectic-related Geometries to Contact-related Geometries (see, e.g., \cite{PSV26} and references therein). Remarkably, this dictionary can also be applied to Complex Geometry providing, as an outcome, a version of the odd dimensional counterpart of Complex Geometry: the so called Almost Contact Geometry \cite{B02}. Even more, this dictionary can be applied to any $G$-structure \cite{TVY20}. The Symplectic-to-Contact Dictionary comes in two versions that we now briefly explain. Let $M$ be a smooth manifold and let $H \subseteq TM$ be a contact distribution on $M$ (i.e.~a maximally non-integrable hyperplane distribution). The quotient bundle $L := TM/H$ is a line bundle, and the canonical projection $\beta \colon TM \to L$ can be seen as an $L$-valued $1$-form on $M$: a generalized version of a contact form. Consider the \emph{Atiyah algebroid} $\mathsf D L \to M$ of $L$, whose sections are derivations of $L$, i.e.~$\bbR$-linear maps $D \colon \Gamma (L) \to \Gamma (L)$ satisfying the following Leibniz rule: $D (f\lambda) = \sigma (D) (f) \lambda + f D \lambda$, for all $f \in C^\infty (M)$ and all $\lambda \in \Gamma (L)$. Here $\sigma (D)$ is a vector field on $M$ uniquely determined by $D$ and sometimes called the \emph{symbol} of $D$. The Atiyah algebroid is a Lie algebroid with bracket given by the commutator of derivations and anchor given by the symbol map $\sigma \colon \mathsf DL \to TM$. This Lie algebroid is equipped with a tautological representation on $L$: the action of a derivation on a section of $L$. Composing $\beta$ with the symbol, we get a $1$-cochain on $\mathsf DL$ with coefficients in its tautological representation $L$: $B = \beta \circ \sigma \colon \mathsf D L \to L$. The de Rham differential of $B$ is a \emph{symplectic Atiyah form}, i.e.~a $2$-cocycle $\Omega$ on $\mathsf D L$ with coefficients in $L$ satisfying an appropriate non-degeneracy property. It can be proved that the assignment $(M, H) \mapsto (L, \Omega)$ establishes an equivalence between the category of contact manifolds and contactomorphisms and the category of line bundles $L$ equipped with a symplectic Atiyah form and line bundle isomorphisms preserving $\Omega$ (see \cite{PSV26} for details). This suggests that, to translate constructions and results from Symplectic to Contact Geometry, we can perform the following substitutions:
\[
\begin{aligned}
C^\infty (M) & \leadsto \Gamma (L) \\
TM & \leadsto \mathsf DL \\
\text{differential forms on $M$} & \leadsto \text{$L$-valued forms on the Atiyah algebroid}
\end{aligned}
\] 
This dictionary has been successfully applied in many different situations (see, e.g., \cite{V15, V18, VW16, VW20, VW20bis, SV20, BTV20, S23, ST23, MTV24}).

There is another version of the dictionary. The line bundle $L$ defines a principal $\bbR^\times$-bundle $\widetilde L$, where $\bbR^\times$ is the multiplicative Lie group of non-zero reals, $\widetilde L = L^\ast \smallsetminus 0$ and the principal action $h \colon \bbR^\times \times \widetilde L \to \widetilde L$ is given by fiberwise scalar multiplication. Denote by $\pi \colon \widetilde L \to M$ the bundle projection. The ``contact form'' $\beta \colon TM \to L$ can be seen as a semibasic $1$-form $\widetilde B$ on $\widetilde L$ by putting $\widetilde B (v) = \langle \varepsilon, \beta (\pi_\ast v) \rangle$ for all $v \in T_\varepsilon \widetilde L$, and $\varepsilon \in \widetilde L$, where $\langle -, -\rangle \colon L^\ast \times_M L \to \bbR$ denotes the duality pairing.

The de Rham differential of $\widetilde B$ is a closed $2$-form on $\widetilde L$. The assignment $(M, H) \mapsto (\widetilde L, \widetilde \Omega)$ establishes an equivalence between the category of contact manifolds and contactomorphisms and the category of homogeneous symplectic manifolds and homogeneous symplectomorphisms. Here, by a \emph{homogeneous symplectic manifold} we mean a principal $\bbR^\times$-bundle $P$, with principal action $h$, and a symplectic form $\widetilde \Omega \in \Omega^2 (P)$ satisfying the following \emph{homogeneity condition}: 
\begin{equation}\label{eq:hs}
h_r^\ast \widetilde \Omega = r\widetilde \Omega
\end{equation}
 for all $r \in \bbR^\times$. Accordingly, a \emph{homogeneous symplectomorphism} is an $\bbR^\times$-equivariant symplectomorphism of homogeneous symplectic manifolds. This suggests that, to translate from Symplectic to Contact Geometry it is enough to pass from manifolds to principal $\bbR^\times$-bundles, aka \emph{homogeneous manifolds}, and implement the appropriate homogeneity condition. This version of the dictionary has been successfully applied to different situations mainly by J.~Grabowski and collaborators (see \cite{G13,GG22,GG23,GGM25} and references therein). Notice that, for structures different from symplectic forms, the appropriate homogeneity condition might look very different from \eqref{eq:hs}. 
 
 The two versions of the Symplectic-to-Contact dictionary discussed so far are actually related. Namely, the assignment $L \mapsto \widetilde L$ establishes an equivalence between the category of line bundles (with fiber-wise invertible line bundle maps) and the category of homogeneous manifolds (with $\bbR^\times$-equivariant maps). Accordingly, natural constructions with line bundles correspond to natural constructions with homogeneous manifolds. For instance, there is a \emph{homogenization isomorphism} $\lambda \mapsto \widetilde \lambda$ mapping bijectively section of $L$ to degree $1$ homogeneous functions on $\widetilde L$, i.e.~functions $f \in C^\infty (\widetilde L)$ such that $h_r^\ast f = r f$ for all $r \in \bbR^\times$. Similarly, there are homogenization isomorphisms
 \[
 \begin{aligned}
 D & \mapsto \widetilde D \\
 \Omega & \mapsto \widetilde \Omega
 \end{aligned}
 \]
 mapping bijectively derivations $D$ of $L$ to degree $0$ homogeneous vector fields $X$ on $\widetilde L$ (i.e., $h^\ast_r X = X$) and $L$-valued cochains $\Omega \colon \wedge^\bullet \mathsf D L \to L$ on the Atiyah algebroid to degree $1$ homogeneous differential forms $\omega$ on $\widetilde L$ (i.e., $h^\ast_r \omega = r \omega$). For more details, see \cite{VW20bis, PSV26}. The symplectic Atiyah form $\Omega$ and the homogeneous symplectic form $\widetilde \Omega$ determined by a contact structure as explained above are actually related by the latter homogenization (and this explains the notation that we have adopted).

Now, let $P \to M$ be a principal $\bbR^\times$-bundle with principal action $h$. In \cite{GGM25} the authors define a \emph{homogeneous K\"ahler structure} on $P$ as a K\"ahler structure $(\widetilde K, \widetilde G, \widetilde \Omega)$ on $P$, with $\widetilde K$ being the complex structure, $\widetilde G$ being the Riemannian metric, and $\widetilde \Omega$ being the K\"ahler form, satisfying the following homogeneity conditions:
\begin{equation}\label{eq:hK0}
\begin{aligned}
h_r^\ast \widetilde K &= \operatorname{sign}(r) \widetilde K\\
h_r^\ast \widetilde G & = |r| \widetilde G \\
h_r^\ast \widetilde \Omega & = r \widetilde \Omega,
\end{aligned}
\end{equation}
for all $r \in \bbR^\times$. The third one of \eqref{eq:hK0} does actually follow from the previous two. These apparently weird equations are actually partly forced by $\widetilde K$ satisfying $\widetilde K{}^2 = -1$ and by $\widetilde G$ being positive definite. Under certain conditions a homogeneous K\"ahler structure on $P$ determines a Sasakian structure on $M$, but the former are genuinely more general than the latter in two directions: 1) The contact structure on $M$ determined by the homogeneous symplectic part $\widetilde \Omega$ of the homogeneous K\"ahler structure need not be co-oriented, not even co-orientable, and 2) $(\widetilde K, \widetilde G, \widetilde \Omega)$ determine an honest Sasakian structure on $M$ only when the orthogonal distribution to the fibers of $P \to M$ is integrable. So the Symplectic-to-Contact dictionary does not strictly apply in Sasakian geometry, in that Sasakian manifolds are \emph{not} equivalent to homogeneous K\"ahler manifolds. The emphasis in \cite{GGM25} is on aspect 1), while the emphasis in this note is on aspect 2). More precisely, in the present note
\begin{enumerate}
\item we characterize the exact extent to which homogeneous K\"ahler structures are more general than Sasakian structures,
\item using the equivalence between line bundles and homogeneous manifolds, we re-interpret the results in \cite{GGM25} in terms of line bundles.
\end{enumerate}
We do this studying systematically all the compatibilities between the K\"ahler tensors $\widetilde K, \widetilde G, \widetilde \Omega$ in terms of line bundles. In this way, 
\begin{enumerate}
\item[(3)] we also apply the Symplectic-to-Contact dictionary to (partially) non-integrable versions of K\"ahler Geometry, namely Almost Hermitian, Hermitian, and Almost K\"ahler Geometries.
\end{enumerate}
Finally, Equations \eqref{eq:hK0} are not the only possible homogeneity conditions that one can impose on a K\"ahler structure (more generally an almost Hermitian structure) on a homogeneous manifold. So
\begin{enumerate}
\item[(4)] we study all possible homogeneity conditions on an almost Hermitian structure on a homogeneous manifold and show that there are essentially only three inequivalent ones: the first one is related to Sasakian Geometry as in \cite{GGM25}, the second one is related to co-K\"ahler geometry in a similar way, while the third one is just a twisted version of the second (see the appendix).
\end{enumerate}

The paper is divided in three sections and one appendix.

In Section \ref{sec:1} we review the equivalence between line bundles and homogeneous manifolds with an emphasis on calculus on a homogeneous manifold and all its associated line bundles. We conclude this section showing that there are essentially only three meaningful homogeneity conditions that one can impose on an almost Hermitian structure on a homogeneous manifold. We call the first two the \emph{homogeneous} and \emph{invariant} cases.

In Section \ref{sec:2} we study the homogeneous case in details. There is an overlap with \cite{GGM25} here, but our emphasis is on finding a line bundle description of homogeneous almost Hermitian structure by studying separately 1) all the involved algebraic conditions on one side, and 2) all the integrability conditions defining an Hermitian, an almost K\"ahler and a K\"ahler structure respectively. It turns out that homogeneous K\"ahler manifolds are more general than Sasakian manifolds, even locally. We also discuss which precise homogeneous K\"ahler structures are equivalent to a Sasakian structure (on the base). We conclude Section \ref{sec:2} with an example of a homogeneous K\"ahler manifold with trivial associated line bundle not coming from a Sasakian structure on the base.

In Section \ref{sec:3} we study the invariant case. This case is not covered by \cite{GGM25}. Our analysis here closely parallels that in Section \ref{sec:2}. Invariant K\"ahler structures are related to co-K\"ahler structures but are more general than the latter, even locally. We discuss which precise invariant K\"ahler structures are equivalent to a co-K\"ahler structure and provide two examples of an invariant K\"ahler manifold with trivial line bundle not coming from a co-K\"ahler structure in two different ways.

In the appendix we shortly discuss the remaining case.

We assume the reader is familiar with the fundamentals of Lie algebroids. We also assume familiarity with the basic definitions in \emph{Almost Contact Metric Geometry}, including the notions of \emph{normal almost contact metric manifold}, and \emph{contact metric / Sasakian manifold} (resp.~\emph{almost co-K\"ahler / co-K\"ahler manifold}). We only recall here that the latter is an odd dimensional analogue of an Hermitian manifold, and an almost K\"ahler / K\"ahler manifold, respectively (for more details, see \cite{B02}).

All base manifolds $M$ of a principal $\bbR^\times$-bundle $P \to M$ will be assumed to be connected.

\subsection*{Acknowledgments} The authors are members of the GNSAGA of INdAM.

\section{Homogeneous Manifolds and Line Bundles}\label{sec:1}

\subsection{Homogenization}

Let $\bbR^\times$ be the multiplicative group of non-zero reals. We will often interpret $\mathbb R^\times$ as the order $1$ general linear group $\operatorname{GL}(1,\mathbb R)$. As such, it comes with a tautological $1$-dimensional representation on $\mathbb R$. Let $P\rightarrow M$ be a principal $\mathbb R^\times$-bundle. Denote by $\pi\colon P\rightarrow M$ the bundle projection, and by $h\colon\bbR^\times\times P\rightarrow P,\ (r,p)\mapsto h_r(p)$, the principal $\bbR^\times$-action. In this paper, we will refer to the data $(P \to M, h)$ as a \textbf{homogeneous manifold}. The reason behind this terminology is that we use $h$ to detect \emph{homogeneity properties} of a tensor on $P$ with respect to the $\pi$-fiber coordinate.

The base manifold $M = P/\bbR^\times$ will be sometimes denoted $P/h$ to emphasize the action that we are quotienting out. A \textbf{homogeneous map} is an $\mathbb R^\times$-equivariant map of homogeneous manifolds. Homogeneous manifolds and homogeneous maps form a category. There are obvious mutually quasi-inverse category equivalences between homogeneous manifolds with homogeneous maps and line bundles with fiber-wise invertible vector bundle maps. Under this equivalences a homogeneous manifold $P$ corresponds to the line bundle $(P \times \mathbb R) / \mathbb R^\times \to M$ associated to the tautological representation of $\mathbb R^\times$. Conversely, a line bundle $L \to M$ corresponds to the homogeneous manifold $\widetilde L = L^\ast \smallsetminus 0$, where the principal $\mathbb R^\times$-action is given by fiber-wise scalar multiplication.

We now recall from \cite{VW20, TVY20, PSV26} some implications of the equivalence between line bundles and homogeneous manifolds. We also make a number of easy remarks on line bundles associated to a homogeneous manifold which will be useful in the rest of the paper.

We begin discussing derivations of a (generically higher rank) vector bundle $E \to M$. A \emph{derivation} of $E$ is an $\mathbb R$-linear map $D\colon \Gamma (E) \to \Gamma (E)$ satisfying the following Leibniz rule: for all $e \in \Gamma (E)$ and all $f \in C^\infty (M)$,
\[
D(fe) = f D(e) + \sigma_D (f) e,
\]
where $\sigma_D \in \mathfrak X (M)$ is some (necessarily unique) vector field called the \emph{symbol} of $D$. Derivations of $E \to M$ can be seen as sections of the \textbf{gauge algebroid} (or \textbf{Atiyah algebroid}) $\mathsf{D}E \Rightarrow M$ of $E$, whose Lie bracket is the commutator of derivations and whose anchor is the symbol map $\sigma \colon \mathsf{D}E \to TM$, $D \mapsto \sigma_D$. We denote $\calD (E) = \Gamma (\mathsf DE)$.



We now come to line bundles. So let $L \to M$ be a line bundle. There are at least two (actually several) de Rham-like complexes associated to $L$. Namely, the gauge algebroid $\mathsf D L$ acts tautologically on $L$ and trivially on $\mathbb R_M$. These two representations are encoded by complexes $(\Omega^\bullet_{\mathsf D L} (L), d_{\mathsf DL})$ and $(\Omega^\bullet_{\mathsf{D}L}, d_{\mathsf DL})$ consisting of alternating $C^\infty (M)$-multilinear maps $\calD (L) \times \cdots \times \calD (L) \to \Gamma (L)$ and $\calD (L)\times \cdots \times \calD (L) \to C^\infty (M)$ respectively. Cochains in $\Omega^\bullet_{\mathsf D L} (L)$ will be referred to as \textbf{Atiyah forms} on $L$. 


%

Now, let $(P \to M, h)$ be a homogeneous manifold and let $L \to M$ be the associated line bundle. We denote by $\pi$ both the projections $P \to M$ and $L \to M$, and identify $P$ with $\widetilde L = L^\ast \smallsetminus 0$. The vector bundle $T^{p,q}P \to P$ of $(p,q)$-tensors on $P$ ($p$-times covariant, $q$-times contravariant) is a \emph{homogeneous vector bundle} (i.e.~an equivariant vector bundle over a homogeneous manifold) in several different ways. Namely, for every Lie group homomorphism $f \colon \bbR^\times \to \bbR^\times$, there is a principal action by vector bundle automorphisms covering $h$:
\[
h^f \colon \mathbb R^\times \times T^{p,q}P \to T^{p,q}P, \quad (r, \calT) \mapsto f(r) h_{r\ast} (\calT).
\]

\begin{remark}
There are exactly two families of Lie group homomorphisms $f \colon \bbR^\times \to \bbR^\times$ both parameterized by a real number $\alpha$, namely
\[
f(r) = |r|^\alpha \quad \text{and} \quad f(r) = \operatorname{sign}(r) |r|^\alpha.
\]
\end{remark}

Homogeneous (i.e.~$\mathbb R^\times$ equivariant) sections of $T^{p,q}P \to P$ with respect to $h^f$ will be also called \emph{$f$-homogeneous}. When $f(r) = r^k$ for some integer $k$, $f$-homogeneous tensors are also called \emph{degree $k$ homogeneous}, or simply \emph{$k$-homogeneous}. In other words a $(p,q)$-tensor field $\mathcal T \in \Gamma (T^{p,q} P)$ is $k$-homogeneous if $h_r^\ast (\mathcal T) = r^k \mathcal T$ for all $r \in \mathbb R^\times$.

Denote by $\mathsf D^\ast L$ the dual vector bundle of $\mathsf DL$. Then, the quotient vector bundle $T^{p,q} P/ h^f$ is $f(L) \otimes T^{p,q}_{\mathsf DL}$ where $T^{p,q}_{\mathsf DL} := (\mathsf DL)^{\otimes q} \otimes (\mathsf D^\ast L)^{\otimes p}$. Here $f(L)$ is the line bundle associated to the representation of $\mathbb R^\times$ on $\bbR$ encoded by $f$ (equivalently, it is the line bundle whose transition maps are obtained from those of $L$ by postcomposing with $f$). Accordingly, we have a pull-back diagram 
\begin{equation*}
	\begin{tikzcd}
		T^{p,q}P\arrow[rr]\arrow[d]&&f(L) \otimes T^{p,q}_{\mathsf DL} \arrow[d]\\
		P\arrow[rr, "\pi"]&&M
	\end{tikzcd},
\end{equation*}
so that pull-back sections are precisely $f$-homogeneous tensor fields. For every section $T$ of  $f(L) \otimes T^{p,q}_{\mathsf DL} \to M$, we also denote by $\widetilde T = \pi^\ast T$ the corresponding $f$-homogeneous tensor field and call it the \textbf{homogenization} of $T$. Summarizing, the assignment $T \mapsto \widetilde T$ establishes a bijection between sections of $f(L) \otimes T^{p,q}_{\mathsf DL} \to M$ and $f$-homogeneous tensor fields on $P$, which we will use throughout the paper.

\begin{remark}
In the following list we specialize the latter remarks to some situations of interest for the purposes of this paper:

\begin{itemize}
\item[\checkmark] When $p,q, k = 0$, $T = f$ is a function on $M$, $\widetilde f = \pi^\ast f$ is a basic function on $P$ and the homogenization $f \mapsto \widetilde f$ identifies smooth functions on $M$ and $0$-homogeneous functions on $P$.

\item[\checkmark] When $p,q = 0$ and $k = 1$, $T = \lambda$ is a section of $L$, $\widetilde \lambda$ is given by $\widetilde \lambda (p) = \langle p, \lambda_x \rangle$, for all $p \in P_x = L^\ast_x \smallsetminus 0$, $x \in M$, and the homogenization $\lambda \mapsto \widetilde \lambda$ identifies sections of $L$ and $1$-homogeneous functions on $P$.


\item[\checkmark] When $p = 0, q = 1$ and $k = 0$, $T = D$ is a derivation of $L$, and the homogenization $D \mapsto \widetilde D$ identifies derivations of $L$ and $0$-homogeneous vector fields on $P$.

\item[\checkmark] When $q = 0, k = 0$ and $T = \Omega$ is skewsymmetric, then $\Omega \in \Omega^p_{\mathsf DL}$, $\widetilde \Omega \in \Omega^p (P)$, and the homogenization $\Omega \mapsto \widetilde \Omega$ identifies alternating $p$-forms on $\mathsf D L$ and $0$-homogeneous differential $p$-forms on $P$.

\item[\checkmark] When $q = 0, k = 1$ and $T = \Omega$ is skewsymmetric, then $\Omega \in \Omega^p_{\mathsf DL} (L)$, $\widetilde \Omega \in \Omega^p (P)$, and the homogenization $\Omega \mapsto \widetilde \Omega$ identifies Atiyah $p$-forms on $L$ and $1$-homogeneous differential $p$-forms on $P$. 
\end{itemize}
\end{remark}

\begin{example}
The homogenization of the identity derivation $\bbI \in \calD (L)$ is the (restriction to $\widetilde L = L^\ast \smallsetminus 0$ of) the Euler vector field $\calE$ (on $L^\ast$): $\widetilde \bbI = \calE$. In what follows we will freely use this simple remark.
\end{example}

More cases will appear in the sequel.
\subsection{Associated Line Bundles}\label{subsec:AssLB}

Homogenization is compatible with all natural pointwise operations on tensors. For instance, $\widetilde{\Omega(D_1, \ldots, D_p)} = \widetilde \Omega (\widetilde D_1, \ldots, \widetilde D_p)$ for all $\Omega \in \Omega^p_{\mathsf{D} L}$ (resp.~$\Omega \in \Omega^p_{\mathsf{D} L} (L)$), and $D_1, \ldots, D_p \in \mathcal D (L)$. Homogenization is also compatible with all natural differential operators on tensors. For instance, $\widetilde{[D_1, D_2]} = [\widetilde D_1, \widetilde D_2]$ for all $D_1,D_2 \in \calD (L)$. Similarly, $\widetilde{d_{\mathsf D L} \Omega} = d \widetilde \Omega$ for all $\Omega \in \Omega^p_{\mathsf{D} L}$ (resp.~$\Omega \in \Omega^p_{\mathsf{D} L} (L)$). In particular, for any two Lie group homomorphisms $f,f' \colon \bbR^\times \to \bbR^\times$:

\begin{itemize}
\item[\checkmark] there is a unique bilinear map (actually a vector bundle isomorphism)
\[
f(L) \otimes f'(L) \to f\!f' (L), \quad \lambda \otimes \lambda' \mapsto \lambda \lambda'
\]
such that $\widetilde{\lambda \lambda'} = \widetilde \lambda \, \widetilde \lambda'$;
\item[\checkmark] for all $p,q$, there is a unique $\bbR$-bilinear map (also called \textbf{Lie derivative})
\[
\Gamma \Big(f(L) \otimes \mathsf DL\Big) \times \Gamma \Big( f'(L) \otimes T^{p,q}_{\mathsf DL} \Big) \to \Gamma \Big( f\!f'(L) \otimes T^{p,q}_{\mathsf DL}\Big), \quad (D, \calT) \mapsto \calL_D \calT
\]
such that $\widetilde{\calL_D \calT} = \calL_{\widetilde D} \widetilde \calT$.
\end{itemize}
Here $f\!f'$ is the point-wise multiplication of $f, f'$. From the above, more natural operations arise, satisfying the obvious compatibilities with homogenization. For instance, given $K \in \Gamma \big(f(L) \otimes T^{1,1}_{\mathsf DL}\big)$, one can uniquely define the \textbf{Nijenhuis torsion} $N_K \in \Gamma \big(f^2(L) \otimes T^{2,1}_{\mathsf DL}\big)$ of $K$ so that $\widetilde{N_K}$ is exactly the Nijenhuis torsion of $\widetilde K$, etc. In what follows, we will freely use all these natural operations. If the reader feels uncomfortable with these constructions, they should just think of sections of $f(L) \otimes T^{p,q}_{\mathsf DL}$ as ordinary $(p,q)$ tensor fields on $P = \widetilde L$ satisfying the appropriate homogeneity condition.

\begin{remark}
In what follows we will always implicitly use the natural isomorphism $f(L) \otimes f'(L) \cong f\!f'(L)$ to identify the two line bundles. For instance, we will always identify $L^\ast \otimes L= L^{-1} \otimes L$ and the trivial line bundle $L^0 = \bbR_M$. Similarly, we will identify $\operatorname{sign}(L)^{\otimes 2}$ and $\bbR_M$.
\end{remark}

\begin{remark}\label{rem:|-|,sign}
The line bundles $|L|$ and $\operatorname{sign} (L)$ play a special role in this paper. For this reason, we recall here some of their properties that will be used later on. First of all $\operatorname{sign} (L) \otimes |L| = L$. Moreover $|L|$ is orientable (hence trivial) and canonically oriented (but not canonically trivial). A section $\lambda \in \Gamma (|L|)$ is positive if and only if its homogenization $\widetilde \lambda$ is a positive ($|-|$-homogeneous) function on $\widetilde L$. Finally, $\operatorname{sign} (L)$ possesses a canonical flat connection which we will denote $\nabla^{can}$. A section of $\lambda \in \Gamma (\operatorname{sign} (L))$ is $\nabla^{can}$-parallel if and only if its homogenization $\widetilde \lambda$ is a locally constant ($\operatorname{sign}$-homogeneous) function on $\widetilde L$. In what follows we will denote by $d^{can} : \Omega^\bullet (M, \operatorname{sign}(L)) \to \Omega^{\bullet +1} (M, \operatorname{sign}(L))$ the connection differential corresponding to $\nabla^{can}$. When $L = \bbR_M$ is a trivial line bundle, so is $\operatorname{sign}(L)$ and $d^{can}$ is just the ordinary de Rham differential. 
\end{remark}

\begin{remark}
Let $f \colon \bbR^\times \to \bbR^\times$ be a Lie group homomorphism. A derivation $D$ of $L$ (or, equivalently, a $0$-homogeneous vector field $\widetilde D$ on $\widetilde L$) determines a unique derivation $D^f$ of the associated line bundle $f(L)$ via
\[
\widetilde{D^f \lambda} = \widetilde D \widetilde \lambda, \quad \lambda \in \Gamma \big(f(L)\big).
\]
The assignment $\mathsf D L \to \mathsf D f(L)$ is a canonical flat $\mathsf D L$-connection in $f(L)$. When
\[
\dot f := \frac{df(r)}{dr}|_{r = 1} \neq 0,
\]
i.e. $f(r) = |r|^\alpha, \operatorname{sign}(r)|r|^\alpha$ with $\alpha \neq 0$, then the connection $\mathsf D L \to \mathsf D f(L)$ is actually a Lie algebroid isomorphism that we will often use to identify $\mathsf D L$ and $\mathsf D f(L)$ (together with their representations on associated bundles). In this case we simply write $D$ for $D^f$.

When $\dot f = 0$, i.e. $f(r) = 1, \operatorname{sign} (r)$, then the connection $\mathsf D L \to \mathsf D f(L)$ is not an isomorphism and it  actually factorizes as
\[
\mathsf D L \overset{\sigma}{\longrightarrow}  TM \overset{\nabla^{can}}{\longrightarrow} \mathsf D f(L)
\]
where $\nabla^{can} \colon TM \to \mathsf D f(L)$ is the canonical flat connection in either $\bbR_M$ or $\operatorname{sign} (L)$.
\end{remark}

\begin{remark}\label{rem:L-conn}
A linear connection $\nabla$ in $L$ or, equivalently, a principal connection in $\widetilde L$,  induces a linear connection $\nabla^f$ in every associated (line) bundle $f(L)$. The connection $\nabla^f$ is given by the composition
\[
TM \overset{\nabla}{\longrightarrow} \mathsf D L \longrightarrow \mathsf D f (L).
\]
When $\dot f \neq 0$, the curvatures of $\nabla, \nabla^f$, seen as closed $2$-forms on $M$, agree, and we simply write $\nabla$ for $\nabla^f$. When $\dot f = 0$, then $\nabla^f$ agrees with the canonical flat connection $\nabla^{can}$.
\end{remark}

\subsection{Almost Hermitian Structures on Homogeneous Manifolds}\label{subsec:AHSonHM}

We conclude this section discussing the possible homogeneity conditions for an almost Hermitian structure on a homogeneous manifold $P = \widetilde L$. So let $(\widetilde K, \widetilde G, \widetilde \Omega)$ be an \emph{almost Hermitian structure} on $\widetilde L$, i.e.~$\widetilde K$ is an almost complex structure, $\widetilde G$ is a Riemannian metric, and $\widetilde \Omega$ is a non-degenerate $2$-form on $\widetilde L$ satisfying the usual compatibility
\begin{equation}\label{eq:comp_0}
\widetilde \Omega = \widetilde G(-, \widetilde K -).
\end{equation}
First of all, from $\widetilde K{}^2 = -1$ it follows that $\widetilde K$ can only be $f$-homogeneous for $f(r) = \operatorname{sign}(r)$ or $f(r) = 1$. Similarly, from $\widetilde G$ being positive definite it follows that $\widetilde G$ can only be $f'$-homogeneous for $f'(r) = |r|^\alpha$, $\alpha \in \bbR$. Finally, if $\widetilde K$ is $f$-homogeneous, and $\widetilde G$ is $f'$-homogeneous, then, from \eqref{eq:comp_0}, $\widetilde \Omega$ is necessarily $f\!f'$-homogeneous. We now distinguish the following cases:

\begin{itemize}
\item[] \hspace{-30pt} \textbf{CASE $\alpha$-A:} $f(r) = \operatorname{sign}(r)$ and $f'(r) = |r|^\alpha$, with $\alpha \neq 0$. In this case $\widetilde K, \widetilde G, \widetilde \Omega$ are the homogenizations of
\begin{enumerate}
\item[\checkmark] a section of $\operatorname{sign}(L) \otimes T^{1,1}_{\mathsf D L}$, equivalently a vector bundle map 
\[
K \colon \mathsf DL \to \operatorname{sign}(L) \otimes \mathsf D L,
\]
\item[\checkmark] a section of $|L|^\alpha \otimes T^{2,0}_{\mathsf D L}$, equivalently a $2$-form 
\[
G \colon \mathsf DL \otimes \mathsf DL \to |L|^\alpha,
\]
\item[\checkmark] a section of $\operatorname{sign}(L) \otimes |L|^\alpha \otimes T^{2,0}_{\mathsf D L}$, equivalently a $2$-form 
\[
\Omega \colon \mathsf DL \otimes \mathsf D L \to \operatorname{sign}(L) \otimes |L|^\alpha,
\]
\end{enumerate}
satisfying appropriate additional conditions (coming from $(\widetilde K, \widetilde G, \widetilde \Omega)$ being an almost Hermitian structure). If we put $L' = \operatorname{sign}(L) \otimes |L|^\alpha$, we have
\begin{itemize}
\item[\checkmark] $\mathsf D L' = \mathsf D L$,
\item[\checkmark] $\operatorname{sign} (L') = \operatorname{sign} (L)$,
\item[\checkmark]	$|L'| = |L|^\alpha$,
\end{itemize}
whence we can reinterpret $(K, G, \Omega)$ as vector bundle maps
\[
\begin{aligned}
K' & \colon \mathsf DL' \to \operatorname{sign}(L') \otimes \mathsf D L', \\
G' & \colon \mathsf DL' \otimes \mathsf DL' \to |L'|, \\
\Omega' & \colon \mathsf \mathsf DL' \otimes \mathsf DL' \to L'.
\end{aligned}
\]
It is easy to see that, more precisely, the homogenization $(\widetilde K{}', \widetilde G{}', \widetilde \Omega{}')$ of $(K', G', \Omega')$ is actually an almost Hermitian structure on $\widetilde L{}'$. This shows that CASEs $\alpha$-A are all basically equivalent to CASE $1$-A. In CASE $1$-A, we call $(\widetilde K, \widetilde G, \widetilde \Omega)$ a \textbf{homogeneous almost Hermitian structure}.

\bigskip

\item[] \hspace{-30pt} \textbf{CASE $\alpha$-B:} $f(r) = 1$ and $f'(r) = |r|^\alpha$, with $\alpha \neq 0$. In this case $\widetilde K, \widetilde G, \widetilde \Omega$ are the homogenizations of
\begin{enumerate}
\item[\checkmark] a section of $T^{1,1}_{\mathsf D L}$, equivalently a vector bundle map 
\[
K \colon \mathsf DL \to \mathsf D L,
\]
\item[\checkmark] a section of $|L|^\alpha \otimes T^{2,0}_{\mathsf D L}$, equivalently a $2$-form 
\[
G \colon \mathsf DL \otimes \mathsf DL \to |L|^\alpha,
\]
\item[\checkmark] a section of $|L|^\alpha \otimes T^{2,0}_{\mathsf D L}$, equivalently a $2$-form 
\[
\Omega \colon \mathsf DL \otimes \mathsf D L \to |L|^\alpha,
\]
\end{enumerate}
satisfying appropriate additional conditions. If we put $L' = |L|^\alpha$, we have
\begin{itemize}
\item[\checkmark] $\mathsf D L' = \mathsf D L$,
\item[\checkmark] $\operatorname{sign} (L') = \bbR_M$,
\item[\checkmark]	$|L'| = |L|^\alpha$,
\end{itemize}
whence we can reinterpret $(K, G, \Omega)$ as vector bundle maps
\[
\begin{aligned}
K' & \colon \mathsf DL' \to \operatorname{sign}(L') \otimes \mathsf D L', \\
G' & \colon \mathsf DL' \otimes \mathsf DL' \to |L'|, \\
\Omega' & \colon \mathsf DL' \otimes \mathsf DL' \to L',
\end{aligned}
\]
again. The homogenization $(\widetilde K{}', \widetilde G{}', \widetilde \Omega{}')$ of $(K', G', \Omega')$ is an almost Hermitian structure on $\widetilde L{}'$. This shows that CASEs $\alpha$-B all basically fall into CASEs $\alpha$-A, hence into the \emph{homogeneous case}: CASE $1$-A. Notice however that, in CASE $\alpha$-B, $L'$ is an orientable (hence trivializable) line bundle.

\bigskip

\item[] \hspace{-30pt} \textbf{CASE C:} $f(r) = \operatorname{sign}(r)$ and $f'(r) = 1$. In this case $\widetilde K, \widetilde G, \widetilde \Omega$ are the homogenizations of
\begin{enumerate}
\item a section of $\operatorname{sign}(L) \otimes T^{1,1}_{\mathsf D L}$, equivalently a vector bundle map 
\[
K \colon \mathsf DL \to \operatorname{sign}(L) \otimes \mathsf D L,
\]
\item a section of $T^{2,0}_{\mathsf D L}$, equivalently a $2$-form 
\[
G \colon \mathsf DL \otimes \mathsf DL \to \bbR_M,
\]
\item a section of $\operatorname{sign}(L) \otimes T^{2,0}_{\mathsf D L}$, equivalently a $2$-form 
\[
\Omega \colon \mathsf DL \otimes \mathsf D L \to \operatorname{sign}(L),
\]
\end{enumerate}
satisfying appropriate additional conditions. This case is not very different from the \emph{invariant case}, i.e.~next CASE D, and will be discussed in the appendix.

\bigskip

\item[] \hspace{-30pt}  \textbf{CASE D:} $f(r) = f'(r) = 1$. In this case $\widetilde K, \widetilde G, \widetilde \Omega$ are the homogenizations of
\begin{enumerate}
\item a section of $T^{1,1}_{\mathsf D L}$, equivalently a vector bundle map 
\[
K \colon \mathsf DL \to \mathsf D L,
\]
\item a section of $T^{2,0}_{\mathsf D L}$, equivalently a $2$-form 
\[
G \colon \mathsf DL \otimes \mathsf DL \to \bbR_M,
\]
\item a section of $T^{2,0}_{\mathsf D L}$, equivalently a $2$-form 
\[
\Omega \colon \mathsf DL \otimes \mathsf D L \to \bbR_M,
\]
\end{enumerate}
satisfying appropriate additional conditions. In CASE D, we call $(\widetilde K, \widetilde G, \widetilde \Omega)$ an \textbf{invariant almost Hermitian structure}.
\end{itemize}

The homogeneous and invariant cases are discussed in details in the next two sections.

\section{Homogeneous K\"ahler Manifolds}\label{sec:2}

\subsection{Homogeneous Almost Hermitian Structures}\label{subsec:2A}
Let $L \to M$ be a line bundle and let $(\widetilde L \to M, h)$ be the associated homogeneous manifold. A \textbf{homogeneous almost Hermitian structure} on $\widetilde L$ is an almost Hermitian structure $(\widetilde K, \widetilde G, \widetilde \Omega)$ on $\widetilde L$ satisfying the following homogeneity conditions (CASE $1$-A in Subsection \ref{subsec:AHSonHM}):
\begin{enumerate}
\item $h_r^\ast \widetilde K = \operatorname{sign}(r) \widetilde K$,
\item $h_r^\ast \widetilde G = |r| \widetilde G$, and
\item $h_r^\ast \widetilde \Omega = r \widetilde \Omega$,
\end{enumerate}
for all $r \in \bbR^\times$. It easily follows from the latter homogeneity conditions that $\widetilde K, \widetilde G, \widetilde \Omega$ are the homogenizations of
\begin{enumerate}
\item a vector bundle map $K \colon \mathsf DL \to \operatorname{sign}(L) \otimes \mathsf D L$,
\item a non degenerate, symmetric $2$-form $G \colon S^2\mathsf DL \to |L|$,
\item a non-degenerate, alternating $2$-form $\Omega \colon \wedge^2 \mathsf DL \to L$,
\end{enumerate}
satisfying
\begin{itemize}
\item[\checkmark] $K^2 = -1$, where $K^2$ is the composition
\[
\mathsf DL \overset{K}{\longrightarrow} \operatorname{sign}(L) \otimes \mathsf DL \overset{\operatorname{id} \otimes K}{\longrightarrow} \operatorname{sign}(L)^{\otimes 2}  \otimes \mathsf DL = \mathsf DL;
\]
\item[\checkmark] $G$ is \emph{positive definite}, i.e.~for all $D \in \mathsf DL\smallsetminus 0$, $G(D,D)$ is a positive element in $|L|$;
\item[\checkmark] $\Omega = G(-, K -)$, where, according to our free use of natural operations on $\mathsf DL$-tensors induced by natural operations on homogeneous tensors, we are extending $G$ to $\operatorname{sign}(L) \otimes \mathsf DL$ in the obvious way, and we are also using that $\operatorname{sign}(L) \otimes |L| = L$ (Remark \ref{rem:|-|,sign}).
\end{itemize}

The triple $(K, G, \Omega)$ is equivalent to ``almost contact metric''-like data. To explain this, first of all, recall from \cite[Section 6.3]{TVY20} that $G$ is equivalent to a tuple $(\nabla, \varphi, g_M, \nu)$ where
\begin{itemize}
\item[] $\nabla$ is a linear connection in $L$,
\item[] $\varphi \colon |L| \cong \mathbb R_M$ is an orientation preserving trivialization,
\item[] $g_M$ is a Riemannian metric on $M$,
\item[] $\nu$ is a $1$-form on $M$,
\end{itemize}
with no further restrictions. The relationship between $(\nabla, \varphi, g_M, \nu)$ and $G$ is the following:
\begin{itemize}
\item[] $\nabla (TM)$ is the $G$-orthogonal complement of $\bbI \in \calD (L)$,
\item[] $\varphi G(\bbI, \bbI) = 1_M$ (the constant section equal to $1$),
\item[] $g_M (X, Y) = \varphi G(\nabla_X, \nabla_Y)$ for all $X, Y \in \mathfrak X (M)$,
\item[] $ \nu (X) = \varphi \nabla_X \big(G(\bbI, \bbI)\big)$, 
\end{itemize}
where we are also denoting by $\nabla$ the induced connection in $|L|$.

\begin{remark}\label{rem:induced_connections}
Remember from Remark \ref{rem:L-conn}, that $\nabla$ induces connections on all other associated line bundles $f(L)$, also denoted $\nabla$ when $\dot f \neq 0$. However, beware that the trivialization $\varphi \colon |L| \to \bbR_M$ does not intertwine $\nabla$ with the canonical flat connection $\nabla^{can}$. It rather intertwines $\nabla$ with $\nabla^{can} + \nu$, i.e., for all $f \in C^\infty (M)$, and all $X \in \mathfrak X (M)$, we have
\[
\varphi \nabla_X (\varphi^{-1} f ) = X(f) +  \nu(X)f.
\]
It follows that the curvature of $\nabla$ is exactly $d\nu$, i.e.~for all $X, Y \in \mathfrak X(M)$ and all $\lambda \in \Gamma (L)$ we have
\[
\nabla_X\nabla_Y \lambda -\nabla_Y \nabla_X \lambda - \nabla_{[X,Y]}\lambda = d\nu (X,Y) \lambda.
\]
\end{remark}

\begin{remark}
Actually, $G$ (hence $\widetilde G$) is completely determined by the triple $(\varphi, g_M, \nu)$ \cite[Section 6.3]{TVY20}. However, the connection $\nabla$ (and the induced connections in all associated bundles) is often useful, so we prefer to keep it among the relevant data.
\end{remark}

We can now decompose $K$ using the isomorphism $\mathsf DL \cong \bbR_M \oplus TM$ induced by $\nabla$. Namely, once $G$ has been fixed, $K$ is equivalent to a tuple $(\phi, \xi, \eta, \gamma)$ where
\begin{itemize}
\item[] $\phi$ is a vector bundle map $\phi \colon TM \to \operatorname{sign}(L) \otimes TM$,
\item[] $\xi$ is a section of $\operatorname{sign}(L) \otimes TM$,
\item[] $\eta$ is a section of $\operatorname{sign}(L) \otimes T^\ast M$,
\item[] $\gamma$ is a section of $\operatorname{sign}(L)$.
\end{itemize}
The relationship between $(\phi, \xi, \eta, \gamma)$ and $K$ is the following:
\begin{itemize}
\item[] $K \nabla_X = \nabla_{\phi X} +  \eta(X)  \bbI$,
\item[] $K \bbI = -\nabla_\xi + \gamma  \bbI$,
\end{itemize}
for all $X \in \mathfrak X (M)$, where $\nabla \colon TM \to \mathsf DL$ is extended to $|L| \otimes TM$ in the obvious way.
Moreover, from $K^2 = -1$, the tuple $(\phi, \xi, \eta, \gamma)$ must satisfy 
\begin{itemize}
\item[] $\phi^2 = -1 + \xi \otimes \eta$,
\item[] $\phi \xi = - \gamma \xi$,
\item[] $\eta \circ \phi  = - \gamma  \eta$,
\item[] $\eta(\xi) = 1- \gamma^2$,
\end{itemize}
where, as usual, for every line bundle $L'$, we identify $|L'|^2$ with $\bbR_M$ and we also extend all vector bundle maps $F\colon E \to E'$ to tensor products of the type $L' \otimes E \to L' \otimes E'$ as $\operatorname{id} \otimes F$. In what follows we will understand all these extensions.

We get more identities from the condition that $G(-, K-)$ is alternating. Namely, for all $X, Y \in \mathfrak X (M)$,
\[
\begin{aligned}
0 & = G(\nabla_X, K \nabla_Y) + G(\nabla_Y, K \nabla_X) \\
& = G(\nabla_X, \nabla_{\phi Y}) + G (\nabla_Y, \nabla_{\phi X}) + \eta (Y) \otimes \cancel{G(\nabla_X, \bbI)} + \eta (X) \otimes \cancel{G (\nabla_Y, \bbI)} \\
& = \varphi^{-1} \big( g_M (X, \phi Y) + g_M (Y, \phi X) \big)
\end{aligned}
\]
(where we used that $\bbI$ is $G$-orthogonal to the image of $\nabla$), whence
\[
g_M (X, \phi Y) + g_M (Y, \phi X) = 0.
\]
Moreover, for all $X \in \mathfrak X(M)$
\[
\begin{aligned}
0 & = G(\nabla_X, K \bbI) + G(\bbI, K \nabla_X) \\
& = -G\big(\nabla_X, \nabla_\xi) + \gamma \otimes \cancel{G\big(\nabla_X, \bbI \big)} + \cancel{G\big(\bbI, \nabla_{\phi X} \big)} +  \eta(X)  \otimes  G\big(\bbI, \bbI \big) \\
& = \varphi^{-1} \big( -g_M (X, \xi) + \eta(X) \big),
\end{aligned}
\]
whence
\[
g_{M} (\xi , - ) =  \eta.
\]
Finally
\[
0  = G(\bbI, K \bbI) 
 = -\cancel{G(\bbI, \nabla_\xi)} + \gamma \otimes G(\bbI, \bbI) = \varphi^{-1} \big( \gamma \big),
\]
whence
\[
\gamma = 0.
\]
Now, we take care of $\Omega$, which is equivalent to a pair $(\alpha, \beta)$ where
\begin{itemize}
\item[] $\alpha \colon \wedge^2 TM \to L$ is an $L$-valued $2$-form on $M$, 
\item[] $\beta \colon TM \to L$ is an $L$-valued $1$-form on $M$,
\end{itemize}
with no further restrictions (see, e.g., \cite[Remark 2.7]{BVZ19}). Using the connection $\nabla$, we can express the relationship between $(\alpha, \beta)$ and $\Omega$ as follows:
\begin{itemize}
\item[] $\beta (X) = \Omega (\bbI, \nabla_X)$,
\item[] $\alpha (X, Y)  = \Omega (\nabla_X, \nabla_Y) -d^\nabla \beta (X, Y)$,
\end{itemize}
for all $X, Y \in \mathfrak X (M)$.  Now, from $\Omega = G(-, K-)$ we get, for all $X,Y \in \mathfrak X (M)$,
\[
\beta (X) = \Omega (\bbI, \nabla_X)  = G(\bbI, K\nabla_X) =  \varphi^{-1}\big( \eta (X) \big),
\]
whence
\[
\eta = \varphi \beta,
\]
and 
\[
\begin{aligned}
\alpha (X,Y) & =  \Omega (\nabla_X, \nabla_Y) +d^\nabla \beta (X, Y) \\
& = G(\nabla_X, K \nabla_Y) - d^\nabla \varphi^{-1} \eta (X, Y) \\
& = G(\nabla_X, \nabla_{\phi Y}) + \eta(Y) \otimes \cancel{G(\nabla_X, \bbI)} - d^\nabla \varphi^{-1} \eta (X, Y) \\
& = \varphi^{-1} \Big( g_M (X, \phi Y) - \varphi d^\nabla \varphi^{-1} \eta (X, Y)\Big) \\
& = \varphi^{-1} \Big( g_M (X, \phi Y) - \big(d^{can} \eta +  \nu \wedge \eta\big) (X, Y) \Big)
\end{aligned}
\]
where, in the last step, we also used Remark \ref{rem:induced_connections}. We conclude that
\[
g_M (-, \phi -) - d^{can} \eta -  \nu \wedge \eta = \varphi \alpha.
\]
The overall discussion proves the following

\begin{theorem}\label{theor:hom_ah}
Let $L \to M$ be a line bundle. The assignment 
\[
(\widetilde K, \widetilde G, \widetilde \Omega) \mapsto (\varphi, g_M, \nu, \phi, \xi, \eta, \alpha, \beta)
\] establishes a one-to-one correspondence between homogeneous almost Hermitian structures on the homogeneous manifold $\widetilde L \to M$ and tuples where 
\begin{itemize}
\item[] $\varphi \colon |L| \cong \bbR_M$ is an orientation preserving trivialization,
\item[] $g_M$ is a Riemannian metric on $M$,
\item[] $\nu$ is a $1$-form on $M$,
\item[] $\phi$ is a vector bundle map $TM  \to \operatorname{sign} (L) \otimes TM$,
\item[] $\xi$ is a $\operatorname{sign}(L)$-valued vector field on $M$,
\item[] $\eta$ is a  $\operatorname{sign}(L)$-valued $1$-form on $M$,
\item[] $\alpha$ is an $L$-valued $2$-form on $M$,
\item[] $\beta$ is an $L$-valued $1$-form on $M$,
\end{itemize}
satisfying the following identities
\begin{align}
\phi^2 & = -1 + \xi \otimes \eta \label{ACM1}\\
\phi \xi &= 0 \label{ACM2}\\
\eta \circ \phi & = 0 \label{ACM3}\\
\eta (\xi ) & = 1 \label{ACM4}\\
g_M (-, \phi -) + g_M (\phi -, -) & = 0 \label{ACM5}\\
g_M (\xi , -) & = \eta \label{ACM6}\\
\varphi \beta & = \eta \label{ACM7}\\
\varphi \alpha & = g_M (-, \phi -) - d^{can} \eta -  \nu \wedge \eta. \label{ACM8}
\end{align}
\end{theorem}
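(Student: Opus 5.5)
The plan is to assemble the correspondence in three layers, each a bijection, and then check that the almost Hermitian conditions translate exactly into \eqref{ACM1}--\eqref{ACM8}.

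First, homogenization identifies homogeneous almost Hermitian structures $(\widetilde K,\widetilde G,\widetilde\Omega)$ on $\widetilde L$ with triples $(K,G,\Omega)$, where $K\colon\mathsf DL\to\operatorname{sign}(L)\otimes\mathsf DL$, $G\colon S^2\mathsf DL\to|L|$ and $\Omega\colon\wedge^2\mathsf DL\to L$. These must satisfy $K^2=-1$, $G$ positive definite, and $\Omega=G(-,K-)$. This step only uses that homogenization is compatible with pointwise operations, so the algebraic conditions upstairs hold if and only if they hold downstairs. Since $\Omega$ is determined by $(G,K)$, the data reduce to the pair $(G,K)$ subject to $K^2=-1$ and $G(-,K-)$ being skew.

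Second, I invoke \cite[Section 6.3]{TVY20}: positive definite $G$ corresponds bijectively to $(\nabla,\varphi,g_M,\nu)$, where $\nabla$ is determined by the rest. Given $G$, hence $\nabla$ and the splitting $\mathsf DL\cong\bbR_M\oplus TM$, the map $K$ corresponds bijectively to $(\phi,\xi,\eta,\gamma)$ via $K\nabla_X=\nabla_{\phi X}+\eta(X)\bbI$ and $K\bbI=-\nabla_\xi+\gamma\bbI$. Likewise $\Omega$ corresponds bijectively to $(\alpha,\beta)$ by \cite[Remark 2.7]{BVZ19}. This yields an injective assignment into tuples.

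Third, I translate the conditions, as in the computation preceding the statement.
\begin{itemize}
\item Expanding $K^2=-1$ on $\nabla_X$ and on $\bbI$ gives four block equations.
\item Skewness of $G(-,K-)$, evaluated on the pairs $(\nabla_X,\nabla_Y)$, $(\nabla_X,\bbI)$ and $(\bbI,\bbI)$, gives \eqref{ACM5}, \eqref{ACM6} and $\gamma=0$. Substituting $\gamma=0$ turns the block equations into \eqref{ACM1}--\eqref{ACM4}.
\item Computing $\beta$ and $\alpha$ from $\Omega=G(-,K-)$ gives \eqref{ACM7} and \eqref{ACM8}. Here one uses Remark \ref{rem:induced_connections} to write $\varphi d^\nabla\varphi^{-1}\eta=d^{can}\eta+\nu\wedge\eta$.
\end{itemize}

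The main obstacle is surjectivity, which requires every step to be reversible. Given a tuple satisfying \eqref{ACM1}--\eqref{ACM8}, I would proceed in order.
\begin{itemize}
\item Build $G$ from $(\varphi,g_M,\nu)$ using \cite{TVY20}; this also produces $\nabla$.
\item Define $K$ from $(\phi,\xi,\eta,0)$. Then \eqref{ACM1}--\eqref{ACM4} give $K^2=-1$.
\item Conditions \eqref{ACM5}, \eqref{ACM6} and $\gamma=0$ give that $G(-,K-)$ is skew. Checking this on the three types of pairs suffices, because $\nabla_X$ and $\bbI$ span $\mathsf DL$.
\item Set $\Omega:=G(-,K-)$. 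It is non-degenerate because $G$ is and $K$ is invertible. By \eqref{ACM7} and \eqref{ACM8}, its associated pair is exactly the given $(\alpha,\beta)$.
\end{itemize}
Equations \eqref{ACM7} and \eqref{ACM8} therefore do not constrain the other data; they only record that $(\alpha,\beta)$ is determined by them. Hence the assignment is bijective.

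Finally, I must verify that homogenizing $(K,G,\Omega)$ gives an almost Hermitian structure with the stated homogeneity degrees. This holds by construction, since $\operatorname{sign}$, $|\cdot|$ and $\operatorname{sign}\cdot|\cdot|=\operatorname{id}$ are the relevant characters.
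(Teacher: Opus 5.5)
Your proposal is correct and follows essentially the paper's route. You homogenize to $(K,G,\Omega)$, encode $G$ by $(\varphi,g_M,\nu)$ via \cite{TVY20}, decompose $K$ and $\Omega$ along the splitting induced by $\nabla$, and translate $K^2=-1$, skewness of $G(-,K-)$ and $\Omega=G(-,K-)$ into \eqref{ACM1}--\eqref{ACM8}; your explicit converse (rebuilding $G$, setting $\gamma=0$, and noting that \eqref{ACM7}, \eqref{ACM8} merely record $(\alpha,\beta)$) simply spells out what the paper leaves implicit in ``the overall discussion''.
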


\begin{remark}
There are obvious redundancies in the statement of Theorem \ref{theor:hom_ah}. For instance $\alpha, \beta$ could be excluded from the tuple in the statement as they can be recovered from the other data.
\end{remark}

\begin{definition}\label{def:ACMDSH}
A tuple $(\varphi, g_M, \nu, \phi, \xi, \eta, \alpha, \beta)$ as in Theorem \ref{theor:hom_ah} will be referred to as a \textbf{almost contact metric dataset of homogeneity type H} (shortly, an \textbf{ACM dataset of type H}) on the line bundle $L \to M$ (or on $M$). 
\end{definition}

\begin{remark}\label{rem:triv_LB_1}
Let $L \to M$ be a line bundle and let $(\varphi, g_M, \nu, \phi, \xi, \eta, \alpha, \beta)$ be an ACM dataset of type H on it. If $L = \bbR_M$ is the trivial line bundle (hence $|L| = \operatorname{sign}(L) = \bbR_M$ as well) then $\varphi$ can be seen as a positive function on $M$. More importantly,
$\phi, \xi, \eta, \alpha, \beta$ are all ordinary tensors on $M$. In fact, in this case, the tuple $(\phi, \xi, \eta, g_M)$ is an ordinary \emph{almost contact metric structure} on $M$. Indeed, the identities \eqref{ACM1}, \eqref{ACM4} tell us that the triple $(\phi, \xi, \eta)$ is an \emph{almost contact structure}. Moreover, it easily follows from \eqref{ACM5}, \eqref{ACM1} and \eqref{ACM6} that
\[
g_M (\phi-, \phi-) = g + \eta \otimes \eta.
\]

This motivates the terminology in Definition \ref{def:ACMDSH}. Still the ACM dataset contains more information than the almost contact metric structure, the extra information consisting in $\varphi$ and the $1$-form $\nu$ (while $\alpha, \beta$ are determined by the other data).
\end{remark}

In the last part of this subsection we characterize the case $\nu = 0$ in terms of the Riemannian geometry of $\widetilde L$. This will be relevant in the next subsections.

\begin{lemma}
Let $L \to M$ be a line bundle, let $\widetilde G$ be a positive homogeneous Riemannian metric on $\widetilde L$, let $G \colon S^2 \mathsf DL \to |L|$ be the associated positive definite, symmetric $2$-form, and let $(\varphi, g_M, \nu)$ be the corresponding tuple. Then $\nu = 0$ if and only if the Euler vector field $\calE$ on $\widetilde L$ is pre-geodesic, i.e.~its orbits, that is the connected components of the fibers of the projection $\widetilde L \to M$, are unparameterized geodesics.
\end{lemma}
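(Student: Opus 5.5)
The plan is to use the standard Riemannian characterization of pre-geodesic fields. A nowhere vanishing vector field $\calE$ has integral curves that are unparameterized geodesics if and only if $\nabla^{\widetilde G}_{\calE}\calE$ is pointwise proportional to $\calE$. Here $\nabla^{\widetilde G}$ is the Levi-Civita connection of $\widetilde G$, and $\calE$ is nowhere zero on $\widetilde L$. I will then test $\nabla^{\widetilde G}_{\calE}\calE$ against a convenient frame of vectors orthogonal to $\calE$.

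At each point of $\widetilde L$ the tangent space splits $\widetilde G$-orthogonally as $\bbR\,\calE \oplus \widetilde{\nabla(TM)}$. This holds because $\calE = \widetilde\bbI$, homogenization respects pointwise operations, and $\nabla(TM)$ is by definition the $G$-orthogonal complement of $\bbI$. So it suffices to show that $\widetilde G(\nabla^{\widetilde G}_{\calE}\calE, Z) = 0$ for all $Z = \widetilde{\nabla_X}$ with $X \in \mathfrak X(M)$ if and only if $\nu = 0$.

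For this I will use the Koszul formula with both of the first two slots equal to $\calE$:
\[
2\widetilde G(\nabla^{\widetilde G}_{\calE}\calE, Z) = 2\,\calE\big(\widetilde G(\calE,Z)\big) - Z\big(\widetilde G(\calE,\calE)\big) - 2\,\widetilde G([\calE,Z],\calE).
\]
The first term vanishes since $\widetilde G(\calE, Z) = \widetilde{G(\bbI,\nabla_X)} = 0$. The last term vanishes since $Z$ is $0$-homogeneous. Indeed $[\calE, Z] = \widetilde{[\bbI,\nabla_X]} = 0$, because $\bbI$ commutes with every derivation; equivalently, $\calE$ generates the flow of $h$ on each component and $Z$ is $h$-invariant. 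For the middle term, compatibility of homogenization with the action of derivations (the connection $\mathsf DL \to \mathsf D|L|$) gives
\[
Z\big(\widetilde G(\calE,\calE)\big) = \widetilde{\nabla_X\big(G(\bbI,\bbI)\big)} = \widetilde{\varphi^{-1}\nu(X)} .
\]
The last equality is the definition of $\nu$, with $\nabla$ the induced connection on $|L|$.

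Hence $2\widetilde G(\nabla^{\widetilde G}_{\calE}\calE, \widetilde{\nabla_X}) = -\widetilde{\varphi^{-1}\nu(X)}$. Since $\varphi^{-1}$ is an isomorphism $\bbR_M \to |L|$ and homogenization is injective, this vanishes for all $X$ if and only if $\nu = 0$, which gives the claim. The main point needing care is the middle term, namely correctly identifying the derivative of the $|r|$-homogeneous function $\widetilde G(\calE,\calE)$ along a horizontal lift with the homogenization of $\nabla_X G(\bbI,\bbI)$. Everything else is formal.
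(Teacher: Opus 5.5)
Your proof is correct and is essentially the paper's argument. Both proofs test $\nabla^{LC}_\calE \calE$ against the $\widetilde G$-orthogonal complement of $\calE$ and identify the result with $-\tfrac12$ times the horizontal derivative of $\widetilde G(\calE,\calE)=\widetilde{G(\bbI,\bbI)}$, which vanishes exactly when $\nu=0$. The only difference is how that identity is reached: you use the Koszul formula together with $[\calE,\widetilde{\nabla_X}]=0$ and $\widetilde G(\calE,\widetilde{\nabla_X})=0$, whereas the paper derives it from the homogeneity $\calL_\calE \widetilde G=\widetilde G$. Your sign $-\tfrac12$ is the correct one; the paper's $+\tfrac12$ is a harmless slip.
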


\begin{proof}
We denote by $\nabla^{LC}$ the Levi-Civita connection on $\widetilde L$ associated to the metric $\widetilde G$. Now, put $u = G(\bbI, \bbI) \in \Gamma (|L|)$. The homogenization of $u$ is $\widetilde u = \widetilde{G(\bbI, \bbI)} = \widetilde G (\calE, \calE)$. From the homogeneity condition on $\widetilde G$, we have $\calL_\calE \widetilde G = \widetilde G$. Hence, for all $U, V \in \mathfrak X (\widetilde L)$:
\[
\begin{aligned}
\widetilde G (U,V) & = (\calL_\calE \widetilde G)(U, V) \\
& = \calL_\calE \big( \widetilde G (U, V)\big) - \widetilde G \big([\calE , U], V\big) - \widetilde  G \big(U, [\calE , V]\big) \\
& = \widetilde G \big( \nabla^{LC}_\calE U, V \big) +  \widetilde G \big(  U, \nabla^{LC}_\calE V \big) - \widetilde G \big([\calE , U], V\big) - \widetilde  G \big(U, [\calE , V]\big) \\
& = \widetilde G \big( \nabla^{LC}_\calE U - [\calE , U], V \big)  + \widetilde G \big(  U, \nabla^{LC}_{\calE} V - [\calE , V]\big) \\
& = \widetilde G \big( \nabla^{LC}_U \calE, V \big) +  \widetilde G \big(  U, \nabla^{LC}_V \calE \big).
\end{aligned}
\]
In the particular case when $V = \calE$ and $U$ is orthogonal to $\calE$ we obtain
\[
0 = \widetilde G (U, \calE) = \widetilde G \big( \nabla^{LC}_U \calE, \calE \big) +  \widetilde G \big(  U, \nabla^{LC}_\calE \calE \big),
\]
i.e.
\begin{equation}\label{eq:1}
 \widetilde G \big(  U, \nabla^{LC}_\calE \calE \big) = - \widetilde G \big( \nabla^{LC}_U \calE, \calE \big) = \frac{1}{2} \calL_U  \widetilde G (\calE, \calE) = \frac{1}{2} \calL_U \widetilde u.
\end{equation}
Now, $\nu = 0$ if and only if $u$ is $\nabla$-parallel, if and only if $\widetilde u$ is constant along all directions orthogonal to $\calE$, which, in view of \eqref{eq:1}, is in turn equivalent to 
\[
\widetilde G \big(  U, \nabla^{LC}_\calE \calE \big) = 0
\]
for all vector fields $U$ orthogonal to $\calE$, in other words $\nabla^{LC}_\calE \calE$ is parallel to $\calE$.
\end{proof}

\begin{corollary}\label{cor:ACM}
Let $L \to M$ be an orientable and oriented  line bundle. The assignment 
\[
(\widetilde K, \widetilde G, \widetilde \Omega) \mapsto (\varphi ; \phi, \xi, \eta, g_M)
\] establishes a one-to-one correspondence between homogeneous almost Hermitian structures on $\widetilde L$ such that the Euler vector field is pre-geodesic and pairs consisting of an orientation preserving trivialization $\varphi: |L| \cong \bbR_M$ and an almost contact metric structure $(\phi, \xi, \eta, g_M)$ on $M$. 
\end{corollary}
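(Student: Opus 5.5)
The corollary follows by restricting the bijection of Theorem \ref{theor:hom_ah} to the locus $\nu = 0$, using the preceding Lemma, and then using the orientation of $L$ to turn all $\operatorname{sign}(L)$-valued data into ordinary tensors. The plan has three steps.

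First, by Theorem \ref{theor:hom_ah}, homogeneous almost Hermitian structures on $\widetilde L$ correspond bijectively to ACM datasets of type H, $(\varphi, g_M, \nu, \phi, \xi, \eta, \alpha, \beta)$. The preceding Lemma shows that the Euler vector field $\calE$ is pre-geodesic for $\widetilde G$ if and only if $\nu = 0$. Since $\nu$ depends only on $G$, hence only on $\widetilde G$, restricting the bijection gives a bijection between the structures with $\calE$ pre-geodesic and the ACM datasets of type H with $\nu = 0$.

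Second, I use that $L$ is oriented. The orientation gives a canonical trivialization $\operatorname{sign}(L) \cong \bbR_M$: the positive sections of $L$ pull back to a locally constant $\operatorname{sign}$-homogeneous function, equivalently to a $\nabla^{can}$-parallel nowhere vanishing section. This trivialization intertwines $\nabla^{can}$ with the trivial connection, so $d^{can}$ becomes the ordinary de Rham differential $d$. Under it:
\begin{itemize}
\item $\phi$ becomes an endomorphism of $TM$;
\item $\xi$ becomes a vector field on $M$;
\item $\eta$ becomes a $1$-form on $M$;
\item $L \cong \operatorname{sign}(L)\otimes |L| \cong |L|$, so $\varphi$ identifies $L$ with $\bbR_M$ as well.
\end{itemize}
As in Remark \ref{rem:triv_LB_1}, identities \eqref{ACM1}--\eqref{ACM6} then say exactly that $(\phi, \xi, \eta, g_M)$ is an almost contact metric structure. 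The compatibility $g_M(\phi-,\phi-) = g_M - \eta\otimes\eta$ follows from \eqref{ACM1}, \eqref{ACM5} and \eqref{ACM6}. Conversely, the standard axioms of an almost contact metric structure ($\phi^2=-1+\xi\otimes\eta$, $\eta(\xi)=1$, $g_M(\phi-,\phi-) = g_M - \eta\otimes\eta$) imply \eqref{ACM1}--\eqref{ACM6}:
\begin{itemize}
\item $\phi\xi = 0$ and $\eta\circ\phi = 0$ are classical consequences of the first two axioms;
\item $g_M(\xi,-) = \eta$ follows by evaluating the metric compatibility on $\xi$;
\item the skew-symmetry \eqref{ACM5} follows by replacing one argument with $\phi Y$ in the metric compatibility and using $\eta\circ\phi = 0$.
\end{itemize}

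Third, with $\nu = 0$, identities \eqref{ACM7} and \eqref{ACM8} determine $\beta = \varphi^{-1}\eta$ and $\alpha = \varphi^{-1}\big(g_M(-,\phi-) - d\eta\big)$ uniquely from the rest. They impose no constraint, so the remaining free data are exactly $\varphi$ together with an almost contact metric structure. Composing these bijections gives the assignment in the statement.

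The main point to check is that the orientation-induced identification $\operatorname{sign}(L)\cong\bbR_M$ is canonical and compatible with $d^{can}$, so that the correspondence is natural and does not depend on further choices. The converse derivation of \eqref{ACM2}, \eqref{ACM3}, \eqref{ACM5} and \eqref{ACM6} from the usual almost contact metric axioms is classical (see \cite{B02}), and I would cite it rather than redo it.
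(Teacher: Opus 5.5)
Your proposal is correct and follows essentially the same route as the paper. Restrict the bijection of Theorem~\ref{theor:hom_ah} to $\nu = 0$, which is the pre-geodesic condition by the preceding lemma; use the orientation to trivialize $\operatorname{sign}(L)$ and $\varphi$ to trivialize $L$, so that \eqref{ACM1}--\eqref{ACM6} become exactly the almost contact metric axioms, while $\alpha,\beta$ are forced by \eqref{ACM7}, \eqref{ACM8}.

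You are slightly more explicit than the paper, which invokes the lemma only implicitly and does not spell out \eqref{ACM5} in the converse. Your compatibility $g_M(\phi-,\phi-) = g_M - \eta\otimes\eta$ has the correct sign, whereas Remark~\ref{rem:triv_LB_1} prints it with a plus.
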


\begin{proof}
Let $(\widetilde K, \widetilde G, \widetilde \Omega)$ be a homogeneous almost Hermitian structure on $\widetilde L$. If $L \to M$ is orientable and oriented, then $\operatorname{sign} (L) \to M$ can be canonically trivialized, and we can use the trivialization $\varphi$ to trivialize $L = \operatorname{sign} (L) \otimes |L|$ as well. In the next part of this proof we understand all these trivializations, so that $\phi, \eta, \xi, \alpha, \beta$ all become plain tensors on $M$ and, from Remark \ref{rem:triv_LB_1}, $(\phi, \xi, \eta, g_M)$ is an almost contact metric structure.

Conversely, let $(\phi, \xi, \eta, g_M)$ be an almost contact metric structure on $M$, then the identities \eqref{ACM2}, \eqref{ACM3}, \eqref{ACM6} easily follow. Finally, put $\nu = 0$ and use $(\phi, \xi, \eta, g_M)$ to define $\alpha, \beta$ via \eqref{ACM7}, \eqref{ACM8}. This concludes the proof.
\end{proof}

\begin{remark}
Assume $\nu = 0$ but $L$ non-trivial. In this case the orthogonal distribution $\calE^\bot$ to the Euler vector field on $\widetilde L$ is involutive, the leaves $\calO$ of $\calE^\bot$ are the level sets of the $|-|$-homogeneous function $\widetilde G (\calE, \calE)$ and they are double covers of $M$ under the projection $\pi : \widetilde L \to M$. The pull-back of $L$ to $\calO$ is trivial, hence $(\phi, \xi, \eta, g_M)$ can be pulled-back to a standard almost contact metric structure on $\calO$. The leaf $\calO : \widetilde G (\calE, \calE) = 1$ is a preferred choice (see also \cite{GGM25} for more details).
\end{remark}

Let $L \to M$ be a line bundle, let $(\widetilde K, \widetilde G, \widetilde \Omega)$ be a homogeneous almost Hermitian structure on $\widetilde L$. In the next subsections of this section we impose the standard integrability conditions on $(\widetilde K, \widetilde G, \widetilde \Omega)$ and express them in terms of the equivalent ACM dataset.

\subsection{Homogeneous Hermitian Structures}

The homogeneous almost Hermitian structure $(\widetilde K, \widetilde G, \widetilde \Omega)$ is \emph{Hermitian} when $\widetilde K$ is a complex structure, i.e.~the Nijenhuis torsion $N_{\widetilde K}$ of $\widetilde K$ vanishes identically. Remember that $N_{\widetilde K}$ is the vector valued $2$-form given by
\[
N_{\widetilde K} (U_1, U_2) = [\widetilde KU_1, \widetilde KU_2] + \widetilde K^2 [U_1, U_2] - \widetilde K \big([\widetilde K U_1, U_2] + [U_1, \widetilde K U_2] \big),
\]
for all $U_1, U_2 \in \mathfrak X (\widetilde L)$. Equivalently $(\widetilde K, \widetilde G, \widetilde \Omega)$ is an Hermitian structure if and only if $N_K = 0$ where $N_K \colon \wedge^2 \mathsf DL \to \mathsf D L$ is the $\mathsf DL$-valued $2$-form on $\mathsf DL$ given by 
\begin{equation}\label{eq:Atiyah_torsion}
N_{K} (D_1, D_2) = [KD_1, KD_2] + K^2 [D_1, D_2] - K \big([K D_1, D_2] + [D_1, K D_2] \big),
\end{equation}
for all $D_1, D_2 \in \Gamma (\mathsf DL)$. The square brackets $[D_1, D_2]$ in \eqref{eq:Atiyah_torsion} denote the Lie derivative $\calL_{D_1} D_2$ (see the beginning of Subsection \ref{subsec:AssLB}), and, as usual,
we extend $K \colon \mathsf D L \to \operatorname{sign}(L) \otimes \mathsf D L$ to vector bundle maps $K \colon f(L) \otimes \mathsf D L \to (f \operatorname{sign})(L) \otimes \mathsf D L$ in the obvious way.

Using the direct sum decomposition $\mathsf DL = TM \oplus \bbR_M$ given by the connection $\nabla$, we can decompose the identity $N_K = 0$ into $4$ identities. To do this remember first from Remark \ref{rem:induced_connections} that the curvature of the connection $\nabla$ is $d\nu$. Now, let $X, X_1, X_2 \in \mathfrak X (M)$ and specialize the condition $N_K (D_1, D_2) = 0$ to the following cases:

\begin{enumerate}

\item $D_1 = \nabla_{X_1}, D_2 = \nabla_{X_2}$: in this case, a direct computation using \eqref{ACM1} and  \eqref{ACM3} shows that 
\[
\begin{aligned}
& N_{K} (D_1, D_2) \\
& = N_{K} (\nabla_{X_1},\nabla_{X_2}) \\
& = \nabla_{N_\phi (X_1, X_2) + \big( d^{can}\eta (X_1, X_2) + d\nu (\phi X_1, X_2) + d\nu (X_1, \phi X_2) \big) \xi} \\
& \quad + \Big( d\nu (\phi X_1, \phi X_2) - d\nu (X_1, X_2) + d^{can}\eta (\phi X_1, X_2) + d^{can}\eta (X_1, \phi X_2)\Big)\bbI.
\end{aligned}
\]
We conclude that
\begin{equation*}
N_\phi + \xi \otimes \big(d^{can} \eta + d\nu (\phi -, -) + d\nu (-, \phi -)\big) = 0.
\end{equation*}
and
\begin{equation*}
d\nu (\phi -, \phi -) - d\nu  + d^{can}\eta (\phi -, -) + d^{can}\eta (-, \phi -) = 0.
\end{equation*}

\item $D_1 = \bbI, D_2 = \nabla_X$: in this case
\[
\begin{aligned}
N_{K} (D_1, D_2) & = N_{K} (\bbI, \nabla_X) \\
&  = -\nabla_{(\calL_\xi \phi)(X) + d\nu (\xi,X) \xi } - \big((\calL_\xi \eta)(X) + d \nu (\xi, \phi X)\big)\bbI.
\end{aligned}
\]
We conclude that
\begin{equation*}
\calL_\xi \phi + \xi \otimes \iota_\xi d\nu = 0,
\end{equation*}
and
\begin{equation*}
\calL_\xi \eta + (\iota_\xi d\nu) \circ \phi = 0.
\end{equation*}

\end{enumerate}

Summarizing, we have proved the following

\begin{theorem}\label{theor:NACM}
Let $L \to M$ be a line bundle. The assignment 
\[
(\widetilde K, \widetilde G, \widetilde \Omega) \mapsto (\varphi, g_M, \nu, \phi, \xi, \eta, \alpha, \beta)
\]
establishes a one-to-one correspondence between homogeneous Hermitian structures on $\widetilde L$ and ACM datasets of H-type satisfying the following integrability conditions:
\begin{align}
N_\phi + \xi \otimes \big(d^{can} \eta + d\nu (\phi -, -) + d\nu (-, \phi -)\big) &= 0 \label{NACM1}\\
d\nu (\phi -, \phi -) - d\nu  + d^{can}\eta (\phi -, -) + d^{can}\eta (-, \phi -) & = 0 \label{NACM2}\\
\calL_\xi \phi + \xi \otimes \iota_\xi d\nu & = 0 \label{NACM3}\\
\calL_\xi \eta + (\iota_\xi d\nu) \circ \phi &= 0. \label{NACM4}
\end{align}  
\end{theorem}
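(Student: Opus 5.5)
By Theorem \ref{theor:hom_ah}, homogeneous almost Hermitian structures on $\widetilde L$ already correspond bijectively to ACM datasets of type H. So the only thing to prove is that integrability of $\widetilde K$ is equivalent to \eqref{NACM1}--\eqref{NACM4}. Homogenization commutes with brackets and with $K$, hence $\widetilde{N_K}=N_{\widetilde K}$, and $\widetilde K$ is integrable if and only if the Atiyah torsion $N_K\colon \wedge^2\mathsf DL\to\mathsf DL$ of \eqref{eq:Atiyah_torsion} vanishes. Since $N_K$ is tensorial and $\mathsf DL=\nabla(TM)\oplus\bbR_M\bbI$, it is enough to evaluate it on pairs $(\nabla_{X_1},\nabla_{X_2})$ and $(\bbI,\nabla_X)$. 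The pair $(\bbI,\bbI)$ vanishes by skew-symmetry. Each value is then split into its $\nabla(TM)$-component and its $\bbI$-component.

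The computation uses three bracket rules:
\begin{itemize}
\item[] $[\nabla_X,\nabla_Y]=\nabla_{[X,Y]}+d\nu(X,Y)\bbI$, since the curvature of $\nabla$ is $d\nu$ by Remark \ref{rem:induced_connections};
\item[] $[\bbI,D]=0$, since $\bbI$ is central in $\calD(L)$;
\item[] the analogues of these when $\operatorname{sign}(L)$-valued coefficients are involved.
\end{itemize}
For the last rule, the derivation $\nabla_{\phi X}$ is understood via the identification $\mathsf D\operatorname{sign}(L)\otimes\ldots$. The coefficients $\eta(X)\in\Gamma(\operatorname{sign}(L))$ get differentiated along $\nabla$, and by Remark \ref{rem:L-conn} the induced connection on $\operatorname{sign}(L)$ is $\nabla^{can}$. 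This is why $d^{can}\eta$, rather than some $\nu$-twisted differential, appears.

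With this in place we use $K\nabla_X=\nabla_{\phi X}+\eta(X)\bbI$ and $K\bbI=-\nabla_\xi$ (recall $\gamma=0$). Expanding $N_K(\nabla_{X_1},\nabla_{X_2})$ gives the ordinary Nijenhuis-type expression in $\phi$ plus curvature terms $d\nu(\phi X_1,\phi X_2)\bbI$, etc. The identities \eqref{ACM1} and \eqref{ACM3} simplify $K^2=-1$ and $K$ applied to $\bbI$-components. Collecting the $TM$-part gives \eqref{NACM1}, and the $\bbI$-part gives \eqref{NACM2}. Similarly, for $N_K(\bbI,\nabla_X)$, every bracket with $\bbI$ vanishes. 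What survives is $[K\bbI,K\nabla_X]-K[K\bbI,\nabla_X]$, i.e.\ brackets of $-\nabla_\xi$ with $\nabla_{\phi X}+\eta(X)\bbI$ and with $\nabla_X$. These produce $\calL_\xi\phi$ and $\calL_\xi\eta$ together with curvature terms $d\nu(\xi,\cdot)$. Splitting into components yields \eqref{NACM3} and \eqref{NACM4}.

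The main obstacle is bookkeeping rather than anything conceptual. One must check that the $\bbI$-components fed back through $K$ produce exactly the terms claimed, e.g.\ $\xi\otimes d^{can}\eta$ in \eqref{NACM1} coming from $-K(\ldots\bbI)=\ldots\nabla_\xi$. One must also track signs and twisting by $\operatorname{sign}(L)$, using $\operatorname{sign}(L)^{\otimes2}=\bbR_M$. The conditions come out in exactly the stated form only after using \eqref{ACM1}--\eqref{ACM4} to cancel terms like $\eta(\phi X)$ and $\phi\xi$. I would carry out the computation locally on a trivializing open set where $L=\bbR_M$. There $\nabla=d+a$ for a $1$-form $a$ and $\operatorname{sign}(L)$ is trivial, so everything is a check with ordinary tensors, and the result globalizes by naturality.
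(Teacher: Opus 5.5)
Your proposal is correct and follows the paper's own argument. You reduce integrability of $\widetilde K$ to the vanishing of the Atiyah torsion $N_K$ and evaluate it on $(\nabla_{X_1},\nabla_{X_2})$ and $(\bbI,\nabla_X)$ via the splitting $\mathsf DL=\nabla(TM)\oplus\bbR_M\bbI$, using that the curvature is $d\nu$, that $\bbI$ is central, and that $\operatorname{sign}(L)$-valued coefficients are differentiated by $\nabla^{can}$; reading off the $\nabla(TM)$- and $\bbI$-components then gives \eqref{NACM1}--\eqref{NACM4}, exactly as in the paper, and the local trivialization you suggest is only a convenient way to carry out the same direct computation that the paper records as explicit formulas for $N_K(\nabla_{X_1},\nabla_{X_2})$ and $N_K(\bbI,\nabla_X)$.
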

%

\begin{remark}\label{rem:depend_1}
The integrability conditions \eqref{NACM1}--\eqref{NACM4} are not independent. Namely, the last three are actually a consequence of the first one. Indeed, for all $X, Y \in \mathfrak X (M)$
\[
\begin{aligned}
\eta\big(N_\phi (X, Y))  & = \eta \left( [\phi X, \phi Y] \right) + \cancel{\eta \left(\phi^2 [X, Y] - \phi \big([\phi X, Y] + [X, \phi Y] \big) \right)} \\
& =  \eta \left( [\phi X, \phi Y] \right) \\
& = \cancel{\calL_{\phi X} \eta(\phi Y)} - \cancel{\calL_{\phi Y} \eta(\phi X)} - d^{can} \eta (\phi X ,\phi Y) \\
& = - d^{can} \eta (\phi X ,\phi Y),
\end{aligned}
\]
where we used \eqref{ACM3}. Now, suppose that \eqref{NACM1} holds true. Composing with $\eta$, and using \eqref{ACM4}, we find
\begin{align}
0 & = \eta\big(N_\phi (X, Y)\big) + d^{can} \eta (X, Y) + d\nu (\phi X, Y) + d\nu (X, \phi Y) \nonumber\\
& = - d^{can} \eta (\phi X, \phi Y) + d^{can} \eta (X, Y) + d\nu (\phi X, Y) + d\nu (X, \phi Y) \label{eq:2},
\end{align}
for all $X, Y \in \mathfrak X (M)$. Put $X = \xi$. Then, using \eqref{ACM2} we find
\begin{align}
0 & = - \cancel{d^{can} \eta (\phi \xi, \phi Y)} + d^{can} \eta (\xi, Y) + \cancel{d\nu (\phi \xi, Y)} + d\nu (\xi, \phi Y) \label{eq:4}\\
& = \big(\iota_\xi d^{can} \eta + (\iota_\xi d\nu) \circ \phi \big)( Y), \nonumber
\end{align}
for all $Y \in \mathfrak X (M)$, i.e.
\begin{equation}\label{eq:3}
\iota_\xi d^{can} \eta + (\iota_\xi d\nu) \circ \phi = 0.
\end{equation}

Using \eqref{ACM4}, from \eqref{eq:3} we also find
\[
\calL_\xi \eta + (\iota_\xi d\nu) \circ \phi = \iota_\xi d^{can} \eta + \cancel{d \big(\eta (\xi) \big) } +  (\iota_\xi d\nu) \circ \phi = 0,
\]
i.e.~the identity \eqref{NACM4} holds true. Moreover, putting $Y = \phi Z$ in \eqref{eq:4} we find
\[
\begin{aligned}
0 & = d^{can} \eta (\xi, \phi Z) + d\nu (\xi, \phi^2 Z) \\
& = d^{can} \eta (\xi, \phi Z) - d\nu (\xi, Z) + \eta(Z)  \cancel{d\nu (\xi, \xi)} \\
& = \big((\iota_\xi d^{can} \eta) \circ \phi - \iota_\xi d\nu \big) (Z),
\end{aligned}
\]
for all $Z \in \mathfrak X (M)$, i.e.
\begin{equation}\label{eq:5}
(\iota_\xi d^{can} \eta) \circ \phi - \iota_\xi d\nu = 0.
\end{equation}

Next, putting $X = X_1$ and $Y = \phi X_2$ in \eqref{eq:2} and using \eqref{eq:5} we find 
\[
\begin{aligned}
0 & = - d^{can} \eta (\phi X_1, \phi^2 X_2) + d^{can} \eta (X_1, \phi X_2) + d\nu (\phi X_1, \phi X_2) + d\nu (X_1, \phi^2 X_2) \\
& = d^{can} \eta (\phi X_1, X_2) + d^{can} \eta (X_1, \phi X_2) + d\nu (\phi X_1, \phi X_2) -d \nu (X_1, X_2) \\
 & \quad - \eta(X_2) \big(d^{can} \eta (\phi X_1, \xi) - d\nu (X_1, \xi)\big) \\
 & = d^{can} \eta (\phi X_1, X_2) + d^{can} \eta (X_1, \phi X_2) + d\nu (\phi X_1, \phi X_2) -d \nu (X_1, X_2) \\
 & \quad + \eta(X_2)  \cancel{\big( (\iota_\xi d^{can} \eta) \circ \phi - \iota_\xi d\nu \big)}(X_1) \\
 & = d^{can} \eta (\phi X_1, X_2) + d^{can} \eta (X_1, \phi X_2) + d\nu (\phi X_1, \phi X_2) -d \nu (X_1, X_2),
 \end{aligned}
\]
i.e.~the identity \eqref{NACM2} holds true. Finally, contracting \eqref{NACM1} with $\xi$ and $\phi X$,
\[
\begin{aligned}
0 & = N_\phi (\xi, \phi X) + \big(d^{can} \eta (\xi, \phi X) + \cancel{d\nu (\phi \xi, \phi X)} + d\nu (\xi, \phi^2 X)\big) \otimes \xi \\
& = \cancel{[\phi \xi, \phi^2 X]} + \phi^2 [\xi, \phi X] -\phi \big(\cancel{[\phi \xi, \phi X]} + [\xi, \phi^2 X]\big) + \cancel{\big( \iota_\xi d^{can} \eta + (\iota_\xi d\nu) \circ \phi \big)}(\phi X)  \xi \\
& = - [\xi, \phi X] + \eta\big([\xi, \phi X] \big)\xi + \phi [\xi,X] - \phi[\xi, \eta(X)  \xi] \\
& = - (\calL_\xi \phi)(X) + \eta\big([\xi, \phi X] \big)\xi - \calL_\xi \big(\eta (X)) \cancel{\phi \xi}  - \eta(X) \phi \cancel{[\xi,\xi]} \\
& = - (\calL_\xi \phi)(X) + \big( \calL_\xi \cancel{\eta(\phi X)} - \cancel{\calL_{\phi X} \eta(\xi)} - d^{can} \eta (\xi, \phi X) \big)\xi  \\
& = - \Big( \calL_\xi \phi + \xi \otimes \big((\iota_\xi d^{can} \eta) \circ \phi \big)\Big) (X) \\
& = - \big( \calL_\xi \phi + \xi \otimes \iota_\xi d \nu\big) (X) 
\end{aligned}
\]
for all $X \in \mathfrak X (M)$, where we used \eqref{ACM1}, \eqref{ACM2}, \eqref{ACM3}, \eqref{ACM4}, \eqref{eq:5}. In other words, \eqref{NACM3} holds true.
\end{remark}

\begin{corollary}\label{cor:NACM}
Let $L \to M$ be an orientable and oriented  line bundle. The assignment 
\[
(\widetilde K, \widetilde G, \widetilde \Omega) \mapsto (\varphi \,;\, \phi, \xi, \eta, g_M)
\]
establishes a one-to-one correspondence between homogeneous Hermitian structure on $\widetilde L$ such that the Euler vector field is pre-geodesic and pairs consisting of an orientation preserving trivialization $\varphi: |L| \cong \bbR_M$ and a normal almost contact metric structure $(\phi, \xi, \eta, g_M)$ on $M$. 
\end{corollary}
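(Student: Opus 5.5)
The plan is to combine Corollary \ref{cor:ACM} with Theorem \ref{theor:NACM} and Remark \ref{rem:depend_1}. By the Lemma preceding Corollary \ref{cor:ACM}, the Euler vector field being pre-geodesic is equivalent to $\nu = 0$. Since $L$ is oriented, $\operatorname{sign}(L)$ is canonically trivial, and we use $\varphi$ to trivialize $L = \operatorname{sign}(L) \otimes |L|$. Then $d^{can}$ becomes the ordinary de Rham differential $d$, and $\phi,\xi,\eta$ become ordinary tensors on $M$. By Corollary \ref{cor:ACM}, homogeneous almost Hermitian structures with pre-geodesic Euler field correspond bijectively to pairs $(\varphi; \phi,\xi,\eta,g_M)$ with $(\phi,\xi,\eta,g_M)$ an almost contact metric structure. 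It therefore remains to show that, within this correspondence, integrability of $\widetilde K$ is exactly normality of $(\phi,\xi,\eta)$.

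Setting $\nu = 0$ in Theorem \ref{theor:NACM}, $\widetilde K$ is integrable if and only if the following four conditions hold:
\[
N_\phi + \xi\otimes d\eta = 0,
\]
\[
d\eta(\phi-,-) + d\eta(-,\phi-) = 0,
\]
\[
\calL_\xi\phi = 0,
\]
\[
\calL_\xi\eta = 0.
\]
By Remark \ref{rem:depend_1}, the last three follow from the first, so integrability is equivalent to the single condition $N_\phi + \xi \otimes d\eta = 0$ alone. Up to the normalization convention for $d\eta$, this is precisely the standard definition of normality of an almost contact structure (see \cite{B02}).

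To close the argument I will check that the conventions match. The paper's $d^{can}\eta(X,Y)$ is defined via $\eta([X,Y]) = X\eta(Y) - Y\eta(X) - d\eta(X,Y)$, as used in Remark \ref{rem:depend_1}. Under this convention the normality tensor reads $N_\phi + \xi\otimes d\eta$ (some references write $2\,d\eta\otimes\xi$ with a different normalization of $d$). The bijection then restricts: homogeneous Hermitian structures with pre-geodesic Euler field correspond to pairs $(\varphi;\phi,\xi,\eta,g_M)$ with $(\phi,\xi,\eta,g_M)$ normal almost contact metric, and conversely such a pair, with $\nu=0$, gives an integrable $\widetilde K$.

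The only potential obstacle is this bookkeeping of conventions, namely the factor in $d\eta$ and the sign in the Nijenhuis torsion. It is settled by evaluating both sides on explicit vector fields, e.g.\ on $(\xi, X)$ and on pairs in $\ker\eta$. No genuine new computation is needed, since all substantive identities are already in Theorem \ref{theor:NACM} and Remark \ref{rem:depend_1}.
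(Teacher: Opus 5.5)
Your proposal is correct and follows essentially the same route as the paper. You restrict the bijection of Corollary \ref{cor:ACM} (where pre-geodesic means $\nu=0$), note that \eqref{NACM1} with $\nu=0$ is exactly normality, and use Remark \ref{rem:depend_1} to obtain \eqref{NACM2}--\eqref{NACM4} automatically. Your convention check is accurate: the paper's $d\eta$ has no factor $\tfrac12$, so normality reads $N_\phi+\xi\otimes d\eta=0$, a point the paper leaves implicit.
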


\begin{proof}
Let $(\widetilde K, \widetilde G, \widetilde \Omega)$ be a homogeneous Hermitian structure on $\widetilde L$. If $L$ is oriented then $(\widetilde K, \widetilde G, \widetilde \Omega)$ is equivalent to an almost contact metric structure $(\phi, \xi, \eta, g_M)$ on $M$ as in Corollary \ref{cor:ACM}. Moreover $\nu = 0$ and \eqref{NACM1} now says that $(\phi, \xi, \eta, g_M)$ is actually a normal almost contact metric structure. Conversely, let $(\phi, \xi, \eta, g_M)$ be a normal almost contact metric structure. Then, from Corollary \ref{cor:ACM} again, it comes from a homogeneous Hermitian structure with $\nu = 0$ on $\widetilde L$, the integrability condition on $(\phi, \xi, \eta, g_M)$ is exactly \eqref{NACM1}, and, from Remark \ref{rem:depend_1}, \eqref{NACM2}--\eqref{NACM4} follow.
\end{proof}


\subsection{Homogeneous Almost K\"ahler Structures}

In this subsection we impose an integrability condition on the symplectic structure $\widetilde \Omega$. Namely, let $(\widetilde K, \widetilde G, \widetilde \Omega)$ be a homogeneous almost Hermitian structure on $\widetilde L$. Remember that $(\widetilde K, \widetilde G, \widetilde \Omega)$ is an almost K\"ahler structure when $\widetilde \Omega$ is a symplectic structure, i.e.~$d\widetilde \Omega = 0$, or, equivalently, when $d_{\mathsf D L} \Omega = 0$. The latter identity is actually equivalent to $\alpha = 0$ (see \cite[Equation (17)]{BVZ19}), i.e.
\[
g_M (-, \phi -) - \nu \wedge \eta= d^{can}\eta.
\]
This proves the following

\begin{theorem}\label{theor:CM}
Let $L \to M$ be a line bundle. The assignment 
\[
(\widetilde K, \widetilde G, \widetilde \Omega) \mapsto (\varphi, g_M, \nu, \phi, \xi, \eta, \alpha, \beta)
\]
establishes a one-to-one correspondence between homogeneous almost K\"ahler structures on $\widetilde L$ and ACM datasets of type H additionally satisfying the following ``integrability'' condition:
\begin{equation}
g_M (-, \phi -) = d^{can}\eta + \nu \wedge \eta. \label{CM1}
\end{equation}  
\end{theorem}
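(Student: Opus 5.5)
The plan is to reduce the statement to Theorem \ref{theor:hom_ah} together with a description of $d_{\mathsf DL}\Omega$ in terms of the pair $(\alpha,\beta)$.

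First, by Theorem \ref{theor:hom_ah}, homogeneous almost Hermitian structures on $\widetilde L$ correspond bijectively to ACM datasets of type H. It therefore suffices to show that, under this correspondence, $d\widetilde\Omega=0$ holds exactly when \eqref{CM1} holds. Homogenization intertwines $d_{\mathsf DL}$ with $d$. So $d\widetilde\Omega=0$ if and only if $d_{\mathsf DL}\Omega=0$, where $\Omega\in\Omega^2_{\mathsf DL}(L)$.

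The key step is the claim that an Atiyah $2$-form $\Omega$ is $d_{\mathsf DL}$-closed if and only if $\alpha=0$. Here $\beta=\Omega(\bbI,\nabla_-)$ and $\alpha=\Omega(\nabla_-,\nabla_-)-d^\nabla\beta$, for any chosen connection $\nabla$. This is \cite[Equation (17)]{BVZ19}, but it can also be checked directly, and I would do so as follows.

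The complex $(\Omega^\bullet_{\mathsf DL}(L),d_{\mathsf DL})$ is acyclic, with contracting homotopy $\iota_{\bbI}$. Indeed, $\calL_{\bbI}=d_{\mathsf DL}\iota_{\bbI}+\iota_{\bbI}d_{\mathsf DL}$ acts as the identity on $L$-valued Atiyah forms, since $\bbI$ acts as $1$ on $L$ and commutes with every derivation. Hence every closed $\Omega$ satisfies $\Omega=d_{\mathsf DL}(\iota_{\bbI}\Omega)$.

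Now $\iota_{\bbI}\Omega$ is the Atiyah $1$-form $B$ whose restriction along $\nabla$ is $\beta$ and with $B(\bbI)=0$. A short computation gives
\[
d_{\mathsf DL}B(\nabla_X,\nabla_Y)=d^\nabla\beta(X,Y),\qquad d_{\mathsf DL}B(\bbI,\nabla_X)=\beta(X).
\]
In the first of these, the curvature terms drop out because $B(\bbI)=0$. Therefore $\Omega-d_{\mathsf DL}B$ vanishes on $(\bbI,\nabla_X)$ and equals $\alpha$ on $(\nabla_X,\nabla_Y)$.

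This gives both directions. If $\Omega$ is closed, then $\Omega=d_{\mathsf DL}B$, so $\alpha=0$. Conversely, if $\alpha=0$, then $\Omega=d_{\mathsf DL}B$, which is closed.

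Finally, we substitute \eqref{ACM8}. Since $\varphi$ is an isomorphism, $\alpha=0$ is equivalent to
\[
g_M(-,\phi-)-d^{can}\eta-\nu\wedge\eta=0,
\]
which is exactly \eqref{CM1}.

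I expect the main obstacle to be the sign and convention bookkeeping in this last comparison. The definition of $\alpha$ is stated with $-d^\nabla\beta$, but the subsequent derivation uses $+d^\nabla\beta$. The computation must therefore be carried out with one consistent convention, and I must confirm that the identity $\varphi\, d^\nabla(\varphi^{-1}\eta)=d^{can}\eta+\nu\wedge\eta$ from Remark \ref{rem:induced_connections} yields precisely \eqref{CM1}, not a variant with a flipped sign.
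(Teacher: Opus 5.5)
Your proposal is correct and follows the paper's argument. You reduce to Theorem~\ref{theor:hom_ah}, use $d\widetilde\Omega=0\iff d_{\mathsf DL}\Omega=0\iff\alpha=0$, and substitute \eqref{ACM8}. The only difference is that you prove $d_{\mathsf DL}\Omega=0\iff\alpha=0$ yourself, via the contracting homotopy $\iota_{\bbI}$ with $\calL_{\bbI}=\mathrm{id}$ on $L$-valued Atiyah forms, where the paper cites \cite[Equation (17)]{BVZ19}; your argument is sound.

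Your sign worry also resolves. The $+d^\nabla\beta$ in the paper's display is a typo: the very next line uses $-d^\nabla(\varphi^{-1}\eta)$, which matches the definition of $\alpha$. Since $\varphi\, d^\nabla(\varphi^{-1}\eta)=d^{can}\eta+\nu\wedge\eta$ by Remark~\ref{rem:induced_connections}, \eqref{CM1} comes out with exactly the stated sign.
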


\begin{remark}
Assume $\nu = 0$. In this case Equation \eqref{CM1} becomes $g_M (-, \phi -) = d^{can}\eta$ which should be compared with the usual compatibility between a metric $g_M$ and an almost contact structure $(\phi, \xi, \eta)$ defining a \emph{contact metric structure} (see, e.g., \cite{BG08}), namely
\[
2g_M (-, \phi -) = d\eta.
\]
The missing factor $2$ in \eqref{CM1} is only an apparent difference. Indeed by redefining 
\[
\begin{aligned}
\xi & \longrightarrow 2 \xi \\
\eta & \longrightarrow \tfrac{1}{2} \eta \\
g_M & \longrightarrow \tfrac{1}{4} g_M
\end{aligned}
\]
we can restore it. For this reason, in what follows, we will simply ignore this ``difference''.
\end{remark}

\begin{corollary}
Let $L \to M$ be an orientable and oriented line bundle. The assignment 
\[
(\widetilde K, \widetilde G, \widetilde \Omega) \mapsto (\varphi \,;\, \phi, \xi, \eta, g_M)
\]
establishes a one-to-one correspondence between homogeneous almost K\"ahler structures on $\widetilde L$ such that the Euler vector field is pre-geodesic and pairs consisting of an orientation preserving trivialization $\varphi : |L| \cong \bbR_M$ and a contact metric structure $(\phi, \xi, \eta, g_M)$ on $M$. 
\end{corollary}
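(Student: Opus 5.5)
The plan is to follow the pattern of the proofs of Corollaries \ref{cor:ACM} and \ref{cor:NACM}, now with Theorem \ref{theor:CM} in place of Theorem \ref{theor:NACM}. Recall that a homogeneous almost K\"ahler structure is in particular a homogeneous almost Hermitian structure. By the Lemma preceding Corollary \ref{cor:ACM}, the Euler vector field being pre-geodesic is equivalent to $\nu = 0$.

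Start with a homogeneous almost K\"ahler structure $(\widetilde K, \widetilde G, \widetilde \Omega)$ on $\widetilde L$ whose Euler field is pre-geodesic. Since $L$ is oriented, $\operatorname{sign}(L)$ is canonically trivialized. Using $\varphi$, we also trivialize $|L|$ and hence $L = \operatorname{sign}(L) \otimes |L|$. After these trivializations $d^{can}$ becomes the ordinary de Rham differential $d$. By Corollary \ref{cor:ACM}, $(\phi, \xi, \eta, g_M)$ is an almost contact metric structure on $M$. Because $\nu = 0$, condition \eqref{CM1} of Theorem \ref{theor:CM} reduces to $g_M(-, \phi -) = d\eta$. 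This is the contact metric compatibility, up to the harmless factor $2$ discussed in the Remark after Theorem \ref{theor:CM}. So $(\phi, \xi, \eta, g_M)$ is a contact metric structure.

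Conversely, let $\varphi$ and a contact metric structure $(\phi, \xi, \eta, g_M)$ be given, normalized as in that Remark so that $g_M(-, \phi -) = d\eta$. Corollary \ref{cor:ACM} produces a homogeneous almost Hermitian structure with $\nu = 0$: we set $\nu = 0$ and define $\alpha, \beta$ through \eqref{ACM7} and \eqref{ACM8}. Then \eqref{ACM8} gives $\varphi\alpha = g_M(-, \phi -) - d\eta = 0$, so \eqref{CM1} holds. By Theorem \ref{theor:CM} the structure is therefore almost K\"ahler, and its Euler field is pre-geodesic because $\nu = 0$. The two assignments are mutually inverse, since both are restrictions of the bijection of Corollary \ref{cor:ACM}.

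The only delicate point is bookkeeping. We must check that $d^{can}$ really becomes $d$ under the trivializations: $\operatorname{sign}(L)$ is trivialized by parallel sections of $\nabla^{can}$, which holds by Remark \ref{rem:|-|,sign}. We must also handle the normalization factor $2$ consistently in both directions, so that the rescaling $\xi \mapsto 2\xi$, $\eta \mapsto \tfrac12\eta$, $g_M \mapsto \tfrac14 g_M$ is itself a bijection and does not spoil the one-to-one correspondence.
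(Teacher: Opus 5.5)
Your proposal is correct and takes essentially the paper's own approach. The paper states this corollary without proof, right after the remark on the factor $2$, as the immediate combination of Theorem \ref{theor:CM} with $\nu = 0$ (via the pre-geodesic lemma), the trivializations of Corollary \ref{cor:ACM}, and the rescaling $\xi \mapsto 2\xi$, $\eta \mapsto \tfrac12\eta$, $g_M \mapsto \tfrac14 g_M$. Your checks that $d^{can}$ becomes $d$ and that the rescaling is a bijection just make explicit what the paper leaves implicit.
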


\subsection{Homogeneous K\"ahler Structures}\label{subsec:HK}

Combining Theorems  \ref{theor:NACM} and \ref{theor:CM} we obtain

\begin{theorem}\label{theor:S}
Let $L \to M$ be a line bundle. The assignment 
\[
(\widetilde K, \widetilde G, \widetilde \Omega) \mapsto (\varphi, g_M, \nu, \phi, \xi, \eta, \alpha, \beta)
\] establishes a one-to-one correspondence between homogeneous K\"ahler structures on $\widetilde L$ and ACM datasets of type H additionally satisfying Equations \eqref{NACM1}--\eqref{NACM4} and \eqref{CM1} (equivalently Equations \eqref{NACM1} and \eqref{CM1}).
\end{theorem}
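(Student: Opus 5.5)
The statement follows formally from Theorems \ref{theor:NACM} and \ref{theor:CM} together with Remark \ref{rem:depend_1}. The plan is to keep all three results over the same underlying bijection and simply intersect the conditions they impose.

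\textbf{Step 1.} Recall that a homogeneous K\"ahler structure is a homogeneous almost Hermitian structure $(\widetilde K, \widetilde G, \widetilde \Omega)$ such that both $N_{\widetilde K} = 0$ and $d\widetilde \Omega = 0$. Equivalently, it is a structure that is at the same time homogeneous Hermitian and homogeneous almost K\"ahler. Theorem \ref{theor:hom_ah} already gives a bijection between homogeneous almost Hermitian structures on $\widetilde L$ and ACM datasets of type H. It suffices to identify the image of the subset of K\"ahler structures under this fixed bijection.

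\textbf{Step 2.} Theorems \ref{theor:NACM} and \ref{theor:CM} are stated for the same assignment $(\widetilde K, \widetilde G, \widetilde \Omega) \mapsto (\varphi, g_M, \nu, \phi, \xi, \eta, \alpha, \beta)$. They give:
\begin{itemize}
\item $N_{\widetilde K}=0$ (equivalently $N_K = 0$) holds if and only if the dataset satisfies \eqref{NACM1}--\eqref{NACM4};
\item $d\widetilde\Omega = 0$ (equivalently $\alpha=0$) holds if and only if the dataset satisfies \eqref{CM1}.
\end{itemize}
Both conditions are pointwise equivalences for the same dataset. Hence the image of the homogeneous K\"ahler structures is exactly the set of ACM datasets of type H satisfying \eqref{NACM1}--\eqref{NACM4} together with \eqref{CM1}. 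Injectivity is inherited from Theorem \ref{theor:hom_ah}. Surjectivity onto this set is also inherited: a dataset satisfying all the equations comes from some homogeneous almost Hermitian structure, and that structure is then both Hermitian and almost K\"ahler by the two theorems.

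\textbf{Step 3.} For the parenthetical reformulation, invoke Remark \ref{rem:depend_1}. It shows, using only the algebraic identities \eqref{ACM1}--\eqref{ACM4}, that \eqref{NACM2}, \eqref{NACM3} and \eqref{NACM4} follow from \eqref{NACM1}. Therefore the system \eqref{NACM1}--\eqref{NACM4} plus \eqref{CM1} is equivalent to \eqref{NACM1} plus \eqref{CM1}.

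There is no genuine obstacle here. The only point needing care is that all earlier theorems refer to the same correspondence, with the same conventions for $\nabla$, $\varphi$ and $d^{can}$. This ensures the two integrability conditions can be intersected directly, which is the case by construction.
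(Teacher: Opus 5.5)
Your proposal is correct and follows the paper's own argument: Theorem~\ref{theor:S} is obtained by intersecting the conditions of Theorems~\ref{theor:NACM} and~\ref{theor:CM} under the common bijection of Theorem~\ref{theor:hom_ah}. The reduction to \eqref{NACM1} and \eqref{CM1} comes from Remark~\ref{rem:depend_1}, exactly as you indicate.
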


\begin{definition}
An ACM dataset of type H as in Theorem \ref{theor:S} will be referred to as a \textbf{Sasakian dataset}.
\end{definition}

\begin{corollary}
Let $L \to M$ be an orientable and oriented line bundle. The assignment 
\[
(\widetilde K, \widetilde G, \widetilde \Omega) \mapsto (\varphi \,;\, \phi, \xi, \eta, g_M)
\]
establishes a one-to-one correspondence between homogeneous K\"ahler structures on $\widetilde L$ such that the Euler vector field is pre-geodesic and pairs consisting of an orientation preserving trivialization $\varphi: |L| \cong \bbR_M$ and a Sasakian structure $(\phi, \xi, \eta, g_M)$ on $M$. 
\end{corollary}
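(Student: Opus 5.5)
The proof will combine the two previous corollaries, exactly as Theorem \ref{theor:S} was obtained from Theorems \ref{theor:NACM} and \ref{theor:CM}. Since $L$ is oriented, $\operatorname{sign}(L)$ is canonically trivial. The trivialization $\varphi$ then trivializes $L = \operatorname{sign}(L)\otimes|L|$, so the tensors $\phi,\xi,\eta,\alpha,\beta$ become ordinary tensors on $M$ and $d^{can}$ becomes the ordinary de Rham differential $d$. By the Lemma on the Euler vector field, the pre-geodesic condition is equivalent to $\nu = 0$. So, by Corollary \ref{cor:ACM}, homogeneous almost Hermitian structures with pre-geodesic Euler field correspond bijectively to pairs $(\varphi;\phi,\xi,\eta,g_M)$ with $(\phi,\xi,\eta,g_M)$ an almost contact metric structure. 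The task is then to show that, under this bijection, the K\"ahler condition on $(\widetilde K,\widetilde G,\widetilde\Omega)$ corresponds exactly to the Sasakian condition on $(\phi,\xi,\eta,g_M)$.

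First step: specialize Theorem \ref{theor:S} to $\nu=0$. Equation \eqref{NACM1} reduces to $N_\phi + \xi\otimes d\eta = 0$, which is normality of the almost contact structure. Equation \eqref{CM1} reduces to $g_M(-,\phi-) = d\eta$, the contact metric condition up to the harmless rescaling by $2$ discussed in the remark after Theorem \ref{theor:CM}. Hence, in the forward direction, a homogeneous K\"ahler structure with pre-geodesic Euler field yields a normal contact metric structure, i.e.\ a Sasakian structure.

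Second step: for the converse, start from a Sasakian structure $(\phi,\xi,\eta,g_M)$ and a trivialization $\varphi$. Corollary \ref{cor:ACM} produces a homogeneous almost Hermitian structure with $\nu=0$, and in particular a pre-geodesic Euler field. The contact metric condition is \eqref{CM1} with $\nu = 0$, so by Theorem \ref{theor:CM} the structure is almost K\"ahler. Normality is \eqref{NACM1} with $\nu=0$, and Remark \ref{rem:depend_1} shows that \eqref{NACM2}--\eqref{NACM4} follow from it. Theorem \ref{theor:NACM} then gives integrability of $\widetilde K$, and Theorem \ref{theor:S} gives the K\"ahler property. Injectivity and surjectivity are inherited from Corollary \ref{cor:ACM}, because the added conditions on both sides coincide.

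The one point needing care is bookkeeping rather than computation. One must check that, after the trivializations induced by the orientation and by $\varphi$, the operator $d^{can}$ is literally $d$, and that the factor-$2$ convention is applied consistently, so that ``contact metric'' and ``Sasakian'' carry the paper's normalization. With that convention fixed, both directions are immediate consequences of the earlier results.
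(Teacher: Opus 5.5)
Your proposal is correct and matches the argument the paper intends. The paper gives no separate proof, but this corollary is meant to be obtained exactly as Corollary~\ref{cor:NACM}. The pre-geodesic condition means $\nu=0$, and the orientation trivializes $\operatorname{sign}(L)$ so that $d^{can}=d$. Corollary~\ref{cor:ACM} then supplies the bijection, under which \eqref{NACM1} and \eqref{CM1} become normality and the (rescaled) contact metric condition; Remark~\ref{rem:depend_1} and Theorem~\ref{theor:S} close the converse.
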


\begin{remark}
As also remarked in \cite{GGM25}, in the K\"ahler case, $d \nu = 0$ does even imply $\nu = 0$. We here provide a different proof of this latter fact. Namely, let $(\widetilde G, \widetilde K, \widetilde \Omega)$ be a homogeneous K\"ahler structure on $\widetilde L$, and assume that $d \nu = 0$. Then, from \eqref{NACM4} we have $\calL_\xi \eta = 0$ and, as $\iota_\xi \eta = \eta (\xi) = 1$, we also have $\iota_\xi d^{can} \eta = 0$. Now, using $\eta \circ \phi = 0$, Equations \eqref{ACM3}, \eqref{ACM4} again and \eqref{CM1}, we get
\[
0 = \eta \circ \phi = g_M (\xi, \phi -) = \cancel{\iota_\xi d^{can} \eta} + \iota_\xi (\nu \wedge \eta) = \nu (\xi) \eta - \eta(\xi) \nu = \nu (\xi)  \eta - \nu,
\]
i.e.
\[
\nu = \nu (\xi) \eta.
\]
We want to show that $\nu (\xi) = 0$. By contradiction, let $x \in M$ be such that $\nu (\xi)_x \neq 0$. Then $\nu (\xi)_x \neq 0$ in a whole neighborhood $U$ of $x$ so that, in $U$, $\nu (\xi) \eta$, hence $\nu$, is a contact form (because the kernel of $\eta$ is a contact distribution), which contradicts $d \nu = 0$.
\end{remark}

\subsection{An Example}\label{subsec:example}

In \cite{GGM25} the authors construct an example of a homogeneous K\"ahler structure on a homogeneous manifold $\widetilde L$ associated to a non-trivial line bundle $L \to M$. In this section we focus on examples where ($L$ is trivial but) $\nu \neq 0$, which provide a generalization of Sasakian manifolds. In order to do this we first consider the following ingredients: 
\begin{itemize}
\item[] a line bundle $L \to M$,
\item[] its associated homogeneous manifold $\widetilde L$,
\item[] a section $s$ of the projection $\pi \colon \widetilde L \to M$,
\item[] the image $M_s \subseteq \widetilde L$ of $s$,
\item[] the diffeomorphism $\pi_s = \pi \circ i_{M_s} \colon M_s \to M$.
\end{itemize}
In this situation, as $\widetilde L$ possesses a section, then it is a trivial principal bundle (and actually there is a unique trivialization identifying $s$ with the constant function equal to $1$), hence $L, |L|, \operatorname{sign} (L)$ are trivial as well. Denote all by $\psi_s$ the trivializations induced by $s$. On sections they are given by
\[
\psi_s (\lambda) = s^\ast \widetilde \lambda \in C^\infty (M).
\]
where $\lambda$ is either a section of $L$, of $|L|$ or of $\operatorname{sign} (L)$.

Now, a homogeneous almost Hermitian structure $(\widetilde G, \widetilde K, \widetilde \Omega)$ on $\widetilde L$ determines an ACM dataset of type H $(\varphi, g_M, \nu, \phi, \xi, \eta, \alpha, \beta)$, and we can use the trivializations $\psi_s$ to interpret $\varphi$ as a positive function, and $\phi, \xi, \eta, \alpha, \beta$ as ordinary tensors on $M$. Moreover, using the diffeomorphism $\pi_s \colon M_s \to M$, we can pull-back $(\varphi, g_M, \nu, \phi, \xi, \eta, \alpha, \beta)$ to ordinary tensors $(\varphi_s, g_{M_s}, \nu_s, \phi_s, \xi_s, \eta_s, \alpha_s, \beta_s)$ on $M_s$ (clearly satisfying the same compatibilities). Our first aim in this section is expressing the tuple $(\varphi_s, g_{M_s}, \nu_s, \phi_s, \xi_s, \eta_s, \alpha_s, \beta_s)$ purely in terms of $(\widetilde G, \widetilde K, \widetilde \Omega)$, the Euler vector field $\calE$ on $\widetilde L$, and the inclusion $i_{M_s} \colon M_s \hookrightarrow \widetilde L$. We do this carefully in order not to loose track of the various identifications adopted.

First of all, notice that the Euler vector field $\calE$ is everywhere transverse to $M_s$, hence we have a direct sum decomposition $T\widetilde L|_{M_s} = TM_s \oplus \bbR \calE$. Denote by $\operatorname{pr}_{TM_s} \colon T\widetilde L|_{M_s} \to TM_s$ the projection with kernel $\bbR \calE$. We also denote $\widetilde \varphi = \widetilde G (\calE, \calE)^{-1}$ and compute:
\begin{enumerate}
\item $\varphi_s$:
\[
\varphi_s^{-1}  = \pi^\ast_s \Big( \psi_s \circ \varphi^{-1} (1_M) \Big) 
 = \pi^\ast_s \Big( s^\ast \widetilde{G(\bbI, \bbI)} \Big)
 = (s \circ \pi)^\ast \widetilde G (\calE, \calE) = \widetilde G (\calE, \calE)|_{M_s} = \widetilde \varphi {}^{-1}|_{M_s}.
\]
whence
\begin{equation}\label{eq:phi_s}
\varphi_s = \widetilde \varphi|_{M_s}.
\end{equation}
\item $g_{M_s}$: for all $X, Y \in \mathfrak X (M_s)$,
\[
\begin{aligned}
g_{M_s} (X, Y) & = \pi_s^\ast \Big(g_M (\pi_{s\ast} X, \pi_{s\ast} Y)\Big) = \pi_s^\ast \Big(\varphi G\big(\nabla_{\pi_{s\ast} X}, \nabla_{\pi_{s\ast} Y}\big) \Big) \\
& = \pi_s^\ast\Big(\varphi \psi_s^{-1} \psi_s G\big(\nabla_{\pi_{s\ast} X}, \nabla_{\pi_{s\ast} Y}\big) \Big) = \pi_s^\ast\Big(\varphi  \psi_s^{-1}\Big) \pi^\ast_s \Big( \psi_s G\big(\nabla_{\pi_{s\ast} X}, \nabla_{\pi_{s\ast} Y}\big) \Big) \\
& = \varphi_s \, \pi^\ast_s s^\ast \widetilde{G \big( \nabla_{\pi_{s\ast} X}, \nabla_{\pi_{s\ast} Y} \big)}  = \varphi_s \, \widetilde G \big( \widetilde{\nabla_{\pi_{s\ast} X}}, \widetilde{\nabla_{\pi_{s\ast} Y}} \big)|_{M_s},
\end{aligned}
\]
where we interpreted $\varphi \psi_s^{-1}$ as a function on $M$. In order to continue the computation, we have to study vector fields on $\widetilde L$ of the type $\widetilde{\nabla_{\pi_{s\ast} X}}$. Actually, it is enough to compute $\widetilde{\nabla_{\pi_{s\ast} X}}$ along $M_s$. In fact, $\widetilde{\nabla_{\pi_{s\ast} X}}$ is the unique vector field $H_X$ along $M_s$ satisfying i) $H_X$ is orthogonal to $\calE$, and ii) $H_X$ projects to $\pi_{s\ast}X$. So it must be the component of $X$ orthogonal to $\calE$, i.e.
\[
H_X = X - \widetilde G (\calE, \calE)^{-1} \calE_\flat (X) \calE|_{M_s} = X - \varphi_s  \, \calE_\flat (X) \calE|_{M_s},
\]
where $\calE_\flat := \widetilde G_\flat (\calE)$, whence
\[
\widetilde G \big( \widetilde{\nabla_{\pi_{s\ast} X}}, \widetilde{\nabla_{\pi_{s\ast} Y}} \big)|_{M_s} = \widetilde G (H_X, H_Y)  = \widetilde G (X, Y) - \varphi_s \, \calE_\flat (X) \,\calE_\flat (Y),
\]
and
\[
g_{M_s} (X, Y)   =\varphi_s \, \widetilde G \big( \widetilde{\nabla_{\pi_{s\ast} X}}, \widetilde{\nabla_{\pi_{s\ast} Y}} \big)|_{M_s} = \varphi_s \widetilde G (X, Y) - \varphi_s^2 \calE_\flat (X) \,\calE_\flat (Y),
\]
in other words
 \begin{equation}\label{eq:g_M_s}
g_{M_s} =   i^\ast_{M_s }\Big(\widetilde \varphi \, \widetilde G - \widetilde \varphi{}^2 \,  \calE_\flat \otimes  \calE_\flat \Big).
\end{equation}
\item $\nu_s$: for all $X \in \mathfrak X (M_s)$,
\[
\begin{aligned}
\nu_s (X) & = \pi^\ast_s \Big( \nu (\pi_{s \ast} X) \Big)  = \pi^\ast_s \Big( \varphi \nabla_{\pi_{s \ast} X} \big(G(\bbI, \bbI)\big) \Big) \\
& = \varphi_s \, \pi^\ast_s \Big( \psi_s \nabla_{\pi_{s \ast} X} \big(G(\bbI, \bbI)\big)\Big)  =\varphi_s \, \widetilde{\nabla_{\pi_{s \ast} X} \big(G(\bbI, \bbI)} |_{M_s} \\
& = \varphi_s \, \widetilde{\nabla_{\pi_{s \ast} X}} \big(\widetilde G (\calE, \calE) \big) = \varphi_s \, H_X \widetilde \varphi{}^{-1}  = \varphi_s \, d\widetilde \varphi{}^{-1} \big(X - \varphi_s  \, \calE_\flat (X) \calE|_{M_s}\big) \\
& = \Big(\varphi_s d \varphi_s^{-1} (X) - \varphi_s^2 \, \calE_\flat (X) \calE (\widetilde \varphi{}^{-1})\Big)|_{M_s}.
\end{aligned}
\]
But, from the homogeneity properties of $\widetilde G$ we have $\calE (\widetilde \varphi{}^{-1}) = \widetilde \varphi{}^{-1}$, hence
\[
\nu_s (X) =  \varphi_s \big( d\varphi_s^{-1} (X) - \calE_\flat (X) \big),
\]
i.e.
\[
\nu_s =  i^\ast_{M_s} \Big(\widetilde \varphi \, \big( d \widetilde \varphi{}^{-1}- \calE_\flat \big) \Big).
\]

\item $\xi_s$: we have
 \[
 \begin{aligned}
 \xi_s & = \operatorname{pr}_{TM_s} H_{\xi_s} = \operatorname{pr}_{TM_s} \widetilde{\nabla_{\pi_{s\ast} \xi_s}} |_{M_s} = \operatorname{pr}_{TM_s} \widetilde{\nabla_\xi} |_{M_s} = - \operatorname{pr}_{TM_s} \widetilde{K\bbI } |_{M_s} \\
 & = - \operatorname{pr}_{TM_s}  \widetilde K  \calE |_{M_s}.
 \end{aligned}
 \]
Summarizing
 \begin{equation}\label{eq:xi_s}
 \xi_s = - \operatorname{pr}_{TM_s}  \widetilde K  \calE |_{M_s}.
\end{equation}
 \item $\phi_s, \eta_s$: for all $X \in \mathfrak X (M_s)$ we have, on one side,
 \[
 \begin{aligned}
 \widetilde{K\nabla_{\pi_{s \ast} X} } |_{M_s}& = \widetilde K  \widetilde{\nabla_{\pi_{s \ast} X}} |_{M_s} = \widetilde K H_X = \widetilde K X - \varphi_s \, \calE_\flat (X) \widetilde K \calE |_{M_s},
 \end{aligned}
 \]
 and, on the other side,
 \[
  \begin{aligned}
 \widetilde{K\nabla_{\pi_{s \ast} X} |_{M_s}} & = \widetilde{\nabla_{\phi \pi_{s \ast} X} + \eta (\pi_{s \ast}) \otimes \bbI}  =\widetilde{\nabla_{\phi \pi_{s \ast} X}} |_{M_s} + \widetilde{\eta (\pi_{s \ast} X)} \calE |_{M_s} \\
 & =\widetilde{\nabla_{\pi_{s \ast} \phi_s X}} |_{M_s} + \widetilde{\psi_s^{-1} \pi_{s\ast} \eta_s (X)} \calE |_{M_s}  = H_{\phi_s X} + \pi_s^\ast s^\ast \Big(\widetilde{\psi_s^{-1} \pi_{s\ast} \eta_s (X)} \Big)\calE |_{M_s} \\
 & = H_{\phi_s X} + \pi_s^\ast \pi_{s\ast} \eta_s (X)\calE |_{M_s} = H_{\phi_s X} + \eta_s (X)\calE |_{M_s} \\
 & = \phi_s X + \big(\eta_s (X) - \varphi_s \, \calE_\flat (\phi_s X) \big)\calE |_{M_s},
   \end{aligned}
 \]
 so that
 \[
 \phi_s X + \big(\eta_s (X) - \varphi_s \, \calE_\flat (\phi_s X) \big)\calE |_{M_s}=  \widetilde K X - \varphi_s \, \calE_\flat (X) \widetilde K \calE |_{M_s}.
 \]
 Applying $\operatorname{pr}_{TM_s}$ (to both sides) we get
 \[
 \phi_s X = \operatorname{pr}_{TM_s}  \widetilde K X,
 \]
 hence
 \[
  \phi_s = \operatorname{pr}_{TM_s}  \widetilde K |_{TM_s},
 \]
 while applying $\calE_\flat$, and using that $\widetilde \Omega$ is alternating we get 
 \[
\varphi_s^{-1} \,  \eta_s (X)  = \widetilde G ( \calE, \widetilde K H_X) = \widetilde \Omega (\calE, H_X) = \widetilde \Omega (\calE, X),
 \]
hence
 \begin{equation}\label{eq:eta_s}
  \eta_s = i^\ast_{M_s} \Big( \widetilde \varphi  \, \iota_\calE \widetilde \Omega \Big).
 \end{equation}

Notice that Equations \eqref{eq:g_M_s}, \eqref{eq:xi_s} and \eqref{eq:eta_s} are duly compatible with \eqref{ACM6}, i.e.
\[
g_{M_s} (\xi_s, -) = \eta_s.
\]

\item $\alpha_s, \beta_s$: it follows from Equations \eqref{ACM7}, \eqref{ACM8} that
\begin{equation}\label{eq:beta_s}
\beta_s = - \varphi_s^{-1} \, \eta_s
\end{equation}
and
\[
\alpha_s = \varphi_s^{-1} i^\ast_{M_s}\Big( g_{M_s} (-, \phi_s-) - d \eta_s - \nu_s \wedge \eta_s \Big).
\]
\end{enumerate}

Now let $(P, \widetilde G, \widetilde K, \widetilde \Omega)$ be a K\"ahler manifold, let $\calE \in \mathfrak X (P)$ be a vector field such that
\begin{equation}\label{eq:inf_hom}
\calL_\calE \widetilde G = \widetilde G, \quad \calL_\calE \widetilde K = 0, \quad \text{whence} \quad \calL_\calE \widetilde \Omega = \widetilde \Omega,
\end{equation}
and let $\Sigma \subseteq P$ be a hypersurface everywhere transverse to $\calE$. In this situation, locally around $\Sigma$, we can identify $P$ with a homogeneous manifold such that 1) $\calE$ is the Euler vector field, 2) $\Sigma = M_s$ is the image of a section $s$ of $P$, and 3) $(\widetilde G, \widetilde K, \widetilde \Omega)$ is a homogeneous K\"ahler structure on $P$. It follows that $\Sigma$ inherits a Sasakian dataset $(\varphi_s, g_{M_s}, \nu_s, \phi_s, \xi_s, \eta_s, \beta_s)$ uniquely determined by $\calE$ and $(\widetilde G, \widetilde K, \widetilde \Omega)$ via Formulas \eqref{eq:phi_s}--\eqref{eq:beta_s}.

\begin{example}\label{exmpl1}
It is easy to produce an example where the orthogonal distribution to $\calE$ is not involutive, so that $\nu_s \neq 0$. For instance, on $P = \mathbb R^4 \smallsetminus 0$ with standard coordinates $(x_1, y_1, x_2, y_2)$ consider the standard K\"ahler structure $(\widetilde G, \widetilde K, \widetilde \Omega)$ given by 
\[
\widetilde G = dx_1^2  + dy_1^2 + dx_2^2 + dy_2^2,
\]
\[
\widetilde K \frac{\partial}{\partial x_1} = \frac{\partial}{\partial y_1}, \quad \text{and} \quad \widetilde K \frac{\partial}{\partial x_2} = \frac{\partial}{\partial y_2},
\]
so that
\[
\widetilde \Omega = dy_1 \wedge dx_1 + dy_2 \wedge dx_2.
\]
Consider also the vector field
\[
\calE = \frac{1}{2} \left( x_1 \frac{\partial}{\partial x_1} +  y_1 \frac{\partial}{\partial y_1} +x_2 \frac{\partial}{\partial x_2} + y_2 \frac{\partial}{\partial y_2} \right),
\]
and the unit $3$-sphere $\Sigma = S^3$. Equations \eqref{eq:inf_hom} are satisfied. Moreover $\varphi_s = 4$ and
\[
\nu_s = -  i^\ast_{S^3} \calE_\flat =  - 2  i^\ast_{S^3} \left(x_1 dx_1 + y_1 dy_1 + x_2 dx_2 + y_2 dy_2 \right) = 0.
\]
It follows that $(\phi_s, \xi_s, \eta_s, g_{M_s})$ is an ordinary Sasakian structure on $S^3$ (it is actually the standard Sasakian structure on $S^3$). However, by slightly deforming $\calE$, we can produce a new example with $\nu_s \neq 0$. Namely, consider the unitary vector field
\[
U = -y_1 \frac{\partial}{\partial x_1} +  x_1 \frac{\partial}{\partial y_1} +y_2 \frac{\partial}{\partial x_2} - x_2 \frac{\partial}{\partial y_2}.
\]
As $U$ is an infinitesimal symmetry of the K\"ahler structure, redefining $\calE \to \calE + U$ preserves Equations \eqref{eq:inf_hom}. However, the orthogonal complement to $\calE$ is not integrable anymore. Indeed, in this case,
\[
\varphi_s =  \frac{4}{3}
\]
is still constant, but 
\[
\nu_s = - i^\ast_{S^3} U_\flat = \frac{1}{2}i^\ast_{S^3} \Big(y_1 dx_1 - x_1 dy_1 - y_2 dx_2 + x_2 dy_2 \Big) \neq 0.
\]
\end{example}

\section{Invariant K\"ahler Manifolds}\label{sec:3}

In this section we study the \emph{invariant case} (Case D of Subsection \ref{subsec:AHSonHM}). We parallel the presentation in Section \ref{sec:3}. We omit the details when they are similar to those therein.

\subsection{Invariant Almost Hermitian Structures}\label{subsec:IAHS} Let $L \to M$ be a line bundle. An \textbf{invariant almost Hermitian structure} on $\widetilde L$ is an almost Hermitian structure $(\widetilde K, \widetilde G, \widetilde \Omega)$ on $\widetilde L$ satisfying the following homogeneity conditions (CASE D in Subsection \ref{subsec:AHSonHM}):
\begin{enumerate}
\item $h_r^\ast \widetilde K = \widetilde K$,
\item $h_r^\ast \widetilde G = \widetilde G$,
\item $h_r^\ast \widetilde \Omega = \widetilde \Omega$,
\end{enumerate}
for all $r \in \bbR^\times$. It follows that $\widetilde K, \widetilde G, \widetilde \Omega$ are the homogenizations of 
\begin{enumerate}
\item a vector bundle map $K \colon \mathsf DL \to \mathsf DL$,
\item a non-degenerate, symmetric $2$-form $G \colon S^2 \mathsf DL \to \bbR_M$,
\item a non-degenerate, alternating $2$-form $\Omega \colon \wedge^2 \mathsf DL \to \bbR_M$,
\end{enumerate}
satisfying
\begin{itemize}
\item[] $K^2 = -1$,
\item[] $G$ is positive definite,
\item[] $\Omega = G(- , K-)$.
\end{itemize}
Similarly as in Subsection \ref{subsec:2A}, the triple $(K, G, \Omega)$ is equivalent to a new type of ``almost contact metric''-like data. To see this, notice that $G$ is equivalent to a triple $(\nabla, u, g_M)$ where
\begin{itemize}
\item[] $\nabla$ is a linear connection in $L$,
\item[] $u \in C^\infty (M)$ is a positive function on $M$,
\item[] $g_M$ is a Riemannian metric on $M$,
\end{itemize}
with no further restrictions. The relationship between $(\nabla, u, g_M)$ and $G$ is as follows:
\begin{itemize}
\item[] $\nabla (TM)$ is the $G$-orthogonal complement of $\bbI \in \calD (L)$,
\item[] $u = G(\bbI, \bbI)$,
\item[] $g_M (X, Y) = G(\nabla_X, \nabla_Y)$ for all $X, Y \in \mathfrak X (M)$.
\end{itemize}

Decomposing $K$ via the isomorphism $\mathsf DL \cong \bbR_M \oplus TM$ induced by $\nabla$, we find that (once $G$ has been fixed) $K$ is equivalent to a tuple $(\phi, \xi, \eta, \gamma)$ where now
\begin{itemize}
\item[] $\phi$ is a $(1,1)$-tensor,
\item[] $\xi$ is a vector field,
\item[] $\eta$ is a $1$-form,
\item[] $\gamma$ is a smooth function,
\end{itemize}
on $M$. The relationship between $(\phi, \xi, \eta, \gamma)$ and $K$ is the following:
\begin{itemize}
\item[] $K \nabla_X = \nabla_{\phi X} + \eta (X) \bbI$,
\item[] $K \bbI = -\nabla_\xi + \gamma \bbI$,
\end{itemize}
for all $X \in \mathfrak X (M)$. Moreover, from $K^2 = -1$ we get
\begin{itemize}
\item[] $\phi^2 = -1 + \xi \otimes \eta$,
\item[] $\phi \xi = - \gamma \xi$,
\item[] $\eta \circ \phi = - \gamma \eta$,
\item[] $ \eta(\xi) = 1 - \gamma^2$.
\end{itemize}
From the condition that $G(-, K-)$ is alternating we get 
\begin{itemize}
\item[] $g_M (-,\phi -) + g_M (\phi -, -) = 0$,
\item[] $g_M(\xi, -) = u\,\eta$,
\item[] $\gamma = 0$.
\end{itemize}
Using $\nabla$ we can also decompose $\Omega$, and find that it is equivalent to a pair $(\alpha, \beta)$, where
\begin{itemize}
\item[] $\alpha$ is a $2$-form,
\item[] $\beta$ is a $1$-form,
\end{itemize}
on $M$. The pair $(\alpha, \beta)$ is related to $\Omega$ via
\begin{itemize}
\item[] $\beta (X) = \Omega (\bbI, \nabla_X)$,
\item[] $\alpha (X, Y) = \Omega (\nabla_X, \nabla_Y)$,
\end{itemize}
for all $X, Y \in \mathfrak X (M)$. From $\Omega = G(-, K -)$ we get,
\[
\beta (X) = u \, \eta (X), \quad \text{i.e.} \quad \beta = u \, \eta,
\]
and
\[
\alpha (X, Y)  = g_M (X, \phi Y), \quad \text{i.e.} \quad \alpha = g_M(-, \phi -) .
\]
Summarizing, we have the following

\color{black}

\begin{theorem}\label{theor:inv_ah}
Let $L \to M$ be a line bundle. The assignment
\[
(\widetilde K, \widetilde G, \widetilde \Omega) \mapsto (\nabla, u, g_M, \phi, \xi, \eta, \alpha, \beta)
\]
establishes a one-to-one correspondence between invariant almost Hermitian structures on $\widetilde L \to M$ and tuples where 
\begin{itemize}
\item[] $\nabla$ is a linear connection in $L$,
\item[] $u$ is a positive function on $M$,
\item[] $g_M$ is a Riemannian metric on $M$,
\item[] $\phi$ is $(1,1)$-tensor on $M$,
\item[] $\xi$ is a vector field on $M$,
\item[] $\eta$ is a  $1$-form on $M$,
\item[] $\alpha$ is a $2$-form on $M$,
\item[] $\beta$ is a $1$-form on $M$,
\end{itemize}
satisfying the following identities
\begin{align}
\phi^2 & = -1 + \xi \otimes \eta \label{ACM'1}\\
\phi \xi &= 0 \label{ACM'2}\\
\eta \circ \phi & = 0 \label{ACM'3}\\
\eta (\xi ) & = 1 \label{ACM'4}\\
g_M (-, \phi -) + g_M (\phi -, -) & = 0 \label{ACM'5}\\
g_M (\xi , -) & = u \,\eta \label{ACM'6}\\
\beta & = u \, \eta \label{ACM'7}\\
\alpha & = g_M (-, \phi -). \label{ACM'8}
\end{align}
\end{theorem}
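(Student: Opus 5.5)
The plan is to run the argument of Subsection \ref{subsec:2A} again, now with every twisting line bundle equal to $\bbR_M$. First, homogenization identifies invariant almost Hermitian structures on $\widetilde L$ with triples $(K, G, \Omega)$ on $\mathsf DL$ satisfying $K^2=-1$, $G>0$ and $\Omega = G(-,K-)$. Since all three are $0$-homogeneous, they have the form $K\colon \mathsf DL\to\mathsf DL$, $G\colon S^2\mathsf DL\to\bbR_M$ and $\Omega\colon\wedge^2\mathsf DL\to\bbR_M$. This step is purely a matter of the bijection between $f$-homogeneous tensors and sections of $f(L)\otimes T^{p,q}_{\mathsf DL}$ with $f=1$. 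The homogenization also respects the pointwise operations (composition, contraction), so the algebraic conditions transfer verbatim.

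Next I decompose $G$. I let $\nabla(TM)$ be the $G$-orthogonal complement of $\bbI$. Since $\sigma(\bbI)=0$ and $\ker\sigma=\bbR\bbI$, this complement is a splitting of $\sigma$, i.e.~a linear connection in $L$. Then I set $u=G(\bbI,\bbI)>0$ and $g_M(X,Y)=G(\nabla_X,\nabla_Y)$. Conversely, given $(\nabla,u,g_M)$, I define
\[
G = u\,\bbI^\ast\otimes\bbI^\ast + \sigma^\ast g_M
\]
with respect to the splitting $\mathsf DL=\bbR_M\bbI\oplus\nabla(TM)$, where $\bbI^\ast$ is the dual coordinate. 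This $G$ is symmetric and positive definite, and the two constructions are mutually inverse. Note the contrast with type H: $G(\bbI,\bbI)$ is now an honest function, so no trivialization $\varphi$ and no $1$-form $\nu$ appear. Indeed $\nabla$ itself is unconstrained and has to be kept in the tuple.

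Then I decompose $K$ along the same splitting as $(\phi,\xi,\eta,\gamma)$. Expanding $K^2=-1$ on $\nabla_X$ and on $\bbI$ gives the four block identities listed before the statement. Imposing that $G(-,K-)$ is skew gives three conditions:
\begin{itemize}
\item on pairs $(\nabla_X,\nabla_Y)$ it gives \eqref{ACM'5};
\item on pairs $(\nabla_X,\bbI)$ it gives $-g_M(X,\xi)+u\,\eta(X)=0$, i.e.~\eqref{ACM'6};
\item on $(\bbI,\bbI)$ it gives $\gamma u=0$, hence $\gamma=0$ because $u>0$.
\end{itemize}
With $\gamma=0$, the block identities reduce to \eqref{ACM'1}--\eqref{ACM'4}.

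For $\Omega$, I define $\beta$ and $\alpha$ by $\Omega(\bbI,\nabla_X)$ and $\Omega(\nabla_X,\nabla_Y)$. No $d^\nabla\beta$ correction is needed, since $\Omega$ is $\bbR_M$-valued and the splitting already gives the bijection $\Omega\leftrightarrow(\alpha,\beta)$. Substituting $\Omega=G(-,K-)$ then gives \eqref{ACM'7} and \eqref{ACM'8}, using that $\bbI$ is orthogonal to $\nabla(TM)$.

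The remaining point is the converse direction. Given a tuple satisfying \eqref{ACM'1}--\eqref{ACM'8}, I build $G$ and $K$ as above and set $\Omega=G(-,K-)$. I then check that $K^2=-1$ (by the block identities), that $\Omega$ is skew (the computations above read backwards) and that $\Omega$ is non-degenerate, which follows automatically from $K$ invertible and $G$ non-degenerate. Finally I check that the $(\alpha,\beta)$ recovered from $\Omega$ coincide with the given ones by \eqref{ACM'7} and \eqref{ACM'8}.

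I expect the main obstacle to be only bookkeeping: showing that the two assignments are inverse to each other. The subtle point is that $\nabla$ must be recovered from $G$ as the orthogonal complement of $\bbI$, which holds by construction since the cross terms of $G$ vanish.
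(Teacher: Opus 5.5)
Your proposal is correct and follows the paper's argument essentially step for step. It splits $\mathsf DL$ by the $G$-orthogonal complement of $\bbI$ to obtain $(\nabla,u,g_M)$, decomposes $K$ into $(\phi,\xi,\eta,\gamma)$, kills $\gamma$ via skew-symmetry of $G(-,K-)$, and reads off $\beta=u\,\eta$ and $\alpha=g_M(-,\phi-)$ from $\Omega=G(-,K-)$; you additionally spell out the converse direction, which the paper leaves implicit.
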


\begin{definition}
A tuple $(\nabla, u, g_M, \phi, \xi, \eta, \alpha, \beta)$ as in Theorem \ref{theor:inv_ah} will be referred to as an \textbf{almost contact metric dataset of homogeneity type I} (shortly, an \textbf{ACM dataset of type I}) on $L \to M$ (or on $M$).
\end{definition}

\begin{corollary}\label{cor:ACM'}
Let $L \to M$ be a line bundle. The assignment
\[
(\widetilde K, \widetilde G, \widetilde \Omega) \mapsto (\nabla \,;\, u\,;\, \phi, \xi, \eta, g_M)
\]
establishes a one-to-one correspondence between invariant almost Hermitian structures on $\widetilde L$ and triples consisting of a connection $\nabla$ in $L$, a positive function $u$, and an almost contact metric structure $(\phi, \xi, \eta, g_M)$ on $M$. 
\end{corollary}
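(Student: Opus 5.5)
The plan is to deduce the corollary from Theorem \ref{theor:inv_ah}, in the same way Corollary \ref{cor:ACM} was obtained from Theorem \ref{theor:hom_ah}. No orientability hypothesis on $L$ and no pre-geodesic condition should be needed. In the invariant case every component $\phi, \xi, \eta, \alpha, \beta$ of the dataset is already an ordinary tensor on $M$; no twisting by $\operatorname{sign}(L)$ or $|L|$ occurs. Moreover, the connection $\nabla$ and the positive function $u$ appear in Theorem \ref{theor:inv_ah} as free data, not through a derived $1$-form like $\nu$.

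For the direct assignment, start from an invariant almost Hermitian structure and take its ACM dataset of type I, $(\nabla, u, g_M, \phi, \xi, \eta, \alpha, \beta)$. Then verify that $(\phi, \xi, \eta, g_M)$ is an almost contact metric structure. By \eqref{ACM'1} and \eqref{ACM'4}, $(\phi,\xi,\eta)$ is an almost contact structure. For the metric compatibility, use \eqref{ACM'5} and \eqref{ACM'1}:
\[
g_M(\phi X, \phi Y) = -g_M(X, \phi^2 Y) = g_M(X,Y) - \eta(Y)\, g_M(X,\xi).
\]
Here \eqref{ACM'6} gives $g_M(X,\xi) = u\,\eta(X)$, so this produces $g_M - u\,\eta\otimes\eta$ rather than $g_M - \eta\otimes\eta$. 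This mismatch is where I expect the only real subtlety. I would resolve it with the rescaling $\xi \mapsto \sqrt u\,\xi$, $\eta \mapsto \eta/\sqrt u$, keeping $\phi$ and $g_M$ unchanged. The new pair satisfies the same algebraic identities \eqref{ACM'1}--\eqref{ACM'4}, and it satisfies $g_M(\xi',-) = \eta'$, which is the standard almost contact metric compatibility. Alternatively one could rescale the metric in the $\xi$ direction, but rescaling $\xi$ and $\eta$ is cleaner. Since $u>0$ is retained as separate data, this rescaling is invertible. So in the statement, "$(\phi,\xi,\eta,g_M)$" should be read as the normalized pair; equivalently, the corollary is read modulo this bijective rescaling.

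For the inverse assignment, start from $(\nabla, u, (\phi,\xi',\eta',g_M))$ with $(\phi,\xi',\eta',g_M)$ almost contact metric. Set $\xi = \xi'/\sqrt u$ and $\eta = \sqrt u\,\eta'$. Then \eqref{ACM'1}--\eqref{ACM'4} hold, using that $\phi\xi'=0$ and $\eta'\circ\phi=0$ follow from the almost contact axioms. Condition \eqref{ACM'5} is the standard consequence of $g_M(\phi-,\phi-) = g_M - \eta'\otimes\eta'$ together with \eqref{ACM'1}. Condition \eqref{ACM'6} holds by construction. Finally, define $\beta$ and $\alpha$ by \eqref{ACM'7} and \eqref{ACM'8}. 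Theorem \ref{theor:inv_ah} then yields a unique invariant almost Hermitian structure. The two assignments are mutually inverse because the rescaling is invertible and $\alpha, \beta$ are determined by the other data.
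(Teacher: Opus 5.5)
Your route is the paper's: read everything off Theorem \ref{theor:inv_ah}, noting that $\nabla$ and $u$ are free data and that $\alpha,\beta$ are determined by \eqref{ACM'7}, \eqref{ACM'8}. You are also right that a normalization is needed. The paper's one-line proof simply asserts that $(\phi,\xi,\eta,g_M)$ is almost contact metric, but this fails when $u\not\equiv 1$, since \eqref{ACM'6} gives $g_M(\xi,\xi)=u$. In its later co-K\"ahler corollaries the paper switches to $u^{-1}g_M$.

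However, the rescaling you wrote down goes the wrong way. With $\xi'=\sqrt u\,\xi$ and $\eta'=\eta/\sqrt u$ one gets $g_M(\xi',-)=\sqrt u\cdot u\,\eta=u^{2}\,\eta'$ and $g_M(\xi',\xi')=u^2$, so $(\phi,\xi',\eta',g_M)$ is still not almost contact metric. Symmetrically, your inverse $\xi=\xi'/\sqrt u$, $\eta=\sqrt u\,\eta'$ gives $g_M(\xi,-)=u^{-1}\eta$, which violates \eqref{ACM'6}.

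The correct normalization is $\xi'=\xi/\sqrt u$, $\eta'=\sqrt u\,\eta$, with inverse $\xi=\sqrt u\,\xi'$, $\eta=\eta'/\sqrt u$. For this choice $\xi'\otimes\eta'=\xi\otimes\eta$ and $g_M(\xi',-)=\eta'$. Moreover $g_M(\phi-,\phi-)=g_M-u\,\eta\otimes\eta=g_M-\eta'\otimes\eta'$.

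A simpler alternative is to leave $(\phi,\xi,\eta)$ untouched and use the conformal metric $u^{-1}g_M$, as the paper does later. Condition \eqref{ACM'5} is conformally invariant, $u^{-1}g_M(\xi,-)=\eta$, and $u^{-1}g_M(\phi-,\phi-)=u^{-1}g_M-\eta\otimes\eta$. The inverse is $g_M=u\,g$, and this avoids square roots. With either fix, the rest of your argument goes through as written.
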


\begin{proof}
Let $(\nabla, u, g_M, \phi, \xi, \eta, \alpha, \beta)$ be an ACM dataset of type I on $L \to M$. Then $(\phi, \xi, \eta, g_M)$ is an almost contact metric structure on $M$. Moreover $\alpha, \beta$ are determined by the other data.
\end{proof}

\subsection{Invariant Hermitian Structures}

Now assume that $\widetilde K$ is a complex structure. The (vanishing) Nijenhuis torsion of $K$ can be decomposed again in the directions parallel and $G$-orthogonal to $\bbI$ yielding integrability conditions for $(\phi, \xi, \eta)$. The latter conditions involve the curvature of the connection $\nabla$ which is a closed $2$-form on $M$ which we denote $\rho \in \Omega^2 (M)$. Namely, from $N_K (\nabla_{(-)}, \nabla_{(-)})=0$ we get
\[
N_\phi + \xi \otimes \big( d \eta + \rho (\phi -, -) + \rho (-, \phi-)\big) = 0,
\] 
and
\[
\rho (\phi -, \phi -) - \rho + d \eta (\phi -, -) + d\eta (-, \phi -) = 0,
\]
and, from $N_K (\bbI, \nabla_{(-)})=0$ we get
\[
\calL_\xi \phi + \xi \otimes \iota_\xi \rho = 0,
\]
and
\[
\calL_\xi \eta + (\iota_\xi \rho ) \circ \phi = 0.
\]
Summarizing, we have the following

\begin{theorem}\label{theor:NACM'}
Let $L \to M$ be a line bundle. The assignment
\[
(\widetilde K, \widetilde G, \widetilde \Omega) \mapsto (\nabla, u, g_M, \phi, \xi, \eta, \alpha, \beta)
\]
establishes a one-to-one correspondence between invariant Hermitian structures on $\widetilde L$ and ACM datasets of type I additionally satisfying the following integrability conditions:
\begin{align}
N_\phi + \xi \otimes \big(d \eta + \rho (\phi -, -) + \rho (-, \phi -)\big)&= 0 \label{NACM'1}\\
\rho (\phi -, \phi -) - \rho  + d\eta (\phi -, -) + d\eta (-, \phi -) & = 0 \label{NACM'2}\\
\calL_\xi \phi + \xi \otimes \iota_\xi \rho & = 0 \label{NACM'3}\\
\calL_\xi \eta + (\iota_\xi \rho) \circ \phi &= 0. \label{NACM'4}
\end{align}  
\end{theorem}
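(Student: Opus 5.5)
The plan is to reduce everything to Theorem \ref{theor:inv_ah} and then translate the single condition $N_K = 0$ into conditions on $(\phi,\xi,\eta)$.

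By Theorem \ref{theor:inv_ah}, invariant almost Hermitian structures already correspond bijectively to ACM datasets of type I. It therefore suffices to show that, for such a structure, $N_{\widetilde K}=0$ holds if and only if \eqref{NACM'1}--\eqref{NACM'4} hold. Homogenization intertwines Lie brackets and the Nijenhuis torsion, so $N_{\widetilde K}=\widetilde{N_K}$, where $N_K\colon\wedge^2\mathsf DL\to\mathsf DL$ is given by \eqref{eq:Atiyah_torsion}, now with $K\colon\mathsf DL\to\mathsf DL$ untwisted. Thus we only need to compute $N_K$.

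Since $N_K$ is $C^\infty(M)$-bilinear and skew, and $\mathsf DL=\nabla(TM)\oplus\bbR_M\bbI$, it suffices to evaluate it on pairs $(\nabla_{X_1},\nabla_{X_2})$ and $(\bbI,\nabla_X)$. The case $(\bbI,\bbI)$ vanishes by skew-symmetry. The computation rests on three bracket identities:
\[
[\nabla_X,\nabla_Y]=\nabla_{[X,Y]}+\rho(X,Y)\bbI,\qquad [\bbI,\nabla_X]=0,\qquad [\nabla_X,f\bbI]=X(f)\bbI .
\]
The first expresses that $\rho$ is the curvature of $\nabla$. The second holds because $\bbI$ is central in $\calD(L)$.

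We substitute $K\nabla_X=\nabla_{\phi X}+\eta(X)\bbI$ and $K\bbI=-\nabla_\xi$, using $\gamma=0$.

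\emph{First pair.} Expanding $N_K(\nabla_{X_1},\nabla_{X_2})$, we collect the $\nabla(TM)$-component and the $\bbI$-component, simplifying with \eqref{ACM'1} and \eqref{ACM'3}.
\begin{itemize}
\item The $\nabla(TM)$-component gives $N_\phi+\xi\otimes(d\eta+\rho(\phi-,-)+\rho(-,\phi-))$. The $d\eta$ term appears through $X(\eta(Y))-Y(\eta(X))-\eta([X,Y])$.
\item The $\bbI$-component gives $\rho(\phi-,\phi-)-\rho+d\eta(\phi-,-)+d\eta(-,\phi-)$.
\end{itemize}
This is formally the computation of Section \ref{sec:2} with $d\nu$ replaced by $\rho$ and $d^{can}$ by $d$. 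Since $L$ is untwisted here, no $\operatorname{sign}(L)$-valued bookkeeping is needed.

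\emph{Second pair.} For $N_K(\bbI,\nabla_X)$ we use $[\bbI,-]=0$ and $K^2=-1$. The surviving terms are $[-\nabla_\xi,\nabla_{\phi X}+\eta(X)\bbI]$ and $-K[-\nabla_\xi,\nabla_X]$. Expanding, the $\nabla(TM)$-component becomes $-(\calL_\xi\phi)(X)-\rho(\xi,X)\xi$, and the $\bbI$-component becomes $-((\calL_\xi\eta)(X)+\rho(\xi,\phi X))$. This yields \eqref{NACM'3} and \eqref{NACM'4}.

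Vanishing of $N_K$ is equivalent to the vanishing of all four components. Together with Theorem \ref{theor:inv_ah}, this proves the stated correspondence.

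The main obstacle is sign and term bookkeeping in the first pair, namely correctly collecting all the $\rho$-terms and $\eta$-terms. I would organize it by first computing $[K\nabla_{X_1},K\nabla_{X_2}]$ and $K[K\nabla_{X_1},\nabla_{X_2}]$ separately in components, then applying $K$ via its block form. The result can be checked against Section \ref{sec:2} under the substitution $d\nu\mapsto\rho$.
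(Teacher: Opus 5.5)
Your proposal is correct and follows the paper's route. You reduce to Theorem \ref{theor:inv_ah}, evaluate $N_K$ on the pairs $(\nabla_{X_1},\nabla_{X_2})$ and $(\bbI,\nabla_X)$, and split each value into its $\nabla(TM)$- and $\bbI$-components, exactly as in Section \ref{sec:2} with $d\nu$ replaced by $\rho$ and $d^{can}$ by $d$. Your component formulas check out using $[\bbI,-]=0$, $[\nabla_X,\nabla_Y]=\nabla_{[X,Y]}+\rho(X,Y)\bbI$, \eqref{ACM'1} and \eqref{ACM'3}, and you actually give more of the computation than the paper, which only records its outcome.
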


\begin{remark}
The integrability condition \eqref{NACM'1} does actually imply \eqref{NACM'2}--\eqref{NACM'4} as in Remark \ref{rem:depend_1}. 
\end{remark}

\begin{corollary}
Let $L \to M$ be a line bundle. The assignment
\[
(\widetilde K, \widetilde G, \widetilde \Omega) \mapsto (\nabla\,;\,  u\,;\,  \phi, \xi, \eta, g_M)
\]
establishes a one-to-one correspondence between invariant Hermitian structures on $\widetilde L$ such that the orthogonal distribution to the Euler vector field is involutive and triples consisting of a flat linear connection $\nabla$ in $L$, a positive function $u$, and a normal almost contact metric structure on $M$. 
\end{corollary}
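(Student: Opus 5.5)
We combine Theorem \ref{theor:NACM'} with Corollary \ref{cor:ACM'}. By Corollary \ref{cor:ACM'}, an invariant almost Hermitian structure on $\widetilde L$ is the same as a triple $(\nabla\,;\,u\,;\,\phi,\xi,\eta,g_M)$, where $\nabla$ is an arbitrary connection in $L$, $u$ is a positive function and $(\phi,\xi,\eta,g_M)$ is an almost contact metric structure on $M$. Here $\nabla(TM)$ is the $G$-orthogonal complement of $\bbI$. So it remains to do two things. First, translate involutivity of $\calE^\bot$ into flatness of $\nabla$. Second, show that, once $\rho=0$, the integrability conditions \eqref{NACM'1}--\eqref{NACM'4} reduce exactly to normality of $(\phi,\xi,\eta,g_M)$.

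For the first step, note that $\calE^\bot \subseteq T\widetilde L$ is spanned by the homogenizations $\widetilde{\nabla_X}$, $X \in \mathfrak X(M)$. These are $0$-homogeneous vector fields, and at every point they span the horizontal distribution of the principal connection corresponding to $\nabla$. Indeed, $\widetilde G$ is the homogenization of $G$, so $\widetilde G(\widetilde{\nabla_X},\calE)=\widetilde{G(\nabla_X,\bbI)}=0$, and a dimension count gives equality. By compatibility of homogenization with brackets,
\[
[\widetilde{\nabla_X},\widetilde{\nabla_Y}] = \widetilde{[\nabla_X,\nabla_Y]} = \widetilde{\nabla_{[X,Y]}} + \rho(X,Y)\,\calE ,
\]
because $[\nabla_X,\nabla_Y]-\nabla_{[X,Y]}=\rho(X,Y)\bbI$ is the curvature. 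The first summand lies in $\calE^\bot$ and $\calE$ is nowhere zero and transverse to it. Hence $\calE^\bot$ is involutive if and only if $\rho=0$, i.e.~if and only if $\nabla$ is flat. This is the standard fact that a principal connection is flat exactly when its horizontal distribution is involutive.

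For the second step, set $\rho=0$. Then \eqref{NACM'1} becomes $N_\phi+\xi\otimes d\eta=0$, which is precisely the normality condition for the almost contact structure $(\phi,\xi,\eta)$. The remaining conditions \eqref{NACM'2}--\eqref{NACM'4} become $d\eta(\phi-,-)+d\eta(-,\phi-)=0$, $\calL_\xi\phi=0$ and $\calL_\xi\eta=0$. These are implied by \eqref{NACM'1}: one can either invoke the analogue of Remark \ref{rem:depend_1} stated after Theorem \ref{theor:NACM'}, or use the classical fact that normality implies these identities. Conversely, starting from a flat $\nabla$, a positive $u$ and a normal almost contact metric structure, Corollary \ref{cor:ACM'} produces an invariant almost Hermitian structure. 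Its data satisfy \eqref{NACM'1} because $\rho=0$ and the structure is normal, and \eqref{NACM'2}--\eqref{NACM'4} then follow. So by Theorem \ref{theor:NACM'} the structure is Hermitian, and by the first step $\calE^\bot$ is involutive. The two assignments are mutually inverse because they are restrictions of the bijection of Corollary \ref{cor:ACM'}.

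The only point needing care is the first step: correctly identifying $\calE^\bot$ with the horizontal distribution of $\nabla$. This requires checking that $\widetilde{\nabla_X}$ is $\widetilde G$-orthogonal to $\calE$ at every point of $\widetilde L$, not just along a section. It holds by compatibility of homogenization with pointwise operations. The rest is bookkeeping already done in the paper.
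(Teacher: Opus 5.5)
Your argument is the paper's: you identify $\calE^\bot$ with the horizontal distribution of the principal connection of $\nabla$, so involutivity is exactly $\rho=0$, after which \eqref{NACM'1} is normality and \eqref{NACM'2}--\eqref{NACM'4} follow, and you have simply written out the bracket computation $[\widetilde{\nabla_X},\widetilde{\nabla_Y}]=\widetilde{\nabla_{[X,Y]}}+\rho(X,Y)\,\calE$ that the paper calls obvious. One minor caveat, inherited from Corollary \ref{cor:ACM'}: since $g_M(\xi,-)=u\,\eta$, the almost contact metric structure in the usual sense is $(\phi,\xi,\eta,u^{-1}g_M)$, not $(\phi,\xi,\eta,g_M)$, but this does not affect your argument because normality involves only $(\phi,\xi,\eta)$.
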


\begin{proof}
The orthogonal distribution to $\calE$ is the principal connection in $\widetilde L$ corresponding to the connection $\nabla$ in $L$, whence the former is involutive if and only if $\nabla$ is flat if and only if $\rho = 0$. The rest is obvious.
\end{proof}

\subsection{Invariant Almost K\"ahler Structures}

Let $(\widetilde K, \widetilde G, \widetilde \Omega)$ be an invariant almost Hermitian structure on $\widetilde L$ and assume that $(\widetilde K, \widetilde G, \widetilde \Omega)$ is actually an almost K\"ahler structure, i.e.~$d \widetilde \Omega = 0$, equivalently $d_{\mathsf DL} \Omega = 0$. Decomposing the latter equation into the parallel and the orthogonal directions to $\bbI$ we get the following

\begin{theorem}\label{theor:AcK}
Let $L \to M$ be a line bundle. The assignment
\[
(\widetilde K, \widetilde G, \widetilde \Omega) \mapsto (\nabla, u, g_M, \phi, \xi, \eta, \alpha, \beta)
\]
establishes a one-to-one correspondence between invariant almost K\"ahler structures on $\widetilde L$ and ACM dataset of type I additionally satisfying the following ``integrability'' conditions:
\begin{align}
d\beta & = 0 \label{AcK2} \\
d \alpha & = \rho \wedge \beta \label{AcK1}
\end{align}  
\end{theorem}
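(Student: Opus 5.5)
By Theorem \ref{theor:inv_ah}, invariant almost Hermitian structures on $\widetilde L$ correspond bijectively to ACM datasets of type I. So it suffices to show that, for such a structure, $d\widetilde\Omega = 0$ holds if and only if \eqref{AcK2} and \eqref{AcK1} hold. Since homogenization intertwines $d_{\mathsf DL}$ with $d$, the condition $d\widetilde \Omega=0$ is equivalent to $d_{\mathsf DL}\Omega = 0$. Here $\Omega \in \Omega^2_{\mathsf DL}$ is a cochain with values in the \emph{trivial} representation $\bbR_M$. Hence $d_{\mathsf DL}$ is given by the Chevalley--Eilenberg formula in which each derivation $D$ acts on functions through its symbol $\sigma_D$.

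The $3$-cochain $d_{\mathsf DL}\Omega$ is determined by its values on triples of generators of $\mathcal D(L)$ of the form $\nabla_X$ and $\bbI$. By skew-symmetry, and since $\mathsf DL$ splits as $\bbR_M\bbI\oplus\nabla(TM)$, only two types of triples need to be checked: $(\bbI,\nabla_X,\nabla_Y)$ and $(\nabla_X,\nabla_Y,\nabla_Z)$ (evaluating on more than one copy of $\bbI$ gives zero). The computation uses the following brackets:
\[
\sigma_{\bbI} = 0, \qquad \sigma_{\nabla_X} = X, \qquad [\bbI, \nabla_X] = 0, \qquad [\nabla_X,\nabla_Y] = \nabla_{[X,Y]} + \rho(X,Y)\,\bbI .
\]
The last identity is the definition of the curvature $\rho$ of $\nabla$, with the sign convention of Remark \ref{rem:induced_connections}. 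The identity $[\bbI,\nabla_X]=0$ holds because $\bbI$ is central in $\calD(L)$. Equivalently, on $\widetilde L$ the Euler field commutes with $0$-homogeneous vector fields.

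\textbf{First triple.} Since $\sigma_{\bbI}=0$ and $[\bbI,\nabla_X]=0$, the formula gives
\[
d_{\mathsf DL}\Omega(\bbI,\nabla_X,\nabla_Y) = X\beta(Y) - Y\beta(X) - \Omega(\bbI,\nabla_{[X,Y]}) - \rho(X,Y)\,\Omega(\bbI,\bbI) = d\beta(X,Y),
\]
because $\Omega(\bbI,\bbI)=0$. So the vanishing of $d_{\mathsf DL}\Omega$ on these triples is exactly \eqref{AcK2}.

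\textbf{Second triple.} Here the cyclic sum of the terms $X\,\Omega(\nabla_Y,\nabla_Z)$ together with the bracket terms $\Omega(\nabla_{[X,Y]},\nabla_Z)$ reproduces $d\alpha(X,Y,Z)$. The curvature parts of the brackets contribute $-\rho(X,Y)\,\Omega(\bbI,\nabla_Z)$ and its cyclic permutations, which sum to $-(\rho\wedge\beta)(X,Y,Z)$. Hence
\[
d_{\mathsf DL}\Omega(\nabla_X,\nabla_Y,\nabla_Z) = \big(d\alpha - \rho\wedge\beta\big)(X,Y,Z),
\]
and its vanishing is exactly \eqref{AcK1}.

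The only delicate point is bookkeeping of signs and normalizations: the sign convention for $\rho$, and the wedge-product normalization in $\rho\wedge\beta$. These must be chosen so that the cyclic sum is exactly $\rho\wedge\beta$ with coefficient $1$, matching the paper's convention $\alpha(X,Y)=\Omega(\nabla_X,\nabla_Y)$. I would check this against the analogous formula \cite[Equation (17)]{BVZ19} used in Theorem \ref{theor:CM}. That formula applies in the $L$-valued case, whereas here the trivial representation removes the $d^\nabla$-twisting and leaves the curvature term.

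Combining the two triples, $d_{\mathsf DL}\Omega=0$ if and only if \eqref{AcK2} and \eqref{AcK1} both hold. Together with Theorem \ref{theor:inv_ah}, this gives the claimed correspondence.
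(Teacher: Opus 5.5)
Your proposal is correct and is essentially the paper's argument: the paper just notes that $d\widetilde\Omega=0$ is equivalent to $d_{\mathsf DL}\Omega=0$, and decomposes the latter along and orthogonally to $\bbI$, i.e.\ evaluates it on $(\bbI,\nabla_X,\nabla_Y)$ and $(\nabla_X,\nabla_Y,\nabla_Z)$, to get $d\beta=0$ and $d\alpha=\rho\wedge\beta$, which is exactly the computation you spell out. One harmless sign slip: the first triple actually gives $d_{\mathsf DL}\Omega(\bbI,\nabla_X,\nabla_Y)=-X\beta(Y)+Y\beta(X)+\beta([X,Y])=-d\beta(X,Y)$, consistent with $\iota_{\calE}d\widetilde\Omega=-\pi^\ast d\beta$ coming from $\calL_{\calE}\widetilde\Omega=0$, and this does not affect the vanishing condition.
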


 We now establish the precise relationship between invariant K\"ahler structures and almost co-K\"ahler structures.
 
 \begin{lemma}
Let $L \to M$ be a line bundle, let $\widetilde G$ be an invariant Riemannian metric on $\widetilde L$, let $G \colon S^2 \mathsf D L \to \bbR_M$ be the associated positive definite, symmetric $2$-form, and let $(\nabla, u, g_M)$ be the corresponding tuple. The following conditions are equivalent:
\begin{enumerate}
\item $u$ is constant,
\item $\calE$ is pre-geodesic,
\item $\calE$ is geodesic.
\end{enumerate} 
\end{lemma}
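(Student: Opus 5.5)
We imitate the proof of the analogous lemma in Section \ref{sec:2}, now using invariance $\calL_\calE \widetilde G = 0$ instead of $\calL_\calE \widetilde G = \widetilde G$. Let $\nabla^{LC}$ be the Levi-Civita connection of $\widetilde G$ and put $\widetilde u = \widetilde G(\calE,\calE)$, which is the homogenization of $u = G(\bbI,\bbI)$, hence a basic ($0$-homogeneous) function on $\widetilde L$, namely $\widetilde u = \pi^\ast u$. The Killing equation $\calL_\calE \widetilde G = 0$ reads
\[
\widetilde G\big(\nabla^{LC}_U \calE, V\big) + \widetilde G\big(U, \nabla^{LC}_V \calE\big) = 0
\]
for all $U, V \in \mathfrak X(\widetilde L)$, by the same chain of equalities as in the earlier proof with the left hand side replaced by $0$.

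We now compute $\nabla^{LC}_\calE \calE$ completely, in both the $\calE$ direction and the orthogonal directions. Taking $V = \calE$ in the Killing equation gives
\[
\widetilde G\big(U, \nabla^{LC}_\calE \calE\big) = -\widetilde G\big(\nabla^{LC}_U \calE, \calE\big) = -\tfrac12 U(\widetilde u)
\]
for every $U$, so that $\nabla^{LC}_\calE \calE = -\tfrac12 \operatorname{grad} \widetilde u$. Since $\widetilde u$ is basic, $\calE(\widetilde u) = 0$, hence $\operatorname{grad}\widetilde u$ is orthogonal to $\calE$. Thus $\nabla^{LC}_\calE \calE$ is automatically orthogonal to $\calE$, with no component along $\calE$.

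The equivalences then follow. The implication (3)$\Rightarrow$(2) is trivial. For (2)$\Rightarrow$(1), suppose $\nabla^{LC}_\calE\calE$ is parallel to $\calE$. Being also orthogonal to $\calE$, it vanishes, so $d\widetilde u = 0$. Then $\widetilde u$ is locally constant, hence so is $u$, and $u$ is constant because $M$ is connected. For (1)$\Rightarrow$(3), if $u$ is constant then $\operatorname{grad}\widetilde u = 0$, so $\nabla^{LC}_\calE\calE = 0$ and $\calE$ is geodesic.

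The only point needing care, and the one I expect to be the main obstacle, is the step showing that $\nabla^{LC}_\calE\calE$ has no $\calE$-component. This is where invariance differs from the homogeneous case. There, $\widetilde G(\calE,\calE)$ had degree $1$ and produced a nonzero tangential term, so only pre-geodesicity was available; here basicness of $\widetilde u$ kills that term, upgrading pre-geodesic to geodesic.
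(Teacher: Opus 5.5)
Your proof is correct and follows essentially the same route as the paper. In both, the Killing equation $\calL_\calE \widetilde G = 0$ gives $\widetilde G(Y, \nabla^{LC}_\calE \calE) = -\tfrac12 Y(\pi^\ast u)$, and the orthogonality $\widetilde G(\nabla^{LC}_\calE\calE, \calE) = 0$ upgrades pre-geodesic to geodesic; you derive that orthogonality from $\calE(\pi^\ast u) = 0$, while the paper reads it off the Killing equation with both arguments equal to $\calE$, which is the same identity.
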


\begin{proof}
Denote by $\pi : \widetilde L \to M$ the projection. Then $\pi^\ast u = \widetilde{G (\bbI, \bbI)} = \widetilde G (\calE, \calE)$. From the invariance condition on $\widetilde G$, we have $\calL_\calE \widetilde G = 0$, i.e.~$\calE$ is a Killing vector field. Hence
\[
\begin{aligned}
Y ( \pi^\ast u)  = Y \big(\widetilde G (\calE, \calE) \big)  = 2 \widetilde G (\nabla^{LC}_Y \calE, \calE) = - 2 \widetilde G (Y, \nabla^{LC}_{\calE}\calE),
\end{aligned}
\]
for all $Y \in \mathfrak X (\widetilde L)$, where we used the Killing equation. This shows that \emph{(1)} if and only if \emph{(2)}. It remains to show that \emph{(2)} $\Rightarrow$ \emph{(3)}. So assume there exists a smooth function $h$ on $\widetilde L$ such that $\nabla^{LC}_\calE \calE = h \calE$. From the Killing equation
\[
0 =  G (\calE, \nabla^{LC}_{\calE}\calE) = h\, \pi^\ast u,
\]
whence $h = 0$.
\end{proof}

\begin{lemma}
Let $L \to M$ be a line bundle, let $\widetilde G$ be an invariant Riemannian metric on $\widetilde L$, let $G \colon S^2 \mathsf D L \to \bbR_M$ be the associated positive definite, symmetric $2$-form, and let $(\nabla, u, g_M)$ be the corresponding tuple. The following conditions are equivalent:
\begin{enumerate}
\item $u$ is constant and $\nabla$ is flat,
\item $\calE$ is parallel.
\end{enumerate} 
\end{lemma}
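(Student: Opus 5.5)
We prove that $\calE$ is parallel if and only if $u$ is constant and $\nabla$ is flat. The idea is to split $\nabla^{LC}\calE$ into its components along $\calE$ and along $\calE^\perp$ and identify each with one of the two conditions. Since $\widetilde G$ is invariant, $\calE$ is a Killing field, so $\nabla^{LC}\calE$ is a $\widetilde G$-skew endomorphism. Hence $\calE$ is parallel if and only if $\widetilde G(\nabla^{LC}_U \calE, V) = 0$ for all $U,V$, and skewness lets us restrict to the pairs $(U,V)$ with $U = \calE$, or $U,V$ both in $\calE^\perp$.

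First take $U = \calE$. By the previous Lemma, $\nabla^{LC}_\calE\calE = 0$ if and only if $u$ is constant. By skewness, $\widetilde G(\nabla^{LC}_V\calE,\calE) = -\widetilde G(V,\nabla^{LC}_\calE\calE)$, so this single condition controls every component involving $\calE$. Thus $u$ constant is equivalent to $\widetilde G(\nabla^{LC}_U\calE,V)=0$ whenever $U$ or $V$ equals $\calE$.

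Next take $U,V$ both orthogonal to $\calE$; locally we may use horizontal lifts $U=\widetilde{\nabla_X}$ and $V=\widetilde{\nabla_Y}$ with $X,Y\in\mathfrak X(M)$. Skewness gives
\[
\widetilde G(\nabla^{LC}_U\calE,V) = \tfrac12\big(\widetilde G(\nabla^{LC}_U\calE,V)-\widetilde G(\nabla^{LC}_V\calE,U)\big) = \tfrac12\, d\calE_\flat(U,V).
\]
Since $U$ and $V$ are orthogonal to $\calE$, the formula for the exterior derivative reduces to $d\calE_\flat(U,V) = -\widetilde G(\calE,[U,V])$. Homogenization turns $[U,V]$ into $[\nabla_X,\nabla_Y] = \nabla_{[X,Y]} + \rho(X,Y)\bbI$, whose $\calE$-component is $\rho(X,Y)\calE$ (up to the paper's sign convention for the curvature). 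Therefore
\[
\widetilde G(\nabla^{LC}_U\calE,V) = -\tfrac12\,\rho(X,Y)\,\pi^\ast u ,
\]
and since $u>0$ this vanishes for all $X,Y$ if and only if $\rho=0$, i.e.\ $\nabla$ is flat. Equivalently, this is the statement from the proof of the earlier Corollary that $\calE^\perp$ is involutive exactly when $\nabla$ is flat.

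Combining the two cases: $\calE$ is parallel if and only if both kinds of components vanish, which is exactly the condition that $u$ is constant and $\nabla$ is flat. The only real work is the horizontal–horizontal computation, namely checking that the $\calE$-component of the bracket of horizontal lifts is the curvature times $\calE$. This comes straight from the definition of curvature together with the compatibility of homogenization with brackets, so I expect no genuine obstacle.
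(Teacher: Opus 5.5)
Your proof is correct and follows essentially the paper's route. The paper likewise uses that, for the Killing field $\calE$, parallelism is equivalent to $d\calE_\flat = 0$ (this is exactly your identity $\widetilde G(\nabla^{LC}_U\calE,V)=\tfrac12\, d\calE_\flat(U,V)$), and then evaluates $d\calE_\flat$ on the pairs $(\calE,\text{horizontal})$ and $(\text{horizontal},\text{horizontal})$, obtaining $-Y(\pi^\ast u)$ and $-\widetilde G(\calE,[X,Y])$. The only differences are cosmetic: you treat the mixed components through the previous lemma and make the curvature term $\rho(X,Y)\,\pi^\ast u$ explicit, where the paper computes these directly.
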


\begin{proof}
Being $\calE$ Killing, condition \emph{(1)} is equivalent to $\calE$ being geodesic and, simultaneously, the orthogonal distribution being involutive. This is in turn equivalent to $\calE$ being parallel. Indeed, for a Killing vector field $\calE$ being parallel is equivalent to $d \calE_\flat = 0$, where $\calE_\flat = \widetilde G_\flat (\calE)$. Now, for all $X, Y \in \mathfrak X (\widetilde L)$,
\[
d \calE_\flat (X,Y) = X\big( \widetilde G (\calE, Y) \big) - Y (\widetilde G(\calE, X) \big) - \widetilde G \big(\calE, [X, Y]\big).
\]
We distinguish two cases
\begin{itemize}
\item[\checkmark] $X = \calE$ and $Y$ is the horizontal lift of a vector field on $M$ via $\nabla$. Then
\begin{equation}\label{eq:a1}
d \calE_\flat (X,Y) = -Y (\pi^\ast u);
\end{equation}
\item[\checkmark] both $X, Y$ are the horizontal lifts of vector fields on $M$ via $\nabla$. Then
\begin{equation}\label{eq:a2}
d \calE_\flat (X,Y) = - \widetilde G \big(\calE, [X, Y]\big).
\end{equation}
\end{itemize}
So, assume Condition \emph{(1)} in the statement holds. Then both the right hand sides of \eqref{eq:a1} and \eqref{eq:a2} vanish whence $\calE$ is parallel. Conversely, if Condition \emph{(1)} in the statement holds then the left hand sides of \eqref{eq:a1} and \eqref{eq:a2} vanish. From the second equation we conclude that $\nabla$ is flat, while from the first one we conclude that $u$ is constant.
\end{proof}

\begin{corollary}
Let $L \to M$ be a line bundle. The assignment
\[
(\widetilde K, \widetilde G, \widetilde \Omega) \mapsto (\nabla \, ;\, u \,;\, \phi, \xi, \eta, u^{-1}g_M)
\]
establishes a one-to-one correspondence between invariant almost K\"ahler structures on $\widetilde L$ such that the Euler vector field is parallel and triples consisting of a flat linear connection $\nabla$ in $L$, a positive function $u$, and an almost co-K\"ahler structure on $M$. 
\end{corollary}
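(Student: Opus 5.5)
The plan is to combine Theorem~\ref{theor:AcK} with the preceding lemma characterizing parallelism of $\calE$, and then translate the resulting equations into the language of almost co-K\"ahler structures after rescaling the metric by $u^{-1}$. Starting from an invariant almost K\"ahler structure $(\widetilde K, \widetilde G, \widetilde \Omega)$, Theorem~\ref{theor:AcK} gives an ACM dataset of type I $(\nabla, u, g_M, \phi, \xi, \eta, \alpha, \beta)$ satisfying \eqref{AcK2} and \eqref{AcK1}. By the last lemma, $\calE$ is parallel if and only if $u$ is constant and $\nabla$ is flat, i.e.~$\rho = 0$. Note that this forces $u$ to be a \emph{positive constant}, not merely a positive function. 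I will read the statement accordingly, and this is the one point I would flag explicitly in the write-up.

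Under these conditions, \eqref{AcK2} together with \eqref{ACM'7} gives $0 = d\beta = d(u\eta) = u\, d\eta$, hence $d\eta = 0$. Likewise \eqref{AcK1} with $\rho = 0$ and \eqref{ACM'8} gives $d\big(g_M(-,\phi-)\big) = 0$. Now set $g := u^{-1} g_M$, a Riemannian metric since $u>0$. From \eqref{ACM'6} we get $g(\xi, -) = \eta$, and from \eqref{ACM'5} $g$ is still $\phi$-skew. The identity
\[
g(\phi -, \phi -) = g - \eta \otimes \eta
\]
then follows exactly as in Remark~\ref{rem:triv_LB_1}, using \eqref{ACM'1}, \eqref{ACM'5} and $g(\xi,-)=\eta$. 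Together with \eqref{ACM'1}--\eqref{ACM'4}, this shows that $(\phi, \xi, \eta, g)$ is an almost contact metric structure.

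Its fundamental $2$-form is $g(-,\phi-) = u^{-1}\alpha$, which is closed because $u$ is constant and $d\alpha = 0$. Combined with $d\eta = 0$, this means $(\phi, \xi, \eta, u^{-1}g_M)$ is almost co-K\"ahler.

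For the converse, start from a flat connection $\nabla$ in $L$, a positive constant $u$, and an almost co-K\"ahler structure $(\phi, \xi, \eta, g)$. Put $g_M := u g$, and define $\beta := u\eta$ and $\alpha := g_M(-,\phi-)$ by \eqref{ACM'7} and \eqref{ACM'8}. Identities \eqref{ACM'1}--\eqref{ACM'6} are immediate from the almost contact metric axioms, so Theorem~\ref{theor:inv_ah} yields an invariant almost Hermitian structure. With $\rho = 0$, the conditions \eqref{AcK2} and \eqref{AcK1} reduce to $d\eta = 0$ and $d\big(g(-,\phi-)\big) = 0$, which hold by the co-K\"ahler assumption, so Theorem~\ref{theor:AcK} upgrades this to an invariant almost K\"ahler structure. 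Finally, the lemma shows $\calE$ is parallel because $u$ is constant and $\nabla$ is flat. The two constructions are mutually inverse since all the data are determined by one another.

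The only delicate step is bookkeeping rather than analysis. One must check that the rescaling by $u^{-1}$ is what makes $\xi$ the $g$-dual of $\eta$, and that the constancy of $u$, which is automatic from parallelism, is used both to pass closedness from $\beta$ to $\eta$ and from $\alpha$ to the fundamental form.
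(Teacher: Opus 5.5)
Your proposal is correct and follows the same route as the paper: use Theorem~\ref{theor:AcK} together with the lemma characterizing parallelism of $\calE$, which gives $u$ constant and $\rho = 0$. You then get $d\eta = 0$ from $d\beta = 0$ and closedness of $u^{-1}\alpha = u^{-1}g_M(-,\phi-)$ from $d\alpha = 0$. The paper only sketches the forward direction, so your converse and the rescaling check fill in what it leaves implicit, and your reading of $u$ as a positive \emph{constant} is the right one (the paper's proof uses this, and its K\"ahler analogue states it explicitly).
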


\begin{proof}
When the Euler vector field is parallel, $u$ is constant and the connection $\nabla$ is flat, i.e.~$\rho = 0$. In this case, $d \alpha = 0$ and, from $d \beta = 0$, we also get $d \eta = 0$, i.e. $(\phi, \xi, \eta, u^{-1}g_M)$ is a co-K\"ahler structure on $M$.
\end{proof}

\subsection{Invariant K\"ahler Structures}

Combining Theorems  \ref{theor:NACM'} and \ref{theor:AcK} we obtain

\begin{theorem}\label{theor:cK}
Let $L \to M$ be a line bundle. The assignment
\[
(\widetilde K, \widetilde G, \widetilde \Omega) \mapsto (\nabla, u, g_M, \phi, \xi, \eta, \alpha, \beta)
\]
establishes a one-to-one correspondence between invariant K\"ahler structures on $\widetilde L$ and ACM dataset of type I additionally satisfying Equations \eqref{NACM'1}--\eqref{NACM'4} and \eqref{AcK1}, \eqref{AcK2} (equivalently Equations \eqref{NACM'1}, \eqref{AcK1}, \eqref{AcK2}).
\end{theorem}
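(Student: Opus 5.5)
The plan is to deduce Theorem \ref{theor:cK} formally from Theorems \ref{theor:NACM'} and \ref{theor:AcK}. An invariant K\"ahler structure is by definition an invariant almost Hermitian structure that is both Hermitian ($N_{\widetilde K}=0$) and almost K\"ahler ($d\widetilde \Omega = 0$). Since both theorems are stated for the same assignment $(\widetilde K, \widetilde G, \widetilde \Omega) \mapsto (\nabla, u, g_M, \phi, \xi, \eta, \alpha, \beta)$, the argument reduces to three steps.

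First, Theorem \ref{theor:inv_ah} gives the bijection between invariant almost Hermitian structures and ACM datasets of type I. Second, Theorem \ref{theor:NACM'} identifies, inside this bijection, the image of the Hermitian ones as the datasets satisfying \eqref{NACM'1}--\eqref{NACM'4}. Third, Theorem \ref{theor:AcK} identifies the image of the almost K\"ahler ones as the datasets satisfying \eqref{AcK2}, \eqref{AcK1}. Because the two conditions $N_K = 0$ and $d_{\mathsf DL}\Omega = 0$ are independent constraints on the same object, the image of their intersection is the intersection of the images. This gives the first form of the statement.

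For the parenthetical equivalence, I would invoke the remark after Theorem \ref{theor:NACM'}, namely that \eqref{NACM'1} implies \eqref{NACM'2}--\eqref{NACM'4}. I would verify it by transcribing Remark \ref{rem:depend_1} verbatim, with the substitutions $d^{can}\eta \leadsto d\eta$ and $d\nu \leadsto \rho$. This is legitimate because that computation used only the algebraic identities \eqref{ACM1}--\eqref{ACM4}, which are exactly \eqref{ACM'1}--\eqref{ACM'4}, and the skew-symmetry of $d\nu$. It never used $d\nu$ being exact, only that it is a $2$-form, so $\rho$ works as well.

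The only point needing care, and the expected (mild) obstacle, is this last transcription. In Remark \ref{rem:depend_1} the identity $\eta([\phi X,\phi Y]) = -d^{can}\eta(\phi X,\phi Y)$ relied on $d^{can}$ being a genuine differential on $\operatorname{sign}(L)$-valued forms. Here it becomes the ordinary formula for $d\eta$, so it holds trivially. I would also check that no step used \eqref{ACM6} or \eqref{ACM'6}, the only relations where $u$ appears. Such a step would be harmless anyway, since the argument there only invoked \eqref{ACM1}--\eqref{ACM4}.
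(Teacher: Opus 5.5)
Your proposal is correct and matches the paper's argument. The paper obtains the theorem by combining Theorems \ref{theor:NACM'} and \ref{theor:AcK} through the common bijection of Theorem \ref{theor:inv_ah}, and gets the parenthetical equivalence from the remark after Theorem \ref{theor:NACM'}: this is Remark \ref{rem:depend_1} with $d^{can}\eta \leadsto d\eta$ and $d\nu \leadsto \rho$, which, as you note, uses only \eqref{ACM'1}--\eqref{ACM'4} and the skew-symmetry of $\rho$.
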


\begin{definition}
An ACM dataset of type I as in Theorem \ref{theor:cK} will be referred to as a \textbf{co-K\"ahler dataset}.
\end{definition}

\begin{corollary}
Let $L \to M$ be a line bundle. The assignment
\[
(\widetilde K, \widetilde G, \widetilde \Omega) \mapsto (\nabla \, ;\, u \,;\, \phi, \xi, \eta, u^{-1}g_M)
\]
establishes a one-to-one correspondence between invariant K\"ahler structures on $\widetilde L$ such that the Euler vector field is parallel and triples consisting of a flat linear connection $\nabla$ in $L$, a constant positive function $u$ and a co-K\"ahler structure on $M$. 
\end{corollary}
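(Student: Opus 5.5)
The plan is to combine the previous corollary on invariant almost K\"ahler structures with the normality statement coming from Theorem \ref{theor:NACM'}. By Theorem \ref{theor:cK}, an invariant K\"ahler structure is the same as an ACM dataset of type I satisfying \eqref{NACM'1}, \eqref{AcK1} and \eqref{AcK2}. By the last two lemmas, the Euler vector field $\calE$ is parallel exactly when $u$ is constant and $\nabla$ is flat, i.e. $\rho = 0$. So I restrict to datasets with $\rho = 0$ and $u$ a positive constant, and translate the remaining conditions into conditions on $(\phi, \xi, \eta, u^{-1} g_M)$.

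\emph{Direct implication.} With $\rho = 0$, condition \eqref{NACM'1} reduces to $N_\phi + \xi \otimes d\eta = 0$, which is the normality of the almost contact structure $(\phi, \xi, \eta)$. Rescaling the metric by the positive constant $u^{-1}$ changes neither $\phi$, $\xi$, $\eta$ nor normality. It turns \eqref{ACM'6} into $u^{-1} g_M(\xi, -) = \eta$, so $(\phi, \xi, \eta, u^{-1}g_M)$ is an almost contact metric structure (as in Corollary \ref{cor:ACM'}). Conditions \eqref{AcK2} and \eqref{AcK1}, with $\beta = u\eta$, $u$ constant and $\rho = 0$, give $d\eta = 0$ and $d\alpha = 0$. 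Here $\alpha = g_M(-, \phi -) = u \cdot (u^{-1} g_M)(-, \phi -)$, so the fundamental $2$-form of the rescaled structure is closed as well. Hence the structure is almost co-K\"ahler and normal, i.e.\ co-K\"ahler.

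\emph{Converse.} Take a flat $\nabla$, a constant $u > 0$ and a co-K\"ahler structure $(\phi, \xi, \eta, g)$. Set $g_M = u g$, $\beta = u\eta$ and $\alpha = g_M(-, \phi -)$. Then \eqref{ACM'1}--\eqref{ACM'8} hold, so Corollary \ref{cor:ACM'} produces an invariant almost Hermitian structure. Since $\rho = 0$ and $d\eta = 0$, $d\Phi = 0$ and $N_\phi = 0$, conditions \eqref{AcK2}, \eqref{AcK1} and \eqref{NACM'1} hold, and the structure is K\"ahler by Theorem \ref{theor:cK}. Its $\calE$ is parallel by the second lemma.

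\emph{Bijectivity.} This follows because both assignments are restrictions of the bijection in Theorem \ref{theor:inv_ah}, with $\alpha$ and $\beta$ determined by the other data. The only point needing care is normality. For co-K\"ahler structures, normality is $N_\phi + \xi \otimes d\eta = 0$, which reduces to $N_\phi = 0$ once $d\eta = 0$. I must check that this matches \eqref{NACM'1} at $\rho = 0$, and that the remaining conditions \eqref{NACM'2}--\eqref{NACM'4} follow automatically, which the remark after Theorem \ref{theor:NACM'} guarantees. I expect no real obstacle beyond this bookkeeping.
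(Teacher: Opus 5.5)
Your proposal is correct and follows essentially the same route as the paper. The paper gives no separate proof of this corollary; it implicitly combines the argument for the preceding almost co-K\"ahler corollary (parallel $\calE$ means $u$ is constant and $\rho = 0$, so \eqref{AcK2} and \eqref{AcK1} give $d\eta = 0$ and $d\alpha = 0$) with the specialization of \eqref{NACM'1} at $\rho = 0$, which is normality. Your explicit treatment of the converse and of the rescaling $g_M \mapsto u^{-1} g_M$ simply fills in bookkeeping the paper leaves to the reader.
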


\begin{remark}
Let $L$ be a line bundle, and let $(\widetilde K, \widetilde G, \widetilde \Omega)$ be an invariant K\"ahler structure on $\widetilde L$. Let $u$ be constant, and $\rho = 0$. Then $u$ can be actually set to $1$ without great loss of generality. It is enough to redefine
\[
\begin{aligned}
\widetilde G & \longrightarrow \widetilde G (\calE, \calE)^{-1} \, \widetilde G, \quad \text{(whence $u \longrightarrow 1$)}\\
\widetilde \Omega & \longrightarrow \widetilde G (\calE, \calE)^{-1} \, \widetilde \Omega,
\end{aligned}
\]
which amounts to redefine
\[
\begin{aligned}
g_M & \longrightarrow u^{-1} g_M, \\
\alpha & \longrightarrow u^{-1}\alpha, \\
\beta & \longrightarrow u^{-1}\beta.
\end{aligned}
\]
All the integrability conditions are obviously preserved so the new $(\widetilde K, \widetilde G, \widetilde \Omega)$ is still an invariant K\"ahler structure and the new $(\phi, \xi, \eta, g_M)$ is a co-K\"ahler structure.
\end{remark}

\subsection{Two Examples}

Similarly as in Subsection \ref{subsec:example}, here we provide two example of an invariant K\"ahler structure where ($L$ is trivial but) the connection $\nabla$ is not flat, so that its curvature $\rho$ enters the ``integrabilities'' in a non-trivial way. In the first example $u$ is not constant, while in the second example $u$ is actually constant. This will provide even locally a genuine  generalization of co-K\"ahler manifolds. So, let $L, \widetilde L, s, M_s, \pi_s, \psi_s$ be as in Subsection \ref{subsec:example}. An invariant almost Hermitian structure $(\widetilde G, \widetilde K, \widetilde \Omega)$ on $\widetilde L$ determines an ACM dataset of type I $(\nabla, u, g_M, \nu, \phi, \xi, \eta, \beta)$ on $M$ and, using $\pi_s$ and $\psi_s$, we can pull-back the latter to an ACM dataset of type I $(\nabla^s, u_s, g_{M_s}, \nu_s, \phi_s, \xi_s, \eta_s, \alpha_s, \beta_s)$ on $M_s$, where $\nabla^s$ is now a connection in the trivial line bundle $\bbR_{M_s} \to M_s$ and the rest are ordinary tensors on $M_s$. We also denote by $\rho_s$ the curvature of $\nabla^s$. Arguing in a very similar way as in Subsection \ref{subsec:example} we find
\begin{equation}\label{eq:u_s}
u_s = \widetilde u |_{M_s}, \quad \text{where} \quad \widetilde u =  \widetilde G(\calE, \calE).
\end{equation}
Moreover,
\begin{equation}
\nabla^s = \nabla^{can} + \nu_s, \quad \text{and} \quad \rho_s =  d\nu_s,
\end{equation}
where $\nabla^{can}$ is the canonical flat connection in $\bbR_{M_s}$ and $\nu_s \in \Omega^1 (M_s)$ is the $1$-form given by
\begin{equation}
\nu_s = i_{M_s}^\ast \left( \widetilde u{}^{-1} \calE_\flat \right).
\end{equation}
Additionally
 \begin{align}
g_{M_s} &=  i^\ast_{M_s }\Big( \widetilde G - \widetilde u^{-1}  \calE_\flat \otimes  \calE_\flat \Big)\label{eq:g_M_s'}, \\
 \xi_s & = - \operatorname{pr}_{TM_s}  \widetilde K  \calE |_{M_s}, \\
   \phi_s & = \operatorname{pr}_{TM_s}  \widetilde K |_{TM_s}, \\
    \eta_s &= i^\ast_{M_s} \Big( \widetilde u{}^{-1}  \, \iota_\calE \widetilde \Omega \Big)\label{eq:eta_s'}, \\
    \beta_s & = u_s \,  \eta_s \\
    \alpha_s &= u_s \, g_{M_s}(- ,\phi_s -) \label{eq:beta_s'}.
\end{align}

To find a concrete example let $(P, \widetilde G, \widetilde K, \widetilde \Omega)$ be a K\"ahler manifold, let $\calE \in \mathfrak X (P)$ be a vector field such that
\begin{equation}\label{eq:inf_inv}
\calL_\calE \widetilde G = 0, \quad \calL_\calE \widetilde K = 0, \quad \text{whence} \quad \calL_\calE \widetilde \Omega = 0,
\end{equation}
and let $\Sigma \subseteq P$ be a hypersurface everywhere transverse to $\calE$. In this situation, locally around $\Sigma$, we can identify $P$ with a principal $\mathbb R^\times$-bundle such that 1) $\calE$ is the Euler vector field on $P$, 2) $\Sigma = M_s$ is the image of a section $s$ of $P$, and 3) $(\widetilde G, \widetilde K, \widetilde \Omega)$ is an invariant K\"ahler structure on $P$. It follows that $\Sigma$ inherits a co-K\"ahler dataset $(\nabla^s, u_s, g_{M_s}, \phi_s, \xi_s, \eta_s, \beta_s)$ uniquely determined by $\calE$ and $(\widetilde G, \widetilde K, \widetilde \Omega)$ via Formulas \eqref{eq:u_s}--\eqref{eq:beta_s'}.

\begin{example}
It is easy to produce an example where $u_s$ is not constant, nor is $\nabla^s$ flat. For instance, on $P = \mathbb R^4 \smallsetminus 0$ with standard coordinates $(x_1, y_1, x_2, y_2)$ consider the standard K\"ahler structure $(\widetilde G, \widetilde K, \widetilde \Omega)$. Consider also the vector field
\[
\calE = \frac{\partial}{\partial y_2},
\]
and the coordinate hyperplane $\Sigma : y_2 = 0$. Equations \eqref{eq:inf_inv} are satisfied. Moreover $u_s = 1$ and
\[
\nu_s = - i^\ast_{\Sigma} \calE_\flat =  - i^\ast_{\Sigma} (dy_2) = 0 \quad \Rightarrow \quad \rho_s = 0.
\]
It follows that $(g_{M_s}, \phi_s, \xi_s, \beta_s, \alpha_s)$ is an ordinary co-K\"ahler structure  on $\Sigma = \bbR^3$. Deforming $\calE$, we can produce a new example with non constant $u_s$ and non-zero $\rho_s$. Namely, consider again the unitary vector field $U$ from Example \ref{exmpl1}. Redefining $\calE \to \calE + U$ preserves Equations \eqref{eq:inf_inv}. However, in this case
\[
u_s = 1 - x_2 + x_1^2 + x_2^2 + y_1^2
\]
while the orthogonal complement to $\calE$ is clearly not integrable anymore. We leave the simple details to the reader.
\end{example} 

\begin{example}
We conclude with an example where $u_s$ is constant but $\rho_s \neq 0$. In $\bbR^4$ with standard coordinates $(x_1, y_1, x_2, y_2)$ consider the open submanifold $P : 2x_1 > x_2^2 + y_2^2$. On $P$ consider the metric
\[
\widetilde G = dx_1^2 + dy_1^2 + 2x_1 (dx_2^2 + dy_2^2) + 2 (x_2 dx_2 + y_2 dy_2)dx_1 + 2(x_2dy_2 - y_2 dx_2)dy_1,
\]
the standard complex structure $\widetilde J$, and the symplectic form
\[
\widetilde \Omega = - dx_1 \wedge dy_1 - 2x_1 dx_2 \wedge dy_2 + y_2 dx_1 \wedge dx_2 - x_2 dx_1 \wedge dy_2 +x_2dy_1 \wedge dx_2 + y_2 dy_1 \wedge dy_2.
\]
A direct computation shows that $(\widetilde G, \widetilde K, \widetilde \Omega)$ is a K\"ahler structure on $P$. Consider also the vector field
\[
\calE = \frac{\partial}{\partial y_1},
\]
and the hypersurface $\Sigma \subseteq P$ given by $\Sigma : y_1 = 0$. Equations \eqref{eq:inf_inv} are satisfied. Moreover $u_s = 1$ but the orthogonal complement to $\calE$ is the kernel of the $1$-form $\theta = dy_1 - y_2 dx_2 + x_2 dy_2$ so that $\theta \wedge d\theta = 2 dy_1 \wedge dx_2 \wedge dy_2 \neq 0$, whence $\calE^\bot$ in non-integrable. In fact
\[
\rho_s = 4 dx_2 \wedge dy_2 \neq 0.
\]
\end{example}

\appendix

\section{Case C}\label{appendix}
 In this appendix we state without proof an analogue of Theorems \ref{theor:inv_ah}, \ref{theor:NACM'}, \ref{theor:AcK}, and \ref{theor:cK} in the very similar CASE C of Subsection \ref{subsec:AHSonHM}. So, let $L \to M$ be a line bundle, and let $(\widetilde K, \widetilde G, \widetilde \Omega)$ be an almost contact metric structure on $\widetilde L$ satisfying the homogeneity properties of CASE C. Then $\widetilde K, \widetilde G, \widetilde \Omega$ are the homogenization of a vector bundle map $K : \mathsf DL \to \operatorname{sign} (L) \otimes \mathsf D L$ satisfying $K^2 = -1$, a positive definite, symmetric $2$-form $G : S^2 \mathsf DL \to \bbR_M$, and a non-degenerate, alternating $2$-form $\Omega : \wedge^2 \mathsf DL \to L$ such that $\Omega = G (-, K-)$. In their turn, $(G, K, \Omega)$ determine a tuple $(\nabla, u, g_M, \phi, \xi, \eta, \alpha, \beta)$ where
 \begin{itemize}
 \item[] $\nabla$ is a linear connection in $L$,
 \item[] $u$ is a positive smooth function on $M$,
 \item[] $g_M$ is a Riemannian metric on $M$,
 \item[] $\phi$ is a vector bundle map $TM \to \operatorname{sign}(L) \otimes TM$,
 \item[] $\xi$ is a $\operatorname{sign}(L)$-valued vector field on $M$,
 \item[] $\eta, \beta$ are both $\operatorname{sign} (L)$-valued $1$-forms on $M$,
 \item[] $\alpha$ is a $\operatorname{sign} (L)$-valued $2$-form on $M$,
 \end{itemize}
 satisfying Equations \eqref{ACM'1}--\eqref{ACM'8}. The relationship between $(\nabla, u, g_M, \phi, \xi, \eta, \alpha, \beta)$ and $(G, K, \Omega)$ is formally identical to the relationship between $(G, K, \Omega)$ and its associated ACM dataset of type I in the invariant case (Subsection \ref{subsec:IAHS}). We call $(\nabla, u, g_M, \phi, \xi, \eta, \alpha, \beta)$ an \textbf{ACM dataset of type C} on $L$, and denote by $\rho \in \Omega^2 (M)$ the curvature of $\nabla$.
 
 \begin{theorem}
 The assignment
 \[
 (\widetilde K, \widetilde G, \widetilde \Omega) \mapsto (\nabla, u, g_M, \phi, \xi, \eta, \alpha, \beta)
 \]
 establishes a one-to-one correspondence between almost Hermitian structures on $\widetilde L$ with the homogeneity properties of CASE C and ACM datasets of type C on $L$. Moreover
 \begin{enumerate}
 \item $(\widetilde K, \widetilde G, \widetilde \Omega)$ is an Hermitian structure if and only if $(\phi, \xi, \eta)$ satisfy Equations \eqref{NACM'1}--\eqref{NACM'4} (equivalently \eqref{NACM'1});
 \item $(\widetilde K, \widetilde G, \widetilde \Omega)$ is an almost K\"ahler structure if and only if $(\alpha, \beta)$ satisfy Equations \eqref{AcK2}--\eqref{AcK1} (where, however, the de Rham differential $d$ must be replaced by the connection differential $d^{can}$ associated to the canonical flat connection $\nabla^{can}$ in $\operatorname{sign}(L)$);
 \item $(\widetilde K, \widetilde G, \widetilde \Omega)$ is a K\"ahler structure if and only if $(\phi, \xi, \eta, \alpha, \beta)$ satisfy both the conditions in (1) and (2).
 \end{enumerate}
 \end{theorem}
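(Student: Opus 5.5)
The plan is to mirror Subsections \ref{subsec:IAHS}--\ref{subsec:HK}, now with sections twisted by $\operatorname{sign}(L)$ where the homogeneity of CASE C requires it. First, $(\widetilde K,\widetilde G,\widetilde\Omega)$ is equivalent to $(K,G,\Omega)$ with $K\colon \mathsf DL\to\operatorname{sign}(L)\otimes\mathsf DL$, $G\colon S^2\mathsf DL\to\bbR_M$ and $\Omega\colon\wedge^2\mathsf DL\to\operatorname{sign}(L)$. This follows from the homogenization bijection of Section \ref{sec:1}, using $\operatorname{sign}\cdot 1=\operatorname{sign}$.

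Since $G$ takes values in $\bbR_M$, exactly as in the invariant case it is equivalent to $(\nabla,u,g_M)$: $\nabla(TM)$ is the orthogonal complement of $\bbI$, $u=G(\bbI,\bbI)$, and $g_M=G(\nabla_-,\nabla_-)$. Splitting $\mathsf DL\cong TM\oplus\bbR_M$ via $\nabla$, we write
\[
K\nabla_X=\nabla_{\phi X}+\eta(X)\bbI,\qquad K\bbI=-\nabla_\xi+\gamma\bbI,
\]
where $\phi,\xi,\eta,\gamma$ are now $\operatorname{sign}(L)$-valued. Expanding $K^2=-1$ with $\operatorname{sign}(L)^{\otimes2}=\bbR_M$ gives the same four algebraic identities. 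The requirement that $G(-,K-)$ be alternating gives $g_M(\xi,-)=u\,\eta$, skewness of $g_M(-,\phi-)$, and $\gamma=0$. Setting $\beta=\Omega(\bbI,\nabla_-)$ and $\alpha=\Omega(\nabla_-,\nabla_-)$ (both $\operatorname{sign}(L)$-valued, with no $d^\nabla\beta$ correction because the coefficients are not $L$) yields \eqref{ACM'7}, \eqref{ACM'8}. The computations are verbatim those of Subsection \ref{subsec:IAHS}, so this proves the bijection.

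For (1), expand $N_K(D_1,D_2)$ on the two cases $(\nabla_{X_1},\nabla_{X_2})$ and $(\bbI,\nabla_X)$, as in the previous sections.
\begin{itemize}
\item The curvature of $\nabla$ enters through $[\nabla_{X_1},\nabla_{X_2}]=\nabla_{[X_1,X_2]}+\rho(X_1,X_2)\bbI$.
\item Derivatives of $\operatorname{sign}(L)$-valued tensors along $\nabla_X$ are taken with the induced connection on $\operatorname{sign}(L)$, which by Remark \ref{rem:L-conn} is $\nabla^{can}$. Hence $d\eta$, $N_\phi$ and $\calL_\xi$ are understood via $d^{can}$.
\item The bracket $[\bbI,\nabla_X]=0$, and $\bbI$ acts on $\operatorname{sign}(L)$ trivially (since $\dot f=0$). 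So the $(\bbI,\nabla_X)$ component reproduces \eqref{NACM'3}--\eqref{NACM'4}.
\end{itemize}
The redundancy argument of Remark \ref{rem:depend_1} is purely pointwise-algebraic plus Cartan calculus, so it carries over verbatim with $d^{can}$ in place of $d$.

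For (2), compute $d_{\mathsf DL}\Omega$ with coefficients in $\operatorname{sign}(L)$, on which $\mathsf DL$ acts through $\sigma$ followed by $\nabla^{can}$. On $(\bbI,\nabla_X,\nabla_Y)$ we get:
\begin{itemize}
\item $\bbI$ acts trivially on the coefficients and brackets trivially with horizontal lifts;
\item hence only $\nabla_X(\beta(Y))$-type terms survive, giving $d^{can}\beta=0$.
\end{itemize}
On three horizontal lifts we get $d^{can}\alpha-\rho\wedge\beta=0$, where the $\rho$ term comes from the vertical component $\rho(X,Y)\bbI$ of the brackets paired with $\beta$. Finally, (3) is just the conjunction of (1) and (2).

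The main obstacle is bookkeeping rather than ideas. One must check carefully that the action of $\mathsf DL$ on $\operatorname{sign}(L)$ really factors through $\nabla^{can}$ and that $\bbI$ acts by zero on it; this is what distinguishes CASE C from the homogeneous case, where $\bbI$ acts by $1$ on $L$ and produces the correction terms. I would verify this first via homogenization: the Euler field $\calE$ kills $\operatorname{sign}$-homogeneous functions. With that in hand, every formula of Section \ref{sec:3} transfers with $d\rightsquigarrow d^{can}$.
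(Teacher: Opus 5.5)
Your proposal is correct and matches what the paper intends: the appendix states this theorem without proof, as a formal analogue of the invariant case, and your bookkeeping supplies exactly the omitted details. In particular, you rightly take $\Omega$ to be $\operatorname{sign}(L)$-valued, as in CASE C of Subsection \ref{subsec:AHSonHM} (the appendix's ``$\Omega\colon\wedge^2\mathsf DL\to L$'' is a slip), use that $\bbI$ acts trivially on $\operatorname{sign}(L)$, and read every $d$ in \eqref{NACM'1}--\eqref{NACM'4} and \eqref{AcK2}--\eqref{AcK1} as $d^{can}$.
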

 Notice that when $L$ is a trivial line bundle, then the invariant case and CASE C are identical.

\end{document}